\documentclass[11pt, reqno]{amsart}
\pdfoutput=1

\usepackage[top=4cm,bottom=4cm,left=3.5cm,right=3.5cm]{geometry}

\usepackage{amssymb,amsmath,amsthm}
\usepackage[UKenglish]{babel}
\usepackage{enumitem}
\setlist{itemsep=0pt, topsep=0.5em}
\setlist[description]{leftmargin=*, itemsep = 5pt}
\usepackage{tikz-cd}
\usepackage{mathtools}
\usepackage{graphicx}
\usepackage{mathrsfs}
\usepackage{stmaryrd}
\usepackage[scr=euler]{mathalfa}
\usepackage[hyphens]{url}
\usepackage[breaklinks=true, hidelinks]{hyperref}

\providecommand{\noopsort}[1]{}


\theoremstyle{plain}
\newtheorem{theorem}{Theorem}[section]
\newtheorem{lemma}[theorem]{Lemma}
\newtheorem{proposition}[theorem]{Proposition}
\newtheorem{corollary}[theorem]{Corollary}

\newtheorem*{claim*}{Claim}

\newtheorem*{fact*}{Fact} 

\theoremstyle{definition}
\newtheorem{definition}[theorem]{Definition}
\newtheorem{remark}[theorem]{Remark}

\newtheorem{example}[theorem]{Example}

\newcommand{\Fraisse}{Fra\"{i}ss\'{e}}
\renewcommand{\epsilon}{\varepsilon}
\renewcommand{\theta}{\vartheta}
\renewcommand{\phi}{\varphi}
\newcommand{\cf}{cf.~}
\newcommand{\Cf}{Cf.~}
\newcommand{\eg}{e.g.~}
\newcommand{\ie}{i.e.~}
\newcommand{\iec}{i.e.,~}
\newcommand{\Iec}{I.e.,~}

\newcommand{\down}{{\downarrow}} 
\newcommand{\cvr}{\prec} 
\DeclareMathOperator{\htf}{\mathrm{ht}} 
\newcommand{\pit}{\pitchfork}
\newcommand{\br}[1]{\llbracket #1 \rrbracket} 
\renewcommand{\tilde}{\widetilde}
\renewcommand{\o}{\overline}
\DeclareMathOperator{\dom}{dom} 
\DeclareMathOperator{\cod}{cod}
\newcommand{\B}{\mathcal{B}} 
\newcommand{\U}{\mathcal{U}} 
\newcommand{\Sg}[1]{\mathscr{S}_{#1}} 
\newcommand{\co}[1]{#1_{\star}} 
\newcommand{\inc}[1]{\mathfrak{i}_{#1}} 
\newcommand{\sg}{\sigma}
\newcommand{\As}{A} 
\newcommand{\Bs}{B} 
\newcommand{\Cs}{C} 
\newcommand{\Ds}{D} 
\newcommand{\g}{\mathfrak{g}}
\DeclareMathOperator{\core}{core}

\newcommand{\Ek}{\mathbb{E}_k} 
\newcommand{\Mk}{\mathbb{M}_k} 
\DeclareMathOperator{\Emb}{\mathbb{S}} 
\DeclareMathOperator{\Path}{\mathbb{P}} 
\newcommand{\colim}{\operatornamewithlimits{colim}}
\newcommand{\LE}{L^{E}} 
\newcommand{\RE}{R^{E}} 

\DeclareMathOperator{\Set}{\mathbf{Set}} 
\DeclareMathOperator{\C}{\mathscr{C}} 
\DeclareMathOperator{\A}{\mathscr{A}} 
\DeclareMathOperator{\F}{\mathscr{F}} 
\DeclareMathOperator{\T}{\mathscr{T}} 
\DeclareMathOperator{\E}{\mathscr{E}} 
\newcommand{\CSstar}{\mathbf{Struct}_{\bullet}(\sg)} 
\newcommand{\CSplus}{\mathbf{Struct}(\sg^I)} 
\DeclareMathOperator{\D}{\mathscr{D}} 
\DeclareMathOperator{\Mod}{\mathbf{Mod}} 
\newcommand{\CS}{\mathbf{Struct}(\sg)}
\newcommand{\R}{\mathscr{R}} 
\newcommand{\RT}{\R^{E}} 
\newcommand{\RTk}{\R^{E}_{k}} 
\newcommand{\RM}{\R^{M}} 
\newcommand{\RMk}{\R^{M}_{k}} 
\newcommand{\Cp}{\C_{p}} 

\newcommand{\LL}{\mathcal{L}} 
\newcommand{\FO}{\mathrm{FO}} 
\newcommand{\EFO}{\exists^+\mathrm{FO}} 
\newcommand{\ML}{\mathrm{ML}} 
\newcommand{\prCk}{\rightarrow^{\C}_k} 
\newcommand{\eqaCk}{\rightleftarrows_{k}^{\C}} 
\newcommand{\eqbCk}{\leftrightarrow_{k}^{\C}} 
\newcommand{\eqcCk}{\cong_{k}^{\C}} 

\newcommand{\emb}{\rightarrowtail} 
\newcommand{\epi}{\twoheadrightarrow} 
\newcommand{\rl}{\rightleftarrows}
\newcommand{\id}{\mathrm{id}} 
\newcommand{\Q}{\mathscr{Q}} 
\newcommand{\M}{\mathscr{M}} 

\tikzcdset{
    relay arrow/.default = 10pt,
    relay arrow/.style = {
        rounded corners,
        to path = {
            \pgfextra{
                \def\sourcecoordinate{\pgfpointanchor{\tikztostart}{center}}
                \def\targetcoordinate{\pgfpointanchor{\tikztotarget}{center}}
                \pgfmathanglebetweenpoints{\sourcecoordinate}{\targetcoordinate}
                \edef\tempangle{\pgfmathresult}
                \pgftransformrotate{\tempangle}
                \pgfmathifthenelse{#1>0}{\tempangle+90}{\tempangle-90}
                \pgfcoordinate{tempcoord}{\pgfpointanchor{\tikztostart}{\pgfmathresult}}
            }
            (tempcoord)
            -- ([yshift=#1]tempcoord)
            -- ([yshift=#1]tempcoord-|\tikztotarget.center)\tikztonodes
            --(\tikztotarget)
        }
    }
}

\begin{document}
\title[\tiny{Arboreal Categories and Equi-resource Homomorphism Preservation Theorems}]{Arboreal Categories and Equi-resource Homomorphism Preservation Theorems}

\author{Samson Abramsky}
\address{Department of Computer Science, University College London, 66--72 Gower Street, London, WC1E 6EA, United Kingdom}\email{s.abramsky@ucl.ac.uk}

\author{Luca Reggio}
\address{Department of Computer Science, University College London, 66--72 Gower Street, London, WC1E 6EA, United Kingdom}
\email{l.reggio@ucl.ac.uk}

\thanks{Research supported by the EPSRC grant EP/V040944/1, and by the European Union's Horizon 2020 research and innovation programme under the Marie Sk{\l}odowska-Curie grant agreement No 837724.}

\begin{abstract}
The classical homomorphism preservation theorem, due to {\L}o{\'s}, Lyndon and Tarski, states that a first-order sentence $\phi$ is preserved under  homomorphisms between structures if, and only if, it is equivalent to an existential positive sentence~$\psi$.
Given a notion of (syntactic) complexity of sentences, an ``equi-resource'' homomorphism preservation theorem improves on the classical result by ensuring that $\psi$ can be chosen so that its complexity does not exceed that of $\phi$.

We describe an axiomatic approach to equi-resource homomorphism preservation theorems based on the notion of arboreal category. This framework is then employed to establish novel homomorphism preservation results, and improve on known ones, for various logic fragments, including first-order, guarded and modal logics.
\end{abstract}

\maketitle

\section{Introduction}
%

\subsection{Summary}
In this paper, we take steps towards an \emph{axiomatic model theory} which is resource-sensitive, and well adapted for finite and algorithmic model theory and descriptive complexity. 
This builds on previous work on game comonads \cite{abramsky2017pebbling,DBLP:conf/csl/AbramskyS18,conghaile2021game,Guarded2021,DJR2021,Hybrid2022,FVM2022}, which describe model comparison games in terms of resource-indexed comonads on the category of relational structures. This allows the syntax-free description of a wide range of logics, indexed by resource parameters such as quantifier rank and number of variables. As explained in \cite{emerging2022}, this opens up a landscape in which properties of the comonad are related to the degree of tractability of the logic.

Game comonads and the associated (comonadic) adjunctions have been axiomatised at a general categorical level in terms of \emph{arboreal categories} \cite{AR2021icalp,AR2022}, in which the objects have an intrinsic tree structure. 
This structure can be used to define notions of bisimulation, game, etc.\ at an abstract level, and to transfer these process notions via an adjunction to an \emph{extensional category}---the main concrete example in applications being the category of relational structures.

In this paper, we show that this axiomatic framework can be used to prove a substantial resource-sensitive model-theoretic result, Rossman's equirank homomorphism preservation theorem, as well as a number of extensions and variations, \eg to modal and guarded logics, and relativisations to a wide range of subclasses of structures.

Some striking features of this axiomatic approach are its generality, its resource sensitivity, and the fact that it does not use compactness.
It is interesting to compare this with the use of accessible categories as an axiomatic framework for the study of abstract elementary classes \cite{beke2012abstract}. This is aimed at extending first-order model theory into the infinite, replacing compactness by $\lambda$-accessibility. By contrast, motivated by the needs of finite model theory and descriptive complexity, we are interested in capturing fine structure ``down below'', in fragments of first-order logic.\footnote{Monadic second-order logic is also of interest and within scope, see \cite{FVM2022}.}

\subsection{Extended discussion}
Homomorphism preservation theorems relate the syntactic shape of a sentence with the semantic property of being preserved under homomorphisms between structures. 
Recall that a first-order sentence $\phi$ in a vocabulary $\tau$ is said to be \emph{preserved under homomorphisms} if, whenever there is a homomorphism of $\tau$-structures $\As\to \Bs$ and ${\As\vDash \phi}$, then also $\Bs\vDash\phi$.
Further, an \emph{existential positive sentence} is a first-order sentence that uses only the connectives $\vee,\wedge$ and the quantifier $\exists$.
The following classical result, known as the \emph{homomorphism preservation theorem}, is due to {\L}o{\'s}, Lyndon and Tarski \cite{Los1955, Lyndon1959, Tarski1955} and applies to arbitrary (first-order) vocabularies.
\begin{theorem}\label{th:HPT}
A first-order sentence is preserved under homomorphisms if, and only if, it is equivalent to an existential positive sentence.
\end{theorem}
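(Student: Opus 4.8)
The plan is to run the classical compactness proof. For the backward direction I would argue by induction on the construction of existential positive formulas that they are preserved under homomorphisms: atomic formulas are preserved by the very definition of a homomorphism (using induction on terms when the vocabulary has function symbols), the conjunction and disjunction of preserved formulas are preserved, and if $\psi(\bar x, y)$ is preserved then so is $\exists y\,\psi(\bar x, y)$, since a homomorphism carries a witness in the domain to a witness in the codomain. The substance of the theorem is the forward direction, so assume $\phi$ is preserved under homomorphisms and let $\Psi$ be the set of all existential positive sentences $\psi$ over the same vocabulary with $\phi \vDash \psi$. A finite conjunction of members of $\Psi$ is existential positive and is implied by $\phi$; hence, by compactness, it is enough to prove $\Psi \vDash \phi$, for then some finite $\Psi_0 \subseteq \Psi$ already entails $\phi$ and $\bigwedge \Psi_0$ is the required equivalent sentence.

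So suppose $\As \vDash \Psi$; I must show $\As \vDash \phi$. The idea is to manufacture a model $\Bs$ of $\phi$ together with a homomorphism from $\Bs$ into an \emph{elementary} extension $\As'$ of $\As$. Given such data, $\As' \vDash \phi$ because $\phi$ is preserved under homomorphisms, and hence $\As \vDash \phi$ because $\As \preceq \As'$ and $\phi$ is first order. To build $\Bs$, I would look at the theory $\{\phi\} \cup \{\neg\psi : \psi \text{ existential positive}, \ \As \nvDash \psi\}$: if a finite subset of it were inconsistent, then $\phi$ would entail a disjunction $\psi_1 \vee \dots \vee \psi_n$ of existential positive sentences all false in $\As$, but such a disjunction is itself existential positive, hence lies in $\Psi$, hence true in $\As$ --- a contradiction. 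By compactness this theory has a model $\Bs$, which satisfies $\phi$ and has the key property that every existential positive sentence true in $\Bs$ is true in $\As$.

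To produce the homomorphism, I would expand the vocabulary by a fresh constant $d_b$ for each $b \in \Bs$ and a fresh constant $e_a$ for each $a \in \As$, and consider the union of the elementary diagram of $\As$ (in the constants $e_a$) with the positive atomic diagram of $\Bs$, i.e. the set of atomic sentences true in $\Bs$ written in the constants $d_b$. A finite subset of this union is implied by the conjunction of a single sentence $\chi(\bar e)$ true in $\As$ and a single conjunction of atomic formulas $\theta(\bar d)$ true in $\Bs$; then $\Bs \vDash \exists\bar x\,\theta(\bar x)$, an existential positive sentence, so by the key property $\As \vDash \exists\bar x\,\theta(\bar x)$, and interpreting each $e_a$ as $a$ and the relevant $d_b$ as witnesses in $\As$ satisfies the finite subset. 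By compactness the union has a model $M$; its reduct $\As'$ to the original vocabulary is an elementary extension of $\As$ (as $M$ models the elementary diagram of $\As$), and $b \mapsto d_b^{M}$ is a homomorphism $\Bs \to \As'$ (as $M$ models the positive atomic diagram of $\Bs$). By the remarks above, $\As \vDash \phi$, finishing the proof.

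I expect the delicate point to be the final construction. One cannot simply take $\{\phi\}$ together with the positive diagram of $\As$ itself: that produces homomorphisms \emph{out of} $\As$, which is useless, since preservation under homomorphisms only propagates $\phi$ along the arrow. It is essential that $\As$ be the codomain of the homomorphism, and the price for this is the passage to an elementary extension --- the elementary diagram of $\As$ is what guarantees that $\As'$ is elementary over $\As$, so that the first-order sentence $\phi$ can be transported back down to $\As$. The rest is bookkeeping around two invocations of compactness, one relying on closure of existential positive sentences under disjunction (to get $\Bs$) and one on closure under conjunction (in the reduction and again inside the diagram argument).
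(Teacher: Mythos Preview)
Your argument is correct and is precisely the standard compactness proof the paper has in mind: the paper does not give its own proof of Theorem~\ref{th:HPT} but simply records it as classical, noting that it is ``a fairly straightforward consequence of the compactness theorem'' with a reference to \cite[Lemma~3.1.2]{TZ2012}. What you have written is a clean rendition of that argument, so there is nothing to compare.
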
 

The homomorphism preservation theorem is a fairly straightforward consequence of the compactness theorem, see \eg \cite[Lemma~3.1.2]{TZ2012}. However, applying the compactness theorem means that we lose control over the syntactic shape of an existential positive sentence $\psi$ that is equivalent to a sentence $\phi$ preserved under homomorphisms. In particular, it is an ineffective approach if we want to determine to which extent the passage from $\phi$ to $\psi$ increases the ``complexity''.

One way to measure the complexity of a formula $\phi$ is in terms of its \emph{quantifier rank}, \ie the maximum number of nested quantifiers appearing in $\phi$. 
Rossman's \emph{equirank homomorphism preservation theorem}~\cite{Rossman2008}, which applies to relational vocabularies (\ie vocabularies that contain no constant or function symbols), shows that it is possible to find a $\psi$ whose quantifier rank is less than or equal to the quantifier rank of~$\phi$:
\begin{theorem}\label{th:equirank-HPT}
A first-order sentence of quantifier rank at most $k$ is preserved under homomorphisms if, and only if, it is equivalent to an existential positive sentence of quantifier rank at most $k$.
\end{theorem}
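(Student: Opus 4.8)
\noindent\emph{Proof proposal.} The ``if'' direction is routine: a straightforward induction on formula structure shows that existential positive sentences are preserved under homomorphisms, hence so is any sentence equivalent to one. The content is the ``only if'' direction, which I would prove by a canonical-form argument resting on one nontrivial factorisation lemma.

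Fix a sentence $\phi$ of quantifier rank at most $k$ preserved under homomorphisms; as $\phi$ mentions only finitely many relation symbols we may assume the vocabulary is finite and relational. For structures $\As,\Bs$ write $\As\Rrightarrow_k\Bs$ when every $\EFO$ sentence of quantifier rank at most $k$ true in $\As$ is true in $\Bs$. Over a finite relational vocabulary there are only finitely many $\EFO$ sentences of quantifier rank $\le k$ up to logical equivalence, so the $\Rrightarrow_k$-behaviour of $\As$ is captured by a single $\EFO$ sentence $\theta_\As$ of quantifier rank $\le k$, namely the conjunction of the finitely many inequivalent such sentences holding in $\As$; then $\Bs\vDash\theta_\As$ if and only if $\As\Rrightarrow_k\Bs$. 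I would now argue, first, that (i) $\Mod(\phi)$ is upward closed for $\Rrightarrow_k$ --- if $\As\vDash\phi$ and $\As\Rrightarrow_k\Bs$ then $\Bs\vDash\phi$ --- and, second, that (ii) $\phi$ is then logically equivalent to the finite disjunction $\bigvee\{\theta_\As : \As\vDash\phi\}$, which is an $\EFO$ sentence of quantifier rank at most $k$. Step (ii) follows at once from (i): if $\Bs\vDash\phi$ then $\Bs\vDash\theta_\Bs$, a disjunct; conversely if $\Bs\vDash\theta_\As$ with $\As\vDash\phi$, then $\As\Rrightarrow_k\Bs$, hence $\Bs\vDash\phi$ by (i). So the whole argument reduces to (i).

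The crux of (i) is the following \emph{factorisation lemma}: for all $\As,\Bs$, if $\As\Rrightarrow_k\Bs$ then there are a structure $\Cs$ and a homomorphism $\As\to\Cs$ with $\Cs\equiv_k\Bs$, where $\equiv_k$ denotes equivalence for all first-order sentences of quantifier rank at most $k$. Granting this, (i) is immediate: given $\As\vDash\phi$ and $\As\Rrightarrow_k\Bs$, pick such a $\Cs$; preservation under homomorphisms gives $\Cs\vDash\phi$, and then $\Cs\equiv_k\Bs$ together with $\phi$ having quantifier rank $\le k$ gives $\Bs\vDash\phi$.

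Establishing the factorisation lemma is where the real work lies, and where I expect the main obstacle. I would start from the game-theoretic reading of $\Rrightarrow_k$: it holds iff Duplicator wins the $k$-round one-sided game in which Spoiler plays elements $a_1,\dots,a_k$ of $\As$, Duplicator answers with $b_1,\dots,b_k$ in $\Bs$, and Duplicator wins iff $a_i\mapsto b_i$ is a partial homomorphism --- equivalently, iff there is a homomorphism $\Ek\As\to\Bs$ out of the Ehrenfeucht--\Fraisse{} comonad applied to $\As$. From a winning strategy I would build $\Cs$ as a depth-$k$ tree unfolding of $\As$ decorated by Duplicator's responses, arranging the relations of $\Cs$ so that (a) $\As$ maps homomorphically into $\Cs$ while (b) the projection recording Duplicator's moves exhibits $\Cs\equiv_k\Bs$. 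The difficulty is that (a) and (b) pull against each other: a homomorphism out of $\As$ forces $\Cs$ to carry all of $\As$'s relational structure somewhere, so the unfolding must spread that structure over enough fresh elements that no element of $\Cs$ is subject to conflicting $\As$- and $\Bs$-demands, yet it must stay shallow enough that a $k$-round game cannot distinguish $\Cs$ from $\Bs$. (The tempting shortcut of applying compactness to the quantifier-rank-$\le k$ theory of $\Bs$ together with the positive atomic diagram of $\As$ fails precisely because the positive diagram of even a finite substructure of $\As$ has unbounded quantifier rank, so a genuine bounded-depth construction seems unavoidable.) This tension between homomorphisms and bounded-resource equivalences is exactly what the arboreal-category machinery developed below is built to axiomatise, so I would expect that framework to yield both the lemma and (i) uniformly.
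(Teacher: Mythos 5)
Your reduction of the theorem to statement (i) — saturation of $\Mod(\phi)$ under rank-$k$ existential-positive preservation, followed by the finite disjunction of the sentences $\theta_\As$ — is sound, and it is the same reduction the paper performs via Lemma~\ref{concrete-charact-log-eq} and Example~\ref{logical-rel-EF} when it identifies Theorem~\ref{th:equirank-HPT} with property \ref{HP-abstract}. The gap is that your factorisation lemma is \emph{false} as stated. Take $k=3$, a single binary relation symbol $E$, let $\As=C_9$ be the undirected $9$-cycle and $\Bs=K_2$ the single undirected edge (irreflexive, symmetric). Every existential positive sentence of quantifier rank at most $3$ is equivalent to a finite disjunction of primitive-positive sentences whose canonical structures have tree-depth at most $3$ (this is the normal form underlying Example~\ref{logical-rel-EF}); a structure of tree-depth at most $3$ has no simple path on more than $7$ vertices, so any odd cycle it contains has length at most $7$, and a shorter odd cycle admits no homomorphism into $C_9$. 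Hence every such pattern mapping into $C_9$ is bipartite and therefore maps into $K_2$, which gives $\As \Rrightarrow^{\EFO_3} \Bs$. On the other hand $K_2$ is axiomatised up to isomorphism by a single sentence of quantifier rank $3$, so any $\Cs$ with $\Cs\equiv^{\FO_3}\Bs$ is isomorphic to $K_2$; since $C_9$ is not $2$-colourable, there is no homomorphism $\As\to\Cs$ for any such $\Cs$. So no structure can play the role your lemma demands, and the entire argument for (i) collapses at this point. (The theorem itself is untouched: it only asserts that hom-preserved rank-$3$ sentences true in $C_9$ hold in $K_2$, which is the case.)

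What is true — and is what Rossman proves and the paper axiomatises — is a \emph{two-sided} factorisation in which the rank-$k$ equivalence sits strictly in the middle, flanked on both ends by homomorphisms pointing the right way: from $\As \Rrightarrow^{\EFO_k} \Bs$ one obtains homomorphisms $\As\to\As^*$ and $\Ds\to\Bs$ together with $\As^*\equiv^{\FO_k}\Ds$. Concretely, $\Ds=(G_k\As)^*$, where $G_k\As$ is the depth-$k$ unravelling $H\Ek J\As$ (which maps into $\Bs$ by adjointness exactly because $\As \Rrightarrow^{\EFO_k} \Bs$) and $(-)^*$ is the $k$-extendable (saturated) cover provided by Theorem~\ref{t:model-construction}; the middle equivalence is then obtained by a back-and-forth argument between the two saturated structures (Proposition~\ref{p:extendable}), and Proposition~\ref{p:axiomatic-HP} assembles the chain. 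Note also that your sketched construction of $\Cs$ as a decorated depth-$k$ unfolding of $\As$ has the arrow backwards: unfoldings map \emph{onto} $\As$ via the counit, they do not receive a homomorphism from $\As$ — and in the counterexample above no choice of $\Cs$ can reconcile the two demands. If you replace your lemma by the two-sided statement (homomorphism out of $\As$, homomorphism into $\Bs$, $\equiv^{\FO_k}$ only between the saturated middle terms), your steps (i) and (ii) go through verbatim, and that replacement is precisely where the substantive work of the paper lies.
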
 
This is a considerable improvement on the classical homomorphism preservation theorem and was proved by Rossman on the way to his celebrated \emph{finite homomorphism preservation theorem}, stating that Theorem~\ref{th:HPT} admits a relativisation to finite structures.\footnote{The finite homomorphism preservation theorem is a major result in finite model theory, as well as a surprising one given that most preservation theorems fail when restricted to finite structures. Note that the finite homomorphism preservation theorem and the classical one are incomparable results.} In the proof of the equirank homomorphism preservation theorem, the application of the compactness theorem is replaced with a model construction which is similar in spirit to the construction of a saturated elementary extension of a given structure.

The main contribution of this paper consists in laying out a categorical framework in which ``equi-resource'' homomorphism preservation theorems can be proved in an axiomatic fashion. 
In \cite{abramsky2017pebbling,DBLP:conf/csl/AbramskyS18}, \emph{game comonads} were introduced to capture in a structural way a number of logic fragments and corresponding combinatorial and game-theoretic notions, both at the level of finite and infinite structures. For a recent survey article, see~\cite{emerging2022}. The template of game comonads was axiomatised in~\cite{AR2021icalp}, see also~\cite{AR2022}, by means of the notion of \emph{arboreal category}.
Our proof strategy consists in establishing abstract homomorphism preservation theorems at the general level of arboreal categories, and then instantiating these results for specific choices of game comonads. 

We thus obtain novel equi-resource homomorphism preservation theorems for modal and guarded logics (Theorems~\ref{th:hpt-graded-modal-logic} and~\ref{th:hpt-guarded}, respectively), along with relativisations to appropriate subclasses of structures---\eg the class of finite structures (Theorems~\ref{th:hpt-graded-modal-logic-finite} and~\ref{th:hpt-guarded-logics-finite}, respectively). Further, we derive a relativisation result (Theorem~\ref{t:equirank-hpt-relative}) which refines Rossman's equirank homomorphism preservation theorem.

This paper is organised as follows. In Section~\ref{s:prelim-game-comonads} we provide a brief introduction to game comonads, and in Section~\ref{s:prelim-arboreal} we recall the necessary definitions and facts concerning arboreal categories. Homomorphism preservation theorems are recast into categorical statements and proved at the level of arboreal categories in Sections~\ref{s:logics-HPTs},~\ref{s:exploring-the-landscape} and~\ref{s:axiomatic}. Section~\ref{s:proof-mc} contains the proof of our main technical result, namely Theorem~\ref{t:model-construction}. Finally, in Section~\ref{s:well-behaved} we present a simple class of relativisation results for classes of well-behaved structures; these are discussed separately as they are not \emph{equi-resource} homomorphism preservation theorems.

Throughout this article, we shall assume the reader is familiar with the basic notions of category theory; standard references include \eg \cite{adamek2004abstract,MacLane}.

\vspace{1em}
Let us conclude this introduction with some observations to guide the reader through the article:
\begin{description}
\item[From games to arboreal categories] This paper builds on our prior work~\cite{AR2022} on arboreal categories. Familiarity with the latter work would be beneficial, but we do recall the necessary material in Section~\ref{s:prelim-arboreal} in order to make the article self-contained. Let us briefly overview the different levels of abstractions leading from model-comparison games to arboreal categories.

Preservation of many fragments of first-order logic, and variations thereof, can be described in terms of the existence of a winning strategy in a game. Game comonads arise from the insight that the construction associating to a structure~$A$ its collection~$GA$ of ``explorations'' in the game induces a comonad~$G$, which is an enrichment of the \mbox{(finite-)list} comonad. The coalgebras for the comonad~$G$ capture compatible forest covers of structures, which can be thought of as unfoldings of games, and winning strategies in the game correspond to various types of morphisms between coalgebras.

Arboreal categories capture the main features of the categories of coalgebras for game comonads, and the minimal  properties required to fruitfully imitate the notions of back-and-forth equivalence and bisimulation, and to show that they coincide, gave rise to the axioms of arboreal categories. The filtering of coalgebra covers corresponding to the stratification induced by the resource parameter in the logic (\eg the quantifier rank of formulas) begets resource-indexed arboreal adjunctions.

\item[A dividing line] Analysing homomorphism preservation theorems from a categorical perspective, we identify the \emph{bisimilar companion property} as a key dividing line, and Section~\ref{s:exploring-the-landscape} is divided accordingly. Suppose that $G$ is a game comonad on a category of structures corresponding to a logic fragment $\LL$. Typically, any structure $A$ is equivalent to $GA$ in the \emph{existential positive} fragment of $\LL$, but there could be a sentence in $\LL$ that is valid in precisely one of the two structures. The bisimilar companion property asserts that this is not the case: $A$ and $GA$ are indistinguishable in the logic fragment~$\LL$.

The bisimilar companion property is satisfied \eg by the comonads corresponding to modal and guarded logics.  
For these logics, we obtain simple proofs of equi-resource homomorphism preservation theorems.

On the other hand, the bisimilar companion property fails for first-order logic with bounded quantifier rank, or with a bounded number of variables (the latter falls outside of the scope of this paper, see Remark~\ref{rem:does-not-apply-to}), and showing that this property can be ``forced'' requires a considerable amount of work. Our approach is inspired by that of Rossman~\cite{Rossman2008}, and shows that the methods and ideas put forward in \emph{op.\ cit.} can be lifted to the axiomatic setting. There is a difference though: whereas Rossman uses \emph{$n$-cores}, approximating the notion of \emph{core}~\cite{HN1992}, we use cofree coalgebras. The former are (minimal) retracts of structures, while the latter can be thought of as ``coverings''.

\item[Equality] The framework of game comonads typically encodes the preservation of \emph{equality-free} fragments of logics. Generally, the difference is invisible when dealing with existential positive fragments, or extensions with counting quantifiers, as these often admit equality elimination (\cf \cite[\S VI]{DJR2021} for the case of logics with counting quantifiers). The difference is also immaterial for modal logic, because its standard translation is contained in the equality-free fragment of first-order logic. Nevertheless, this gap needs to be addressed when considering \eg first-order logic with bounded quantifier rank.

This was handled in \cite{abramsky2017pebbling,AS2021} and subsequent works by considering a fresh binary relation $I$ and assuming that (i)~$I$ is interpreted as the identity relation on the class of structures we are interested in, and (ii)~all homomorphisms preserve~$I$. Note however that, if $A$ is a structure in which $I$ coincides with the identity relation, the structure $GA$ obtained by applying a game comonad $G$ to $A$ need not have the same property. 

In the present paper, we adopt the approach outlined in~\cite{AR2022}, which is better suited for an axiomatic treatment. This consists in composing the comonadic adjunction induced by a game comonad with an adjunction that ``introduces and eliminates $I$'' (see Example~\ref{ex:res-ind-arb-adj} for more details). This is why, throughout, we need to work with possibly non-comonadic adjunctions.
\end{description}

\section{Logic fragments and game comonads}\label{s:prelim-game-comonads}

We shall mainly deal with two types of vocabularies:
\begin{itemize}
\item \emph{Relational vocabularies}, \ie first-order vocabularies that contain no function or constant symbols.
\item \emph{Multi-modal vocabularies}, \ie relational vocabularies in which every relation symbol has arity at most $2$.
\end{itemize} 

Multi-modal vocabularies will be referred to simply as \emph{modal vocabularies}. If $\sg$ is a modal vocabulary, we can assign to each unary relation symbol $P\in \sg$ a propositional variable $p$, and to each binary relation symbol $R\in \sg$ modalities $\Diamond_R$ and $\Box_R$. We refer to $\sg$-structures as \emph{Kripke structures}. For any Kripke structure $\As$, the interpretation of $P$ in $\As$, denoted by $P^{\As}$, corresponds to the valuation of the propositional variable $p$, and the binary relation $R^{\As}$ to the accessibility relation for the modalities $\Diamond_R$ and $\Box_R$. 

For our running examples and intended applications, we will be interested in the following resource-bounded fragments of first-order logic (always including the equality symbol) and modal logic, for a relational vocabulary $\sg$ and positive integer~$k$:
\begin{itemize}
\item $\FO_k$ and $\EFO_k$: these denote, respectively, the set of sentences of quantifier rank $\leq k$ in the vocabulary $\sg$, and its existential positive fragment.
\item $\ML_k$ and $\exists^+\ML_k$: if $\sg$ is a modal vocabulary, $\ML_k$ is the set of modal formulas of modal depth $\leq k$ in the vocabulary $\sg$ (recall that the \emph{modal depth} is the maximum number of nested modalities in a formula). Moreover, $\exists^+\ML_k$ denotes the existential positive fragment of $\ML_k$, \ie the set of formulas in $\ML_k$ that use only the connectives $\vee,\wedge$ and the diamond modalities.
\item $\ML_k(\#)$: this is the extension of $\ML_k$ with \emph{graded modalities}. Recall that graded modalities have the form $\Diamond_R^n$ and $\Box_R^n$, with $n\geq 0$, and $\As, a \models {\Diamond_R^n} \, \phi$ if there are at least $n$ $R$-successors of $a$ satisfying $\phi$; $\Box_R^n\phi$ is logically equivalent to $\neg \Diamond_R^n \neg \phi$.
\end{itemize}
Further logics, namely \emph{guarded logics}, will be considered in Section~\ref{ss:tame}.

Any logic fragment\footnote{We employ the nomenclature ``logic fragment'', rather than the more customary term ``theory'', as we are mainly interested in the situation where $\LL$ is defined by constraining the syntactic shape~of~sentences.} $\LL$, \ie any set of first-order sentences in a vocabulary $\tau$, induces an equivalence relation $\equiv^\LL$ on $\tau$-structures defined as follows: for all $\tau$-structures $\As, \Bs$,
\[
\As \equiv^{\LL} \Bs \ \ \Longleftrightarrow \ \ \forall \phi\in\LL. \, (\As\vDash \phi \, \Leftrightarrow \, \Bs\vDash \phi).
\]
If $\LL$ consists of all first-order sentences, $\equiv^\LL$ coincides with elementary equivalence. 

Syntax-free characterisations of the equivalence relations $\equiv^\LL$ play an important role in model theory. For example, the Keisler-Shelah theorem states that two $\tau$-structures are elementarily equivalent if, and only if, they have isomorphic ultrapowers. A different approach is through \emph{model comparison games}. These have a wide range of applications in model theory, see \eg \cite[\S 3]{Hod93}, and are central to finite model theory where tools such as the compactness theorem and ultraproducts are not available. Model comparison games lead to a perspective which may be described as ``model theory without compactness''. 

Game comonads arise from the insight that model comparison games can be seen as semantic constructions in their own right. Although we shall not employ games as a tool, we recall two examples of games to motivate the framework of game comonads.

Henceforth we shall work with a relational vocabulary $\sg$.
Let $\As,\Bs$ be $\sigma$-structures. Both types of game are two-player games played between Spoiler and Duplicator. Whereas Spoiler aims to show that $\As$ and $\Bs$ are different, Duplicator aims to show that they are similar. Each game is played in a number of rounds:
\begin{description}
\item[Ehrenfeucht-\Fraisse~game] In the $i$th round, Spoiler chooses an element in one of the structures and Duplicator responds by choosing an element in the other structure. After $k$ rounds have been played, we have sequences $[a_1, \ldots, a_k]$ and $[b_1, \ldots, b_k]$ of elements from $\As$ and $\Bs$ respectively. Duplicator wins this play if the ensuing relation $r\coloneqq \{(a_i, b_i) \mid 1 \leq i \leq k \}$ is a partial isomorphism. Duplicator wins the $k$-round game if they have a strategy which is winning after $i$ rounds, for all $1\leq i\leq k$.
\item[Bisimulation game] Suppose $\sg$ is a modal vocabulary. The game is played between pointed Kripke structures $(\As, a)$ and $(\Bs, b)$, where $a\in \As$ and $b\in \Bs$. The initial position is $(a_0,b_0)=(a,b)$. In the $i$th round, with current position $(a_{i-1},b_{i-1})$, Spoiler chooses one of the two structures, say $\As$, a binary relation symbol $R$ in $\sg$, and an element $a_{i}\in \As$ such that $(a_{i-1},a_i)\in R^{\As}$. Duplicator responds by choosing an element in the other structure, say $b_i\in \Bs$, such that $(b_{i-1},b_i)\in R^{\Bs}$. If Duplicator has no such response, they lose. Duplicator wins the $k$-round game if, for all unary relation symbols $P$ in $\sg$, we have $P^{\As}(a_i) \Leftrightarrow P^{\Bs}(b_i)$ for all $0\leq i\leq k$.
\end{description}

Assume the vocabulary $\sg$ is finite. The classical Ehrenfeucht-\Fraisse~theorem \cite{Ehrenfeucht1960,Fraisse1954} states that Duplicator has a winning strategy in the $k$-round Ehrenfeucht-\Fraisse~game played between $\As$ and $\Bs$ if, and only if, $\As$ and $\Bs$ satisfy the same first-order sentences of quantifier rank at most $k$. Similarly, Duplicator has a winning strategy in the $k$-round bisimulation game played between pointed Kripke structures $(\As,a)$ and $(\Bs, b)$ if, and only if, $(\As,a)$ and $(\Bs, b)$ satisfy the same modal formulas of modal depth at most $k$~\cite{HM1980}.

\begin{remark}
In both Ehrenfeucht-\Fraisse~and bisimulation games, the resource parameter is the number of rounds. This need not be the case in general. For instance, the resource parameter for pebble games \cite{Barwise1977,Immerman1982}, which correspond to finite-variable fragments of first-order logic, is the number of pebbles available to the players.
\end{remark}

Next, we introduce the comonads corresponding to Ehrenfeucht-\Fraisse~and bisimulation games, respectively.
For each $\sg$-structure $\As$, denote by $\Ek(\As)$ the set of all non-empty sequences of length at most $k$ of elements from $\As$. In other words, $\Ek(\As)$ is the set of all plays in $\As$ in the $k$-round Ehrenfeucht-\Fraisse~game. The interpretations of the relation symbols can be lifted from $\As$ to $\Ek(\As)$ as follows. Let $\epsilon_{\As}\colon \Ek(\As) \to \As$ be the function sending a sequence to its last element. For each relation symbol $R\in\sg$ of arity~$j$, we define $R^{\Ek(\As)}$ to be the set of all tuples $(s_1,\ldots,s_j)\in \Ek(\As)^j$ such that:
\begin{enumerate}[label=(\roman*)]
\item The sequences $s_1,\ldots,s_j$ are pairwise comparable in the prefix order.
\item $(\epsilon_{\As}(s_1),\ldots,\epsilon_{\As}(s_j))\in R^{\As}$.
\end{enumerate}
For every homomorphism $f\colon \As\to \Bs$, let 
\[
\Ek(f)\colon \Ek(\As)\to \Ek(\Bs), \ \ [a_1,\ldots,a_l]\mapsto [f(a_1),\ldots, f(a_l)]. 
\]
This yields a comonad (in fact, a \emph{family} of comonads indexed by $k>0$) on the category $\CS$ of $\sg$-structures and their homomorphisms, known as \emph{Ehrenfeucht-\Fraisse~comonad}~\cite{DBLP:conf/csl/AbramskyS18}. The underlying functor of this comonad is $\Ek\colon \CS\to \CS$, the counit is $\epsilon$, and the comultiplication at $\As$ is the homomorphism 
\[
\Ek(\As)\to \Ek\Ek(\As), \ \ [a_1,\ldots,a_l]\mapsto [[a_1],[a_1,a_2],\ldots,[a_1,\ldots,a_l]].
\]

A similar construction applies to $k$-round bisimulation games. 
Suppose $\sg$ is a modal vocabulary and let $(\As,a)$ be a pointed Kripke structure. We define a Kripke structure $\Mk(\As,a)$ whose carrier is the set of all paths $p$ of length at most $k$ starting from~$a$:
\[ a \xrightarrow{R_1} a_1 \xrightarrow{R_2} a_2 \to \cdots \xrightarrow{R_n} a_n \]
where $R_1, \dots, R_n$ are binary relation symbols in $\sg$.  
If $P\in\sg$ is unary then $P^{\Mk(\As,a)}$ is the set of paths $p$ whose last element $a_n$ belongs to $P^{\As}$. For a binary relation symbol $R$ in $\sg$, $R^{\Mk(\As,a)}$ is the set of pairs of paths $(p,p')$ such that $p'$ is obtained by extending $p$ by one step along $R$. The distinguished element of the Kripke structure $\Mk(\As,a)$ is the trivial path $\langle a\rangle$ of length~$0$, and the function $\epsilon_{(\As,a)}\colon (\Mk(\As,a),\langle a\rangle) \to (\As,a)$ sending a path to its last element is a morphism of pointed Kripke structures. By similar reasoning to the one above, the assignment $(\As,a)\mapsto (\Mk(\As,a),\langle a\rangle)$ can be upgraded to a comonad on the category $\CSstar$ of pointed Kripke structures and their homomorphisms, the \emph{modal comonad}~\cite{DBLP:conf/csl/AbramskyS18}.

In addition to the examples mentioned above, the framework of game comonads covers a number of model comparison games, \cf \eg \cite{abramsky2017pebbling,Guarded2021,Hybrid2022,conghaile2021game,FVM2022,Paine2020}. In each case, they yield structural (syntax-free) characterisations of equivalence in the corresponding logic fragments. This will be illustrated from an axiomatic perspective in Section~\ref{s:prelim-resource-ind-arb-adj}.

\section{Arboreal categories}\label{s:prelim-arboreal}

In this section, we recall from~\cite{AR2022} the basic definitions and facts concerning arboreal categories.  
All categories under consideration are assumed to be locally small and \emph{well-powered}, \ie every object has a \emph{small} set of subobjects (as opposed to a proper class).

\subsection{Proper factorisation systems}
Given arrows $e$ and $m$ in a category $\C$, we say that $e$ has the \emph{left lifting property} with respect to $m$, or that $m$ has the \emph{right lifting property} with respect to $e$, if for every commutative square as on the left-hand side~below
\begin{equation*}
\begin{tikzcd}
{\cdot} \arrow{d}[swap]{e} \arrow{r} & {\cdot} \arrow{d}{m} \\
{\cdot} \arrow{r} & {\cdot}
\end{tikzcd}
\ \ \ \ \ \ \ \ \ \ \ \ \ 
\begin{tikzcd}
{\cdot} \arrow{d}[swap]{e} \arrow{r} & {\cdot} \arrow{d}{m} \arrow[leftarrow]{dl}[description]{d} \\
{\cdot} \arrow{r} & {\cdot}
\end{tikzcd}
\end{equation*}
there is an arrow $d$ such that the right-hand diagram above commutes. 
For any class $\mathscr{H}$ of morphisms in $\C$, let ${}^{\pit}\mathscr{H}$ (respectively $\mathscr{H}^{\pit}$) be the class of morphisms having the left (respectively right) lifting property with respect to every morphism in $\mathscr{H}$.

\begin{definition}\label{def:weak-f-s}
A pair of classes of morphisms $(\Q,\M)$ in a category $\C$ is a \emph{weak factorisation system} provided it satisfies the following conditions:
\begin{enumerate}[label=(\roman*)]
\item Every morphism $f$ in $\C$ can be written as $f = m \circ e$ with $e\in \Q$ and $m\in \M$.
\item $\Q={}^{\pit}\M$ and $\M=\Q^{\pit}$.
\end{enumerate}
A \emph{proper factorisation system} is a weak factorisation system $(\Q,\M)$ such that each arrow in $\Q$ is epic and each arrow in $\M$ is monic. 
We refer to $\M$-morphisms as \emph{embeddings} and denote them by $\emb$. $\Q$-morphisms will be referred to as \emph{quotients} and denoted by~$\epi$.
\end{definition} 

Next, we state some well known properties of proper factorisation systems (\cf \eg \cite{freyd1972categories} or~\cite{riehl2008factorization}) which will be used throughout the paper without further reference.
\begin{lemma}\label{l:factorisation-properties}
Let $(\Q,\M)$ be a proper factorisation system in $\C$. The following hold:
\begin{enumerate}[label=(\alph*)]
\item\label{compositions} $\Q$ and $\M$ are closed under compositions.
\item\label{isos} $\Q$ contains all retractions, $\M$ contains all sections, and $\Q\cap\M=\{\text{isomorphisms}\}$.
\item\label{pullbacks} The pullback of an $\M$-morphism along any morphism, if it exists, is in $\M$.
\item\label{cancellation-e} $g\circ f\in \Q$ implies $g\in\Q$.
\item\label{cancellation-m} $g\circ f\in\M$ implies $f\in\M$.
\end{enumerate}
\end{lemma}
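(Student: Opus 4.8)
The plan is to reduce every clause to the defining lifting-property characterisation $\Q = {}^{\pit}\M$ and $\M = \Q^{\pit}$ from Definition~\ref{def:weak-f-s}, together with the single extra ingredient supplied by properness: a morphism that is simultaneously monic and a split epimorphism must be an isomorphism, and dually a morphism that is simultaneously epic and a split monomorphism must be an isomorphism. Both are immediate --- if $m$ is monic with $m\circ s = \id$, then $m\circ(s\circ m) = m = m\circ\id$, so left-cancelling $m$ gives $s\circ m = \id$ --- and this observation is the only thing beyond formal nonsense that the argument needs.

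First I would record the purely formal facts that any class of the form ${}^{\pit}\mathscr{H}$, and dually any class of the form $\mathscr{H}^{\pit}$, contains all isomorphisms and is closed under composition; closure under composition is a two-step diagram chase in which a lifting problem for $e_2\circ e_1$ against $m\in\mathscr{H}$ is solved by first lifting against $e_1$ and then lifting the resulting square against $e_2$, the two triangles composing as required. Applied to $\Q$ and $\M$ this yields~\ref{compositions}, and also shows that every isomorphism lies in $\Q\cap\M$, which is one inclusion of~\ref{isos}. For the reverse inclusion, if $f\in\Q\cap\M$ then $f$ has the left lifting property with respect to itself, and solving the lifting problem given by the square with $f$ along the top and bottom and identities down the sides produces a two-sided inverse of $f$.

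The statements about retractions and sections in~\ref{isos}, and the cancellation properties~\ref{cancellation-e} and~\ref{cancellation-m}, I would all handle with one uniform trick: factor the relevant morphism as $m\circ e$ with $e\in\Q$, $m\in\M$, exhibit one of the two factors as a split epimorphism (respectively split monomorphism) by solving a suitable lifting problem, deduce via properness that this factor is an isomorphism, and conclude using~\ref{compositions}. Concretely, a retraction $r = m\circ e$ forces $m$ to be a split epi, hence an iso, so $r\in\Q$; if $g\circ f\in\Q$ and $g = m\circ e$, then the square with $g\circ f$ on the left, $m$ on the right, $e\circ f$ on top and $\id$ on the bottom admits a diagonal making $m$ a split epi, hence an iso, so $g = m\circ e\in\Q$; and the cases of sections and of~\ref{cancellation-m} are the formal duals (using a $\Q$-factor that is a split monomorphism, hence --- being epic --- an isomorphism).

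The one clause requiring a little more care is~\ref{pullbacks}. Given $m\in\M$, an arbitrary morphism $g$ into its codomain, and a pullback square with projections $p$ (over $g$) and $q$, to show $p\in\M = \Q^{\pit}$ I would take a lifting problem against an arbitrary $e\in\Q$, post-compose its two horizontal edges with $q$ and $g$ to obtain a lifting problem against $m$, solve it, and then feed the solution together with the lower edge of the original square into the universal property of the pullback to get a diagonal into the pullback object; the lower triangle commutes by construction and the upper one by the uniqueness part of that universal property. I expect the bulk of the argument to be bookkeeping rather than insight, the only genuine subtlety being this pullback chase; everywhere else the conceptual crux is simply isolating the ``monic $+$ split epi $\Rightarrow$ iso'' fact (and its dual) that properness provides, after which each clause is a short lifting argument.
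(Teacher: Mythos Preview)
Your proposal is correct and follows the standard route for these results. The paper itself does not prove this lemma at all: it simply states the properties as ``well known'' and defers to Freyd--Kelly and Riehl's notes, so there is no proof in the paper to compare against. Your argument is essentially the one found in those references --- reducing everything to the lifting characterisation $\Q = {}^{\pit}\M$, $\M = \Q^{\pit}$, and using properness only via the observation that a monic split epi (dually, an epic split mono) is an isomorphism. The pullback chase in~\ref{pullbacks} is handled correctly, including the use of uniqueness in the universal property to close the upper triangle.
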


Assume $\C$ is a category admitting a proper factorisation system $(\Q,\M)$. In the same way that one usually defines the poset of subobjects of a given object $X\in\C$, we can define the poset $\Emb{X}$ of $\M$-subobjects of $X$. Given embeddings $m\colon S\emb X$ and $n\colon T\emb X$, let us say that $m\trianglelefteq n$ provided there is a morphism $i\colon S\to T$ such that $m=n\circ i$ (if it exists, $i$ is necessarily an embedding).
\[\begin{tikzcd}
S \arrow[rightarrowtail]{r}{m} \arrow[dashed, swap]{d}{i} & X \\
T \arrow[rightarrowtail]{ur}[swap]{n} & {}
\end{tikzcd}\] 
This yields a preorder on the class of all embeddings with codomain $X$. The symmetrisation~$\sim$ of~$\trianglelefteq$ is given by $m\sim n$ if, and only if, there exists an isomorphism $i\colon S\to T$ such that $m=n\circ i$. Let $\Emb{X}$ be the class of $\sim$-equivalence classes of embeddings with codomain $X$, equipped with the natural partial order $\leq$ induced by~$\trianglelefteq$. We shall systematically represent a $\sim$-equivalence class by any of its representatives. As $\C$ is well-powered and every embedding is a monomorphism, $\Emb{X}$ is a set.

For any morphism $f\colon X\to Y$ and embedding $m\colon S\emb X$, consider the (quotient, embedding) factorisation of $f\circ m$: 
\[\begin{tikzcd}
S \arrow[twoheadrightarrow]{r} & \exists_f S \arrow[rightarrowtail]{r}{\exists_f m} & Y.
\end{tikzcd}\]
This yields a monotone map $\exists_f\colon \Emb{X}\to\Emb{Y}$ sending $m$ to $\exists_f m$. Note that the map $\exists_f$ is well-defined because factorisations are unique up to isomorphism. If $f$ is an embedding (or, more generally, $f\circ m$ is an embedding), $\exists_f m$ can be identified with $f\circ m$. For the following observation, \cf \eg \cite[Lemma~2.7(a)]{AR2022}.

\begin{lemma}\label{l:emb-quo-order-embeddings}
Let $\C$ be a category equipped with a proper factorisation system and let $f\colon X\emb Y$ be an embedding in $\C$. Then $\exists_f \colon \Emb{X}\to \Emb{Y}$ is an order-embedding.
\end{lemma}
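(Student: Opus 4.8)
The plan is to show that $\exists_f$ reflects the order; since $\exists_f$ is already known to be monotone, this suffices for it to be an order-embedding (injectivity then comes for free from antisymmetry of $\leq$ on $\Emb{X}$: if $\exists_f m = \exists_f n$ then $m \leq n$ and $n \leq m$, whence $m = n$ in $\Emb{X}$).

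The starting point is the observation recorded just before the statement: because $f$ is an embedding, for any embedding $m\colon S\emb X$ the composite $f\circ m$ lies in $\M$ by closure of $\M$ under composition (Lemma~\ref{l:factorisation-properties}\ref{compositions}), so its $(\Q,\M)$-factorisation is the trivial one, and $\exists_f m$ may be identified with $f\circ m\colon S\emb Y$. Thus $\exists_f$ is, up to this identification, simply post-composition with $f$.

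Now suppose $m\colon S\emb X$ and $n\colon T\emb X$ are embeddings with $\exists_f m \leq \exists_f n$ in $\Emb{Y}$. By the previous paragraph and the definition of $\trianglelefteq$, this means there is a morphism $j\colon S\to T$ with $(f\circ n)\circ j = f\circ m$. Since $f$ is an embedding, it is in particular a monomorphism, so we may cancel it on the left to obtain $n\circ j = m$. Hence $m\trianglelefteq n$, i.e.\ $m\leq n$ in $\Emb{X}$. This establishes that $\exists_f$ reflects the order, completing the proof.

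There is essentially no serious obstacle here: the whole argument rests on the two facts that $\M$ is closed under composition (making $\exists_f$ literally post-composition with $f$ on $\M$-subobjects) and that embeddings are monic (giving left-cancellation of $f$). The only point requiring a moment's care is the identification of $\exists_f m$ with $f\circ m$, which is exactly the remark preceding the lemma and is justified by uniqueness of factorisations.
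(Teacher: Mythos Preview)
Your proof is correct. The paper does not actually give a proof of this lemma but simply refers to \cite[Lemma~2.7(a)]{AR2022}; your argument---identify $\exists_f$ with post-composition by $f$ using closure of $\M$ under composition, then use that $f$ is monic to cancel and reflect the order---is exactly the standard one and is what that reference contains.
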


\subsection{Paths and arboreal categories}
Let $\C$ be a category endowed with a proper factorisation system $(\Q,\M)$.
\begin{definition}
An object $X$ of $\C$ is called a \emph{path} provided the poset $\Emb{X}$ is a finite linear order. Paths will be denoted by $P,Q$ and variations thereof.
\end{definition}

The collection of paths is closed under embeddings and quotients. That is, given an arrow $f\colon X\to Y$, if $f$ is an embedding and $Y$ is a path then $X$ is a path, and if $f$ is a quotient and $X$ is a path then $Y$ is a path~\cite[Lemma~3.5]{AR2022}.  

A \emph{path embedding} is an embedding $P\emb X$ whose domain is a path. 
We let $\Path{X}$ denote the sub-poset of $\Emb{X}$ consisting of the path embeddings. 
Because paths are closed under quotients, for any arrow $f\colon X\to Y$ the monotone map $\exists_f \colon \Emb{X}\to \Emb{Y}$ restricts to a monotone map
\begin{equation}\label{eq:Path-functor}
\Path{f}\colon \Path{X}\to\Path{Y}, \ \ (m\colon P\emb X)\mapsto (\exists_f m\colon \exists_f P\emb Y).
\end{equation}

Henceforth, we shall assume that $\C$ contains, up to isomorphism, only a set of paths. Thus, whenever convenient, we shall work with a small skeleton of the full subcategory of paths.
For any object $X$ of $\C$, we have a diagram with vertex $X$ consisting of all path embeddings with codomain $X$; the morphisms between paths are those which make the obvious triangles commute (note that they are necessarily embeddings):
\[\begin{tikzcd}[column sep=1.2em, row sep=2em]
 & X & \\
 P \arrow[bend left=20,rightarrowtail]{ur} \arrow[rightarrowtail]{rr} & & Q \arrow[bend right=20,rightarrowtail]{ul}
\end{tikzcd}\]
This yields a cocone over the small diagram $\Path{X}$. We say that $X$ is \emph{path-generated} provided this is a colimit cocone in $\C$.

Suppose for a moment that coproducts of sets of paths exist in $\C$. An object $X$ of $\C$ is \emph{connected} if, for all non-empty sets of paths $\{P_i\mid i\in I\}$ in~$\C$, any morphism 
\[
X\to \coprod_{i\in I}{P_i}
\] 
factors through some coproduct injection $P_j\to \coprod_{i\in I}{P_i}$.

In order to state the definition of arboreal category, let us say that a proper factorisation system is \emph{stable} if, for any quotient $e$ and embedding $m$ with common codomain, the pullback of $e$ along $m$ exists and is a quotient.

\begin{definition}
An \emph{arboreal category} is a category $\C$, equipped with a stable proper factorisation system, that satisfies the following conditions:
\begin{enumerate}[label=(\roman*)]
\item\label{ax:colimits} $\C$ has all coproducts of sets of paths.
\item\label{ax:2-out-of-3} For any paths $P,Q,Q'$ in $\C$, if a composite $P\to Q \to Q'$ is a quotient then so is $P\to Q$.
\item\label{ax:path-generated} Every object of $\C$ is path-generated. 
\item\label{ax:connected} Every path in $\C$ is connected.
\end{enumerate}
\end{definition}

\begin{remark}
Item~\ref{ax:2-out-of-3} in the previous definition is equivalent to the following \emph{2-out-of-3 condition}: For any paths $P,Q,Q'$ and morphisms
\[\begin{tikzcd}
P \arrow{r}{f} & Q \arrow{r}{g} & Q',
\end{tikzcd}\]
if any two of $f$, $g$, and $g\circ f$ are quotients, then so is the third. See \cite[Remark~3.9]{AR2022}. Moreover, item~\ref{ax:path-generated} is equivalent to saying that the inclusion functor $\Cp\hookrightarrow \C$ is dense, where $\Cp$ is the full subcategory of $\C$ defined by the paths \cite[Lemma~5.1]{AR2022}.

Finally, note that any arboreal category admits an initial object, obtained as the coproduct of the empty set, and any initial object is a path because its poset of $\M$-subobjects has a single element---namely, the equivalence class of the identity.
\end{remark}

If $(P, {\leq})$ is a poset, then $C \subseteq P$ is a \emph{chain}  if it is linearly ordered. $(P,\leq)$ is a \emph{forest} if, for all $x\in P$, the set $\down x\coloneqq \{y\in P\mid y\leq x\}$ is a finite chain. 
The \emph{height} of a forest is the supremum of the cardinalities of its chains.
The \emph{covering relation} $\cvr$ associated with a partial order $\leq$ is defined by $u\cvr v$ if and only if $u<v$ and there is no $w$ such that $u<w< v$. 
The \emph{roots} of a forest are the minimal elements, and a \emph{tree} is a forest with at most one root. 
Morphisms of forests are maps that preserve roots and the covering relation. 
The category of forests is denoted by $\F$, and the full subcategory of trees by~$\T$. 

\begin{example}\label{ex:RE}
Let $\sg$ be a relational vocabulary.
A \emph{forest-ordered $\sg$-structure} $(\As, {\leq})$ is a $\sg$-structure $\As$ equipped with a forest order $\leq$.
A morphism of forest-ordered $\sg$-structures $f\colon (\As, {\leq}) \to (\Bs, {\leq'})$ is a $\sg$-homomorphism $f\colon \As \to \Bs$ that is also a forest morphism. This determines a category $\R(\sg)$. 
We equip $\R(\sg)$ with the factorisation system given by (surjective morphisms, embeddings), where an embedding is a morphism which is an embedding \emph{qua} $\sg$-homomorphism.
Let $\RT(\sg)$ be the full subcategory of $\R(\sg)$ determined by those objects $(\As, {\leq})$ satisfying the following condition: 
\begin{enumerate}[label=\textnormal{(E)}]
\item\label{E} If $a,b\in\As$ are distinct elements that appear in a tuple of related elements $(a_1,\ldots,a_l)\in R^{\As}$ for some $R\in\sg$, then either $a<b$ or $b<a$.\footnote{\Iec if $a$ and $b$ are adjacent in the \emph{Gaifman graph} of $\As$, then they are comparable in the forest order.}
\end{enumerate}
For each $k>0$, let $\RTk(\sg)$ be the full subcategory of $\RT(\sg)$ of forest-ordered structures of height $\leq k$. In \cite[Theorem~9.1]{AS2021}, it is shown that $\RTk(\sg)$ is isomorphic to the category of coalgebras for the Ehrenfeucht-\Fraisse~comonad $\Ek$ on $\CS$. The objects $(\As, {\leq})$ of $\RTk(\sg)$ are forest covers of $\As$ witnessing that its \emph{tree-depth} is at most $k$ \cite{nevsetvril2006tree}.

The category $\RT(\sg)$ is arboreal when equipped with the restriction of the factorisation system on $\R(\sg)$. The paths in $\RT(\sg)$ are those objects in which the order is a finite chain. Similarly, $\RTk(\sg)$ is an arboreal category for all $k>0$, when equipped with the restriction of the factorisation system on $\R(\sg)$. See~\cite[Examples~5.4]{AR2022}.
\end{example}

\begin{example}\label{ex:RM}
Assume that $\sg$ is a modal vocabulary. Let $\RM(\sg)$ be the full subcategory of $\R(\sg)$ consisting of the tree-ordered $\sg$-structures $(\As, {\leq})$ satisfying: 
\begin{enumerate}[label=\textnormal{(M)}]
\item\label{M} For $a,b\in\As$, $a \cvr b$ if and only if $(a,b)\in R^{\As}$ for some unique binary relation $R$.
\end{enumerate}
For each $k>0$, the full subcategory $\RMk(\sg)$ of $\RM(\sg)$ consisting of the tree-ordered structures of height at most $k$ is isomorphic to the category of coalgebras for the modal comonad $\Mk$ on $\CSstar$~\cite[Theorem~9.5]{AS2021}. When equipped with the restriction of the factorisation system on $\R(\sg)$, the category $\RM(\sg)$ is arboreal and its paths are those objects in which the order is a finite chain. Likewise for $\RMk(\sg)$.
\end{example}

It follows from the definition of path that, for any object $X$ of an arboreal category, the poset $\Path{X}$ is a tree; in fact, a non-empty tree. Crucially, this assignment extends to a functor into the category of trees (for a proof, see~\cite[Theorem~3.11]{AR2022}):
\begin{theorem}
Let $\C$ be an arboreal category.
The assignment $f\mapsto \Path{f}$ in equation~\eqref{eq:Path-functor} induces a functor $\Path\colon \C\to\T$ into the category of trees.
\end{theorem}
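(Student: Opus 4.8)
The plan is to first verify that $f\mapsto \Path{f}$ is functorial at the level of posets, and then to upgrade this to a functor into $\T$ by showing that each $\Path{f}$ is a morphism of trees. Functoriality at the poset level is immediate from uniqueness of (quotient, embedding) factorisations: one has $\exists_{\id_X}=\id_{\Emb{X}}$, and $\exists_{g\circ f}=\exists_{g}\circ\exists_{f}$ because the factorisation of $g\circ f\circ m$ may be computed by factoring $f\circ m=m'\circ e$ and then $g\circ m'=m''\circ e'$, yielding $g\circ f\circ m=m''\circ(e'\circ e)$ with $e'\circ e$ a quotient and $m''$ an embedding (Lemma~\ref{l:factorisation-properties}\ref{compositions}). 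Restricting to path embeddings gives $\Path{\id_X}=\id$ and $\Path{g\circ f}=\Path{g}\circ\Path{f}$. Since each $\Path{f}$ is already known to be monotone, it remains to prove that $\Path{f}\colon \Path{X}\to\Path{Y}$ preserves the root of $\Path{X}$ and the covering relation.

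The key step is the assertion: \emph{if $e\colon P\epi Q$ is a quotient with $P$ and $Q$ paths, then $\exists_{e}\colon \Emb{P}\to\Emb{Q}$ is an order-isomorphism.} Since $\Emb{P}$ and $\Emb{Q}$ are finite chains, it suffices to prove that $\exists_{e}$ is a monotone bijection. For surjectivity, given an embedding $n\colon T\emb Q$, I would form the pullback of the cospan $P\xrightarrow{e}Q\xleftarrow{n}T$; by stability of the factorisation system this pullback exists, the projection onto $T$ is a quotient $\bar e$, and the projection $\bar n$ onto $P$ is an embedding (Lemma~\ref{l:factorisation-properties}\ref{pullbacks}), so $e\circ\bar n=n\circ\bar e$ is precisely the (quotient, embedding) factorisation of $e\circ\bar n$, whence $\exists_{e}(\bar n)=n$. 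For injectivity, since $\Emb{P}$ is a chain it suffices to exclude the existence of embeddings $m_1\colon S_1\emb P$ and $m_2\colon S_2\emb P$ with $m_1\trianglelefteq m_2$, $m_1\ne m_2$ and $\exists_{e}m_1=\exists_{e}m_2$. Writing $m_1=m_2\circ i$ with $i\colon S_1\emb S_2$ (Lemma~\ref{l:factorisation-properties}\ref{cancellation-m}) and letting $e\circ m_2=n\circ e_2$ be the factorisation, one factors $e_2\circ i=n'\circ e'$ and deduces from $\exists_{e}m_1=\exists_{e}m_2$ that $n'$ is an isomorphism, so that $e_2\circ i$ is a quotient. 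Now $S_1$, $S_2$ and $\dom n$ are paths (being $\M$-subobjects of paths, which are closed under embeddings \cite[Lemma~3.5]{AR2022}), and $e_2$ and $e_2\circ i$ are quotients; so the $2$-out-of-$3$ condition for paths (the Remark after the definition of arboreal category, equivalent to axiom~\ref{ax:2-out-of-3}) forces $i$ to be a quotient, hence an isomorphism (Lemma~\ref{l:factorisation-properties}\ref{isos}), contradicting $m_1\ne m_2$.

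Granting the key step, fix $f\colon X\to Y$ and a path embedding $m\colon P\emb X$, and let $e\colon P\epi \exists_f P$ and $\exists_f m\colon \exists_f P\emb Y$ be the quotient and embedding parts of $f\circ m$, noting that $\exists_f P$ is a path. By Lemma~\ref{l:emb-quo-order-embeddings}, $\exists_{m}$ and $\exists_{\exists_f m}$ are order-isomorphisms of $\Emb{P}$ onto $\down m\subseteq\Path{X}$ and of $\Emb{\exists_f P}$ onto $\down\Path{f}(m)\subseteq\Path{Y}$ respectively (their images are exactly these principal down-sets, every member of which is a path embedding since its domain embeds into $P$, resp.~$\exists_f P$; this is also why each such down-set is a finite chain, so that $\Path{X}$ is indeed a forest). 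Using $(\exists_f m)\circ e=f\circ m$ together with the composition law $\exists_{g\circ h}=\exists_{g}\circ\exists_{h}$, one checks directly that $\Path{f}$ maps $\down m$ into $\down\Path{f}(m)$, and that the square with top edge $\exists_{e}\colon \Emb{P}\to\Emb{\exists_f P}$, bottom edge the restriction $\Path{f}|_{\down m}\colon \down m\to\down\Path{f}(m)$, left edge $\exists_{m}$ and right edge $\exists_{\exists_f m}$ is commutative; hence by the key step $\Path{f}$ restricts to an order-isomorphism $\down m\xrightarrow{\ \sim\ }\down\Path{f}(m)$, for every $m\in\Path{X}$. Since $\Path{X}$ and $\Path{Y}$ are non-empty trees, this gives both required properties: taking $m$ to be the least element of $\Path{X}$, whose down-set is a singleton, shows that $\Path{f}$ preserves the root; and taking the down-set of $m'$ when $m\cvr m'$, since order-isomorphisms preserve the covering relation, shows $\Path{f}(m)\cvr\Path{f}(m')$. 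Therefore $\Path{f}$ is a morphism of $\T$, and with the first paragraph $\Path$ is a functor.

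The main obstacle I anticipate is the injectivity half of the key step: this is the place where arboreality is genuinely needed, the $2$-out-of-$3$ condition restricted to paths being what forces the comparison embedding between the two witnessing subobjects to be an isomorphism. Surjectivity rests on stability of the factorisation system, and the remainder of the argument is bookkeeping with factorisations and the definition of path.
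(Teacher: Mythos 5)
Your argument is correct, and it is essentially self-contained given the facts the paper records before the theorem; note that the paper itself does not prove this statement but defers it to \cite[Theorem~3.10]{AR2022}, so the comparison is really with that external proof rather than with anything in the text. The heart of your proof—the claim that a quotient $e\colon P\epi Q$ between paths induces an order-isomorphism $\exists_e\colon\Emb{P}\to\Emb{Q}$—is sound exactly as you say: surjectivity uses stability of the factorisation system together with Lemma~\ref{l:factorisation-properties}\ref{pullbacks}, and injectivity is where arboreality enters, via axiom~\ref{ax:2-out-of-3} (indeed you only need the one-directional form: $e_2\circ i$ a quotient already forces $i$ to be a quotient, so invoking the full 2-out-of-3 is slightly more than necessary), whence $i$ is an isomorphism by Lemma~\ref{l:factorisation-properties}\ref{isos}. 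The transport step is also correct: the square $\exists_{\exists_f m}\circ\exists_e=\Path{f}|_{\down m}\circ\exists_m$ is just the composition law $\exists_f\circ\exists_m=\exists_{\exists_f m}\circ\exists_e$ applied to the factorisation $f\circ m=(\exists_f m)\circ e$, and since $\exists_m$ and $\exists_{\exists_f m}$ are order-isomorphisms onto the principal down-sets (paths being closed under embeddings, so these down-sets in $\Emb{X}$ consist of path embeddings), you get that $\Path{f}$ restricts to an isomorphism $\down m\to\down\Path{f}(m)$; preservation of the root and of the covering relation then follow as you argue, using that down-sets are down-closed so covers computed there agree with covers in $\Path{X}$, and using the paper's preceding observation that $\Path{X}$ is a non-empty tree (unique root) rather than merely a forest. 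Functoriality from uniqueness of (quotient, embedding) factorisations is standard and fine. In short: a complete and correct proof, with the only genuinely arboreal input isolated exactly where you anticipated.
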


Finally, recall from~\cite[Lemma~3.15, Proposition~5.6 and Remark~5.7]{AR2022} the following properties of paths and posets of embeddings. 
\begin{lemma}\label{l:arboreal:properties}
The following statements hold in any arboreal category $\C$:
\begin{enumerate}[label=(\alph*)]
\item\label{at-most-one-emb} Between any two paths there is at most one embedding.
\item\label{SX-complete-lattice} For all objects $X$ of $\C$, the poset $\Emb{X}$ of its $\M$-subobjects is a complete lattice.\footnote{In fact, $\Emb{X}$ is a \emph{perfect} lattice, \cf \cite{Raney1952} or~\cite{DP2002}.}
\item\label{join-irred} Let $X$ be an object of $\C$ and let $\U\subseteq \Path{X}$ be a non-empty subset. A path embedding $m\in\Path{X}$ is below $\bigvee{\U}\in\Emb{X}$ if, and only if, it is below some element~of~$\U$.
\end{enumerate}
\end{lemma}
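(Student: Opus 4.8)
These statements are recorded in \cite[\S 5]{AR2022}, and the plan is to recover them in the order (a), (b), (c), each building on its predecessors. For~(a): if $Q$ is a path then $\Emb{Q}$ is a finite chain, so its sub-poset $\Path{Q}$ is a finite chain too; hence any two path embeddings with codomain $Q$ are comparable under $\trianglelefteq$. Given embeddings $f,g\colon P\emb Q$ between paths, I would therefore assume there is an embedding $i\colon P\to P$ with $f=g\circ i$, and then observe via Lemma~\ref{l:emb-quo-order-embeddings} that $\exists_i\colon\Emb{P}\to\Emb{P}$ is an order-embedding of a finite poset into itself, hence a bijection, hence the identity; in particular $[i]=\exists_i([\id_P])=[\id_P]$, so $i$ is an isomorphism and $[f]=[g]$ in $\Emb{Q}$. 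The remaining, more delicate, task is to show that paths are \emph{rigid}, \iec that the only endomorphism of a path is the identity: I would argue by induction on the height, the base case (the initial object) being trivial, and the inductive step using that such an endomorphism must, by the previous step, fix the unique maximal proper path embedding, and then invoking connectedness (axiom~\ref{ax:connected}) to force it to be the identity. Rigidity then gives $i=\id_P$, whence $f=g$.

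For~(b), the key point I would establish first is that arbitrary joins of \emph{sets of path embeddings} exist in $\Emb{X}$. Given path embeddings $\{m_i\colon P_i\emb X\}_{i\in I}$, I would form the coproduct $\coprod_{i\in I}P_i$ (available by axiom~\ref{ax:colimits}) and factor the induced morphism $\coprod_{i\in I}P_i\to X$ as $\coprod_{i\in I}P_i\epi T\emb X$. A diagram chase with the lifting property of $(\Q,\M)$ then shows that $[T\emb X]$ is the least upper bound of $\{m_i\}$: it bounds each $m_i$ because $m_i$ factors through $T\emb X$, and it is least because any embedding $n\colon R\emb X$ bounding all the $m_i$ receives a morphism from $\coprod_{i\in I}P_i$ that lifts through the quotient $\coprod_{i\in I}P_i\epi T$. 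Next I would use path-generatedness (axiom~\ref{ax:path-generated}), applied to an arbitrary $\M$-subobject $S\emb X$, to show that every $m\in\Emb{X}$ is the join in $\Emb{X}$ of the path embeddings below it (namely the image $\exists_m(\Path{S})$). Combining these two facts, every family in $\Emb{X}$ has a join (write each member as such a join, and take the join of the union), and since $\Emb{X}$ also has a top element $[\id_X]$, it is a complete lattice.

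For~(c), the ``if'' direction is immediate since $u\le\bigvee\U$ for each $u\in\U$. For ``only if'', I would write $\bigvee\U=[T\emb X]$ with $\coprod_{u\in\U}P_u\epi T\emb X$ as in~(b), where $P_u=\dom u$, and given a path embedding $m\colon P\emb X$ with $m\le[T\emb X]$, factor $m=(T\emb X)\circ j$ with $j\colon P\emb T$. Pulling the quotient $\coprod_{u\in\U}P_u\epi T$ back along $j$, using stability of the factorisation system and the fact that pullbacks distribute over coproducts, produces a quotient $\coprod_{u\in\U}(P\times_T P_u)\epi P$. Since $P$ is a path it is connected (axiom~\ref{ax:connected}), and a connectedness argument then shows that $P$ maps, compatibly over $X$, into a single summand $P\times_T P_{u_0}$; composing with the projection to $P_{u_0}$ exhibits $m$ as factoring through $u_0\colon P_{u_0}\emb X$, so $m\le u_0$.

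The steps I expect to be the main obstacles are, for~(b), the claim that every $\M$-subobject equals the join of the path embeddings it contains: the natural argument reduces this to showing that a certain epic embedding is an isomorphism, which is \emph{not} a formal consequence of the factorisation-system axioms---for instance, in $\mathbf{Ring}$ with (surjections, injections) the inclusion $\mathbb{Z}\emb\mathbb{Q}$ is an epic embedding that is not an isomorphism---so one genuinely needs path-generatedness here. For~(a) and~(c) the obstacles are, respectively, the rigidity of paths and the step that converts connectedness (a constraint on morphisms \emph{out of} a path) into control over the quotient obtained by pullback; it is precisely at these points that the arboreal axioms must be used in combination rather than any single formal property of proper factorisation systems.
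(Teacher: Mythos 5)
The paper does not actually prove this lemma (it is quoted from \cite[\S 5]{AR2022}), so I am judging your argument on its own merits. Part~(b) of your proposal is essentially correct: constructing joins of sets of path embeddings via the (quotient, embedding) factorisation of the induced map $\coprod_i P_i\to X$, using the lifting property of $\Q$ against $\M$ for leastness, and then using path-generatedness of the domain of an arbitrary $\M$-subobject $S\emb X$ to exhibit it as the join of the path embeddings below it, does yield all joins and hence completeness; you also correctly identified why path-generation (and not just the factorisation axioms) is needed there. The genuine gap is in~(a). The reduction to rigidity of paths is fine, but rigidity is where all the difficulty lies, and your inductive step --- ``the endomorphism fixes the unique maximal proper path embedding, and then connectedness forces it to be the identity'' --- is a non sequitur: connectedness only constrains morphisms \emph{out of} the path into coproducts of paths, and no such morphism appears in your situation. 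Concretely, in the category of $\mathbb{Z}/2$-sets with the (surjective, injective) factorisation system, the regular $\mathbb{Z}/2$-set $P$ has $\Emb{P}$ a two-element chain, so it is a path whose only proper $\M$-subobject is the initial object (hence fixed by every endomorphism); it is connected in the sense of the definition (its image in any coproduct lies in a single summand); yet it has a non-identity automorphism. So the two facts you invoke cannot imply rigidity, and this step needs a genuinely different input. (A smaller issue: the base case of your induction need not be the initial object, since a path with a one-element subobject poset need not be initial; this one is fixable, because for such a path the unique map from the initial object is a quotient, hence epic.)

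Part~(c) also has a gap at its crucial step. Writing $\bigvee\U=[T\emb X]$ with $\coprod_u P_u\epi T\emb X$ and pulling the quotient back along $j\colon P\emb T$ (which stability does license) is the right move, but: (i) the pullbacks $P\times_T P_u$ and the identification $P\times_T\coprod_u P_u\cong\coprod_u(P\times_T P_u)$ are not available --- the axioms only guarantee pullbacks of quotients along embeddings, and nothing makes coproducts stable under pullback; and (ii) even granting that decomposition, you cannot apply connectedness of $P$, because what the construction produces is a quotient \emph{onto} $P$, not a morphism from $P$ into a coproduct of paths; to get the map you assert (``$P$ maps into a single summand'') you would need a section of that quotient, which does not exist in general. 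A correct argument runs instead as follows: let $e\colon Z\epi P$ be the pullback of $\coprod_u P_u\epi T$ along $j$; use path-generatedness of $Z$ together with part~(b) and the fact that $\Emb{P}$ is a finite chain to find a single path embedding $R\emb Z$ whose composite with $e$ is a quotient $R\epi P$; apply connectedness to the composite $R\to Z\to\coprod_u P_u$ to factor it through some injection $P_{u_0}\to\coprod_u P_u$; and finally lift in the commutative square formed by the quotient $R\epi P$ and the embedding $u_0\colon P_{u_0}\emb X$ to conclude $m\leq u_0$. As written, your connectedness step is applied to a morphism that does not exist, so neither (a) nor (c) is established by the proposal.
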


If it exists, the unique embedding between paths $P,Q$ in an arboreal category is denoted by 
\[
!_{P,Q}\colon P\emb Q.
\] 
If no confusion arises, we simply write $!\colon P\emb Q$. 

\subsection{Bisimilarity and back-and-forth systems}
Throughout this section, we fix an arbitrary arboreal category $\C$.
A morphism $f\colon X\to Y$ in $\C$ is said to be \emph{open} if it satisfies the following path-lifting property: Given any commutative square
\[\begin{tikzcd}
P \arrow[rightarrowtail]{r} \arrow[rightarrowtail]{d} & X \arrow{d}{f} \arrow[dashed,leftarrow]{dl} \\
Q \arrow[rightarrowtail]{r} & Y
\end{tikzcd}\]
with $P,Q$ paths, there is an arrow $Q\to X$ making the two triangles commute. (If such an arrow exists, it is automatically an embedding.)
Further, $f$ is a \emph{pathwise embedding} if, for all path embeddings $m\colon P\emb X$, the composite $f\circ m\colon P \to Y$ is a path embedding.

Combining these notions, we can define a bisimilarity relation between objects of $\C$:

\begin{definition}
Two objects $X,Y$ of $\C$ are said to be \emph{bisimilar} if there exist an object $Z$ of $\C$ and  a span of open pathwise embeddings $X\leftarrow Z \rightarrow Y$.
\end{definition}

\begin{remark}
The definition of open morphism given above is a refinement of the one introduced in~\cite{JNW1993} (\cf \cite[\S 4.1]{AR2022} for a discussion of the relation between these notions), which is a special case of the (axiomatic) concept of open map in toposes~\cite{JM1994}. 
\end{remark}

As we shall see next, if $\C$ has binary products the bisimilarity relation can be characterised in terms of back-and-forth systems. Let $X,Y$ be objects of $\C$. Given $m\in\Path{X}$ and $n\in\Path{Y}$, we write $\br{m,n}$ to indicate that $\dom(m)\cong \dom(n)$. 
Intuitively, the pair $\br{m,n}$ encodes a partial isomorphisms between $X$ and $Y$ ``of shape $P$'', with $P$ a path. 

\begin{definition}\label{def:back-and-forth}
A \emph{back-and-forth system} between objects $X$ and $Y$ of $\C$ is a set 
\[
\B=\{\br{m_i,n_i}\mid m_i\in\Path{X}, \, n_i\in\Path{Y}, \, i\in I\}
\] 
satisfying the following conditions:
\begin{enumerate}[label=(\roman*)]
\item\label{initial} $\br{\bot_X,\bot_Y}\in\B$, where $\bot_X,\bot_Y$ are the roots of $\Path{X}$ and $\Path{Y}$, respectively.
\item\label{forth} If $\br{m,n}\in\B$ and $m'\in\Path{X}$ are such that $m\cvr m'$, there exists $n'\in\Path{Y}$ satisfying $n\cvr n'$ and $\br{m',n'}\in\B$.
\item\label{back} If $\br{m,n}\in\B$ and $n'\in\Path{Y}$ are such that $n\cvr n'$, there exists $m'\in\Path{X}$ satisfying $m\cvr m'$ and $\br{m',n'}\in\B$.
\end{enumerate}
Two objects $X$ and $Y$ of $\C$ are said to be \emph{back-and-forth equivalent} if there exists a back-and-forth system between them.
\end{definition}

For a proof of the following result, see~\cite[Theorem~6.4]{AR2022}.

\begin{theorem}\label{th:bisimilar-iff-strong-back-forth}
Let $X,Y$ be objects of an arboreal category admitting a product. Then $X$ and $Y$ are bisimilar if, and only if, they are back-and-forth equivalent.
\end{theorem}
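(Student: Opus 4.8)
The plan is to prove the two implications separately; only ``back-and-forth equivalent $\Rightarrow$ bisimilar'' uses the product, which furnishes an ambient object $X\times Y$ in which to build the witnessing span.

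\emph{Bisimilar $\Rightarrow$ back-and-forth equivalent.} Suppose $X\xleftarrow{p}Z\xrightarrow{q}Y$ is a span of open pathwise embeddings. Since $p$ and $q$ are pathwise embeddings, for every $r\in\Path{Z}$ the composites $p\circ r$ and $q\circ r$ are path embeddings with common domain $\dom(r)$, so I would set
\[
\B \coloneqq \{\br{p\circ r,\, q\circ r}\mid r\in\Path{Z}\}
\]
and check the conditions of Definition~\ref{def:back-and-forth}. Condition~\ref{initial} holds because $\Path{p}$ and $\Path{q}$ are morphisms of trees, hence preserve roots. For~\ref{forth}, take $\br{m,n}=\br{p\circ r,q\circ r}\in\B$ and $m\cvr m'$ in $\Path{X}$, and write $m=m'\circ c$ with $c\colon\dom(m)\emb\dom(m')$ the covering embedding. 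Applying openness of $p$ to the commuting square with top $c$, left leg $r$, right leg $m'$, and bottom $p$ yields an embedding $d\colon\dom(m')\emb Z$ with $d\circ c=r$ and $p\circ d=m'$. Then $r\cvr d$ in $\Path{Z}$ — because $\Path{p}$ restricts to an order-isomorphism from $\down d$ onto $\down m'$, both identified with $\Emb{\dom(m')}$ via the maps $\exists_{(-)}$ — so $n=q\circ r\cvr q\circ d$ in $\Path{Y}$ since $\Path{q}$ preserves coverings, and $\br{m',q\circ d}\in\B$. Condition~\ref{back} is symmetric, via openness of $q$.

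\emph{Back-and-forth equivalent $\Rightarrow$ bisimilar.} Fix a back-and-forth system $\B$ and a product $X\times Y$ with projections $\pi_X,\pi_Y$. For each $\br{m,n}\in\B$ choose an isomorphism $\theta_{m,n}\colon\dom(m)\xrightarrow{\sim}\dom(n)$ (paths have no non-trivial automorphisms, by Lemma~\ref{l:arboreal:properties}\ref{at-most-one-emb}), and set $e_{m,n}\coloneqq\langle m,\,n\circ\theta_{m,n}\rangle\colon\dom(m)\to X\times Y$; since $\pi_X\circ e_{m,n}=m$ is an embedding, the quotient part of the factorisation of $e_{m,n}$ is invertible, so $e_{m,n}$ is a path embedding. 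Let $z\colon Z\emb X\times Y$ represent $\bigvee_{\br{m,n}\in\B}e_{m,n}$ in the complete lattice $\Emb{X\times Y}$ (Lemma~\ref{l:arboreal:properties}\ref{SX-complete-lattice}), and put $p\coloneqq\pi_X\circ z$, $q\coloneqq\pi_Y\circ z$. By Lemma~\ref{l:arboreal:properties}\ref{join-irred}, every path embedding into $Z$ corresponds under $z$ to one of the form $e_{m,n}\circ i$ with $i$ an embedding into $\dom(m)$, whose $X$- and $Y$-projections are $m\circ i$ and $n\circ\theta_{m,n}\circ i$; these are path embeddings, so $p$ and $q$ are pathwise embeddings. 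The remaining task — showing $p$ and $q$ open — reduces, by factoring an embedding of paths into a finite chain of coverings (keeping intermediate lifts embeddings via Lemma~\ref{l:factorisation-properties}\ref{cancellation-m}), to the one-step case: given a path embedding $r\colon R\emb Z$ with $z\circ r=e_{m,n}\circ i$ and a covering $p\circ r\cvr s'$ in $\Path{X}$, find $r\cvr r^{+}$ in $\Path{Z}$ with $p\circ r^{+}\sim s'$. If $s'\le m$ then, since $p\circ r=m\circ i\cvr s'\le m$, there is $i\cvr i^{+}$ with $m\circ i^{+}=s'$, and $e_{m,n}\circ i^{+}$ factors through $z$ and yields $r^{+}$. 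Otherwise $s'$ branches away from $m$ at $p\circ r$, and one instead needs a \emph{fresh} pair $\br{\mu,\nu}\in\B$ that restricts along $R$ to $\br{p\circ r,\,q\circ r}$ and satisfies $s'\le\mu$; then $e_{\mu,\nu}$ supplies the required one-step extension of $z\circ r$.

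The crux is producing this fresh pair. The back-and-forth conditions let us navigate \emph{upward} from the root — iterating~\ref{forth} along the chain $\down s'\subseteq\Path{X}$ — but the resulting pair need not restrict on the $Y$-side to the particular $q\circ r$ singled out by $r$. To fix this I would first pass to the largest back-and-forth system $\B_{\max}$ (the union of all back-and-forth systems between $X$ and $Y$ is again one) and show it is closed under restriction; the key point is that if $\br{m_0,n_0}$ is a restriction of $\br{m,n}\in\B$ and $m_0$ has a successor in $\Path{X}$ then $n_0$ has a successor in $\Path{Y}$ — so the forth move is never blocked — while shape matching is handled by transporting along $\theta_{m,n}$. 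With $\B_{\max}$ in hand the navigation can be carried out through its pairs directly, so both shapes and the $Y$-side restriction are controlled, and the fresh pair is obtained. Once $p,q$ are open, $X\xleftarrow{p}Z\xrightarrow{q}Y$ is a span of open pathwise embeddings and $X,Y$ are bisimilar. Beyond this point everything is routine bookkeeping with the factorisation system and with the order-isomorphisms $\exists_{(-)}$ on the posets $\Emb{-}$ and $\Path{-}$ recorded in Lemmas~\ref{l:emb-quo-order-embeddings} and~\ref{l:arboreal:properties}.
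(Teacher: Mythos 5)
The forward direction of your argument is fine, and your plan for the converse---pair each $\br{m,n}$ into a path embedding $e_{m,n}\colon\dom(m)\emb X\times Y$, take $z\colon Z\emb X\times Y$ to be the join of these in $\Emb{X\times Y}$, and check that the two composites with the projections are open pathwise embeddings---is a sound strategy (the paper itself gives no proof here but defers to \cite[Theorem~6.4]{AR2022}, so I am judging your argument on its own terms). The genuine gap is exactly at the step you call the crux, and the fix you propose does not work: it is \emph{not} true in general that the largest back-and-forth system $\B_{\max}$ is closed under paired restriction. Your justification only observes that if $m_0$ has a cover then $n_0$ has one, but condition~(ii) of Definition~\ref{def:back-and-forth} demands more: a cover $n'$ such that $\br{m',n'}$ again lies in the system, i.e.\ that Duplicator can keep playing from the new pair. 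To see that $\B_{\max}$ really can fail restriction-closure, note first that any pair of \emph{maximal} path embeddings of equal shape can be adjoined to an arbitrary back-and-forth system, since conditions (ii)--(iii) are vacuous at such a pair; hence all such ``leaf pairs'' lie in $\B_{\max}$. Now (say in the arboreal category of forests) one can arrange $X\cong Y$ so that systems exist, while some leaf pair restricts one level down to a pair $(u,v)$ in which $v$ has a second cover carrying a strictly longer branch than anything above $u$; then $(u,v)$ belongs to \emph{no} back-and-forth system (Spoiler wins by going deep along that extra branch, even though Duplicator's first reply is not ``blocked''), so $\B_{\max}$ omits a restriction of one of its members.

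Fortunately the repair is cheaper than the $\B_{\max}$ detour. Replace the given system $\B$ by the subset $\B'$ of pairs reachable from $\br{\bot_X,\bot_Y}$ by finite chains of simultaneous coverings inside $\B$. This $\B'$ is again a back-and-forth system (the root pair is reachable, and the pairs produced by (ii)/(iii) at a reachable pair are reachable), and it \emph{is} closed under paired restrictions, for the trivial reason that the intermediate positions of any reachability chain ending at $\br{m,n}$ are precisely the paired restrictions of $\br{m,n}$ at each height, $\down m$ and $\down n$ being chains. If you now build $Z$ as the join of the $e_{m,n}$ over $\br{m,n}\in\B'$ only, then every path embedding $r$ into $Z$ satisfies $z\circ r = e_{m,n}\circ i$ for some $\br{m,n}\in\B'$, the pair $\br{p\circ r,\,q\circ r}$ is exactly the paired restriction of $\br{m,n}$ at height $\htf(r)$ and hence lies in $\B'$, and applying (ii)/(iii) there yields the one-step extension you need; your rigidity observation (at most one embedding between paths, so no nontrivial automorphisms) is what guarantees that the two components of the extended pair assemble into a single subobject of $X\times Y$ extending $z\circ r$, so that the lift really restricts to $r$. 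With that replacement the rest of your outline goes through.
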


The existence of a back-and-forth system between~$X$ and~$Y$ can be equivalently described in terms of the existence of a Duplicator winning strategy in a two-player game played between~$\Path{X}$ and~$\Path{Y}$~\cite[\S 6.2]{AR2022}. Since winning strategies can be composed to yield again a winning strategy, in any arboreal category with binary products the bisimilarity relation is transitive, hence an equivalence relation.

\subsection{Resource-indexed arboreal adjunctions}\label{s:prelim-resource-ind-arb-adj}
Let $\C$ be an arboreal category, with full subcategory of paths $\Cp$. We say that $\C$ is \emph{resource-indexed} if for all positive integers $k$ there is a full subcategory $\Cp^k$ of $\Cp$ closed under embeddings\footnote{\label{fn:closure-emb}That is, for any embedding $P\emb Q$ in $\C$ with $P,Q$ paths, if $Q\in \Cp^k$ then also $P\in \Cp^k$. We shall further assume that each category $\Cp^k$ contains the initial object of $\C$.} with
\[ \Cp^1 \hookrightarrow \Cp^2 \hookrightarrow \Cp^3 \hookrightarrow \cdots \]
This induces a corresponding tower of full subcategories $\C_k$ of $\C$, with the objects of $\C_k$ those whose cocone of path embeddings with domain in $\Cp^k$ is a colimit cocone in $\C$.
It turns out that each category $\C_k$ is arboreal. Furthermore, the paths in $\C_k$ are precisely the objects of $\Cp^k$, \ie $(\C_k)_p = \Cp^k$. \Cf \cite[Proposition~7.7]{AR2022} and its proof.

\begin{example}\label{ex:resource-ind-arb-cat}
Consider the arboreal category $\RT(\sg)$ from Example~\ref{ex:RE}. This can be regarded as a resource-indexed arboreal category by taking as $\Cp^k$ the full subcategory of $\RT(\sg)$ consisting of the objects in which the order is a finite chain of cardinality $\leq k$. The generated subcategory $\C_k$ then coincides with $\RTk(\sg)$. 

Similar reasoning shows that the arboreal category $\RM(\sg)$ from Example~\ref{ex:RM} can also be regarded as a resource-indexed arboreal category.
\end{example}

\begin{definition}
Let $\{\C_k\}$ be a resource-indexed arboreal category and let $\E$ be a category. A \emph{resource-indexed arboreal adjunction} between $\E$ and $\C$ is an indexed family of adjunctions
\[ \begin{tikzcd}
\C_k \arrow[r, bend left=25, ""{name=U, below}, "L_k"{above}]
\arrow[r, leftarrow, bend right=25, ""{name=D}, "R_k"{below}]
& \E.
\arrow[phantom, "\textnormal{\footnotesize{$\bot$}}", from=U, to=D] 
\end{tikzcd}
\]
A \emph{resource-indexed arboreal cover} of $\E$ by $\C$ is a resource-indexed arboreal adjunction between $\E$ and $\C$ such that all adjunctions $L_k\dashv R_k$ are comonadic, \ie for all $k>0$ the comparison functor from $\C_k$ to the category of Eilenberg-Moore coalgebras for the comonad $G_k\coloneqq L_k R_k$ is an isomorphism.
\end{definition}
\begin{example}\label{ex:res-ind-arb-cover}
Let $\sg$ be a relational vocabulary and let $\E=\CS$. Consider the resource-indexed arboreal category $\RT(\sg)$ in Example~\ref{ex:resource-ind-arb-cat}. For each $k > 0$, there is a forgetful functor
\[ 
\LE_k\colon \RTk(\sg) \to \CS
\]
which forgets the forest order. This functor is comonadic. The right adjoint $\RE_k$ sends a $\sg$-structure $\As$ to $\Ek(\As)$ equipped with the prefix order, and the comonad induced by this adjunction coincides with the $k$-round Ehrenfeucht-\Fraisse~comonad $\Ek$. This gives rise to a resource-indexed arboreal cover of $\CS$ by $\RT(\sg)$.

Similarly, if $\sg$ is a modal vocabulary, there is a resource-indexed arboreal cover of $\CSstar$ by $\RM(\sg)$ such that each adjunction $L^M_k \dashv R^M_k$ induces the $k$-round modal comonad $\Mk$.
\end{example}

\begin{example}\label{ex:res-ind-arb-adj}
To deal with the equality symbol in the logic, we consider resource-indexed arboreal adjunctions constructed as follows.
Let 
\[\sg^I\coloneqq \sg\cup \{I\} 
\]
be the vocabulary obtained by adding a fresh binary relation symbol~$I$ to~$\sg$. 
Any $\sg$-structure can be expanded to a $\sg^I$-structure by interpreting $I$ as the identity relation. This yields a fully faithful functor $J\colon \CS\to \CSplus$. The functor $J$ has a left adjoint $H\colon \CSplus\to \CS$ which sends a $\sg^I$-structure $\As$ to the quotient of the $\sg$-reduct of $\As$ by the equivalence relation generated by $I^{\As}$ \cite[Lemma~25]{DJR2021}. We can then compose the adjunction $H\dashv J$ with \eg the Ehrenfeucht-\Fraisse~resource-indexed arboreal cover of $\CSplus$ by $\RT(\sg^I)$ from Example~\ref{ex:res-ind-arb-cover}.
\begin{equation*}
\begin{tikzcd}
{\RTk(\sg^I)} \arrow[r, bend left=25, ""{name=U, below}, "\LE_k"{above}]
\arrow[r, leftarrow, bend right=25, ""{name=D}, "\RE_k"{below}]
& {\CSplus} \arrow[r, bend left=25, ""{name=U', below}, "H"{above}]
\arrow[r, leftarrow, bend right=25, ""{name=D'}, "J"{below}] & {\CS}
\arrow[phantom, "\textnormal{\footnotesize{$\bot$}}", from=U, to=D] 
\arrow[phantom, "\textnormal{\footnotesize{$\bot$}}", from=U', to=D'] 
\end{tikzcd}
\end{equation*}
The composite adjunctions $H \LE_k \dashv \RE_k J$, which are not comonadic, yield the \emph{Ehrenfeucht-\Fraisse~resource-indexed arboreal adjunction} between $\CS$ and $\RT(\sg^I)$.
\end{example}
Crucially, a resource-indexed arboreal adjunction between $\E$ and $\C$ can be used to define several resource-indexed relations between objects of $\E$:
\begin{definition}\label{def:resource-indexed-relations}
Consider a resource-indexed arboreal adjunction between $\E$ and $\C$, with adjunctions $L_k \dashv R_k$, and any two objects $a,b$ of $\E$. For all $k>0$, we define:
\begin{itemize}
    \item $a \rightarrow_k^{\C} b$ if there exists a morphism $R_k a \to R_k b$ in $\C_k$.
    \item $a \eqbCk b$ if $R_k a$ and $R_k b$ are bisimilar in $\C_k$.
    \item $a \eqcCk b$ if $R_k a$ and $R_k b$ are isomorphic in $\C_k$.
\end{itemize}
\end{definition}
Further, we write $\eqaCk$ for the symmetrisation of the preorder $\rightarrow_k^{\C}$. There are inclusions
\[
{\eqcCk} \ \subseteq \ {\eqbCk} \ \subseteq \ {\eqaCk}.
\]
The first inclusion is trivial, the second follows from \cite[Lemma~6.20]{AR2022}. For a proof of the following easy observation, see~\cite[Lemma~7.20]{AR2022}.

\begin{lemma}\label{l:equiv-rel-properties}
Consider a resource-indexed arboreal adjunction between $\E$ and~$\C$, with adjunctions $L_k \dashv R_k$. The following hold for all $a,b\in \E$ and all $k > 0$:
\begin{enumerate}[label=(\alph*)]
\item\label{hom-k-hom} If there exists a morphism $a\to b$ in $\E$ then $a \rightarrow_k^{\C} b$.
\item\label{k-equiv} $a\eqaCk L_k R_k a$.
\end{enumerate}
\end{lemma}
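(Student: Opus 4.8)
The plan is to prove the two items of Lemma~\ref{l:equiv-rel-properties} by unwinding the definitions of the relation $\rightarrow_k^{\C}$ and of bisimilarity, and by exploiting the adjunction $L_k\dashv R_k$.

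For part~\ref{hom-k-hom}, suppose there is a morphism $g\colon a\to b$ in $\E$. Since $R_k\colon \E\to\C_k$ is a functor, applying it yields a morphism $R_k g\colon R_k a\to R_k b$ in $\C_k$, which is exactly what $a\rightarrow_k^{\C} b$ asserts. This step is immediate; the only thing to note is that $R_k a$ and $R_k b$ are genuinely objects of $\C_k$ (not merely of $\C$), but this is built into the definition of a resource-indexed arboreal adjunction, so nothing more is needed.

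For part~\ref{k-equiv}, the claim is that $a$ and $L_k R_k a$ are equivalent under the symmetrised preorder $\eqaCk$, \iec that both $a\rightarrow_k^{\C} L_k R_k a$ and $L_k R_k a\rightarrow_k^{\C} a$ hold. Unravelling the definition, I must produce morphisms $R_k a\to R_k L_k R_k a$ and $R_k L_k R_k a\to R_k a$ in $\C_k$. The natural candidates come from the (co)unit of the adjunction $L_k\dashv R_k$: writing $\eta\colon \id_{\E}\Rightarrow R_k L_k$ for the unit and $\epsilon\colon L_k R_k\Rightarrow \id_{\E}$ for the counit, the component $\eta_{R_k a}\colon R_k a\to R_k L_k R_k a$ gives the first morphism, and $R_k\epsilon_a\colon R_k L_k R_k a\to R_k a$ gives the second. (Alternatively one may first observe, using part~\ref{hom-k-hom} applied to $\epsilon_a\colon L_k R_k a\to a$, that $L_k R_k a\rightarrow_k^{\C} a$, and then supply only $\eta_{R_k a}$ for the reverse direction.) Thus $a\eqaCk L_k R_k a$.

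I do not anticipate any genuine obstacle here: the lemma is flagged as ``easy'' in the text, and both parts reduce to functoriality of $R_k$ together with the triangle of the adjunction. The one point worth stating carefully is that the morphisms exhibited actually live in the subcategory $\C_k$ rather than in the ambient $\C$; this is automatic because $R_k$ has codomain $\C_k$ and $\C_k$ is a (full) subcategory, so any morphism between objects of $\C_k$ produced by applying $R_k$ or by composing such is a morphism of $\C_k$.
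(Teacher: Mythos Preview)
Your argument is correct and is the expected one; the paper itself does not give a proof but defers to \cite[Lemma~7.18]{AR2022}, and what you wrote is precisely the standard verification via functoriality of $R_k$ and the unit/counit of $L_k\dashv R_k$. One small slip: the unit should be written $\eta\colon \id_{\C_k}\Rightarrow R_k L_k$ (an endotransformation on $\C_k$), not $\id_{\E}\Rightarrow R_k L_k$; the component $\eta_{R_k a}$ you actually use is correct, so this is purely a notational fix.
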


To conclude, we recall how the relations in Definition~\ref{def:resource-indexed-relations} capture, in our running examples, preservation of the logics introduced at the beginning of Section~\ref{s:prelim-game-comonads}.
Given a set of sentences (or modal formulas) $\LL$, let $\Rrightarrow^\LL$ be the preorder on (pointed) $\sg$-structures given by
\[
\As \Rrightarrow^{\LL} \Bs \ \ \Longleftrightarrow \ \ \forall \phi\in\LL. \, (\As\vDash \phi \, \Rightarrow \, \Bs\vDash \phi).
\]
The equivalence relation $\equiv^\LL$ is the symmetrisation of $\Rrightarrow^\LL$.
\begin{example}\label{logical-rel-EF}
Let $\sg$ be a finite relational vocabulary.
Consider the Ehrenfeucht-\Fraisse~resource-indexed arboreal adjunction between $\CS$ and $\RT(\sg^I)$ in Example~\ref{ex:res-ind-arb-adj}, and write $\rightarrow_k^{E}$ and $\leftrightarrow_k^E$ for the relations on $\CS$ induced according to Definition~\ref{def:resource-indexed-relations}. 
For all $\sg$-structures $\As,\Bs$ and all $k>0$, we have
\[
\As \rightarrow_k^{E} \Bs \ \Longleftrightarrow \ \As \Rrightarrow^{\EFO_k} \Bs
\]
and 
\[
\As \leftrightarrow_k^{E} \Bs \ \Longleftrightarrow \ \As \equiv^{\FO_k} \Bs.
\]
\Cf \cite[Theorems~3.2 and~5.1]{AS2021} and~\cite[Theorem~10.5]{AS2021}, respectively. We also mention that $\cong_k^E$ coincides with equivalence in the extension of $\FO_k$ with \emph{counting quantifiers} \cite[Theorem~5.3(2)]{AS2021}, although we shall not need this fact.
\end{example}

\begin{example}\label{logical-rel-modal}
Suppose $\sg$ is a finite modal vocabulary and consider the relations $\rightarrow_k^{M}$ and $\cong_k^M$ on $\CSstar$ induced by the modal resource-indexed arboreal cover of $\CSstar$ by $\RMk(\sg)$ in Example~\ref{ex:res-ind-arb-cover}. For all pointed Kripke structures $(\As,a),(\Bs,b)$ and all $k>0$, we have
\[
(\As,a)\rightarrow_k^{M} (\Bs,b) \ \Longleftrightarrow \ (\As,a)\Rrightarrow^{\exists^+\ML_k} (\Bs,b),
\]
see \cite[Theorem~9]{DBLP:conf/csl/AbramskyS18}. 
Furthermore, 
\[
(\As,a)\cong_k^M (\Bs,b) \ \Longrightarrow \ (\As,a)\equiv^{\ML_k(\#)} (\Bs,b),
\]
\cf \cite[Proposition~15]{DBLP:conf/csl/AbramskyS18} and~\cite[Proposition~3.6]{deRijke2000}. We mention in passing that the relation $\leftrightarrow_k^M$ coincides with equivalence in $\ML_k$ \cite[Theorem~10.13]{AS2021}.
\end{example}

\section{Homomorphism preservation theorems}\label{s:logics-HPTs}
%
%
In this section, we recast the statement of a generic equi-resource homomorphism preservation theorem into a property \textnormal{(HP)}---and its strengthening \textnormal{(HP${}^\#$)}---that a resource-indexed arboreal adjunction may or may not satisfy. Sufficient conditions under which properties \textnormal{(HP)} and \textnormal{(HP${}^\#$)} hold are provided in Section~\ref{s:exploring-the-landscape}.
\subsection{\textnormal{(HP)} and \textnormal{(HP${}^\#$)}}\label{s:HP-HPplus}
Given a first-order sentence $\phi$ in a relational vocabulary~$\sg$, its ``model class'' $\Mod(\phi)$ is the full subcategory of $\CS$ defined by the $\sg$-structures $\As$ such that $\As\vDash\phi$.
To motivate the formulation of properties \textnormal{(HP)} and \textnormal{(HP${}^\#$)}, we recall a well-known characterisation of model classes of sentences in $\FO_k$, \ie first-order sentences of quantifier rank at most $k$, and in its existential positive fragment $\EFO_k$. Since a sentence can only contain finitely many relation symbols, for the purpose of investigating homomorphism preservation theorems we can safely assume that $\sg$ is finite.

For a full subcategory $\D$ of a category $\A$, and a relation $\nabla$ on the class of objects of $\A$, we say that $\D$ is \emph{upwards closed (in $\A$) with respect to $\nabla$} if 
\[
\forall a,b \in \A, \text{ if } a\in\D \text{ and } a \, \nabla \, b \text{ then } b\in\D.
\] 
If $\nabla$ is an equivalence relation and the latter condition is satisfied, we say that $\D$ is \emph{saturated under $\nabla$}.
The following lemma follows from the fact that, for all $k \geq 0$, there are finitely many sentences in $\FO_k$ up to logical equivalence. \Cf \eg \cite[Lemma~3.13]{Libkin2004}.
\begin{lemma}\label{concrete-charact-log-eq}
The following hold for all $k\geq 0$ and all full subcategories $\D$ of $\CS$:
\begin{enumerate}[label=(\alph*)]
\item\label{synt-free} $\D=\Mod(\phi)$ for some $\phi\in\FO_k$ if, and only if, $\D$ is saturated under $\equiv^{\FO_k}$.
\item\label{synt-free-EP} $\D=\Mod(\psi)$ for some $\psi\in\EFO_k$ if, and only if, $\D$ is upwards closed with respect to $\Rrightarrow^{\EFO_k}$.
\end{enumerate}
\end{lemma}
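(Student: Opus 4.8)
The plan is to deduce both statements from the quoted fact that, over our fixed finite relational vocabulary $\sg$, there are only finitely many $\FO_k$-sentences up to logical equivalence; since $\EFO_k\subseteq\FO_k$, the same holds for $\EFO_k$. First I would fix finite lists $\chi_1,\dots,\chi_n\in\FO_k$ and $\theta_1,\dots,\theta_m\in\EFO_k$ of representatives for the respective logical-equivalence classes, using the usual convention that an empty conjunction denotes $\top$ and an empty disjunction denotes $\bot$, both of quantifier rank~$0$ (so $\top,\bot\in\FO_k$, and $\bot\in\EFO_k$). The ``only if'' directions are then immediate unpackings of the definitions: if $\D=\Mod(\phi)$ with $\phi\in\FO_k$, then $\As\in\D$ and $\As\equiv^{\FO_k}\Bs$ give $\Bs\vDash\phi$, hence $\Bs\in\D$, which (as $\equiv^{\FO_k}$ is an equivalence relation) is saturation; symmetrically for $\EFO_k$ and $\Rrightarrow^{\EFO_k}$, where now no symmetry of the relation is needed and one obtains upward closure.

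For the ``if'' direction of \ref{synt-free}, to each $\sg$-structure $\As$ I would attach its Hintikka-style description $\phi_{\As}\coloneqq\bigwedge_{\As\vDash\chi_i}\chi_i \wedge \bigwedge_{\As\not\vDash\chi_i}\neg\chi_i$. Since negation and finite conjunction leave the quantifier rank unchanged, $\phi_{\As}\in\FO_k$; and because two $\sg$-structures satisfy the same $\chi_i$'s exactly when they satisfy the same $\FO_k$-sentences, $\Mod(\phi_{\As})$ is precisely the $\equiv^{\FO_k}$-class of $\As$. If $\D$ is saturated under $\equiv^{\FO_k}$ then $\D=\bigcup_{\As\in\D}\Mod(\phi_{\As})$, and as the $\phi_{\As}$ fall into only finitely many equivalence classes this union is the model class of a single finite disjunction $\phi\in\FO_k$, giving $\D=\Mod(\phi)$.

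The ``if'' direction of \ref{synt-free-EP} is the same argument with the one-sided description $\psi_{\As}\coloneqq\bigwedge_{\As\vDash\theta_j}\theta_j$ (we cannot negate, so this only carves out the up-set of $\As$, not its full class): it is a finite conjunction of $\EFO_k$-sentences, hence in $\EFO_k$, and one checks $\Mod(\psi_{\As})=\{\Bs\mid\As\Rrightarrow^{\EFO_k}\Bs\}$, so that upward closure of $\D$ gives $\D=\bigcup_{\As\in\D}\Mod(\psi_{\As})$, again the model class of a finite disjunction $\psi\in\EFO_k$. I do not expect a genuine obstacle: the mathematical content sits entirely in the cited finiteness fact (itself proved by a routine induction on $k$ using finiteness of $\sg$). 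The only points needing care are bookkeeping --- that every sentence built really stays in the intended fragment (finite $\vee,\wedge$, and for \ref{synt-free} also $\neg$, keep the quantifier rank $\le k$, and $\vee,\wedge$ preserve existential positivity), and that the disjunctions indexed by the possibly infinite class $\D$ collapse to finite ones, which is exactly where finiteness up to equivalence is used. The degenerate cases are absorbed by the convention above: $\D=\emptyset$ corresponds to $\phi=\psi=\bot$, and $\D=\CS$ to $\phi=\psi=\top$.
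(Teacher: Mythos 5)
Your proof is correct and follows essentially the same route the paper intends: the paper gives no argument beyond citing the finiteness of $\FO_k$ up to logical equivalence, and your Hintikka-style conjunctions plus a finite disjunction over the (finitely many) equivalence classes is precisely the standard argument behind that citation. Your explicit convention that the empty disjunction $\bot$ belongs to $\EFO_k$ (needed for $\D=\emptyset$ in part (b), since every genuinely existential positive sentence is satisfiable) is a detail the paper leaves implicit, and handling it is a point in your favour rather than a deviation.
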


\begin{remark}\label{rem:finite-fragments}
The previous lemma remains true if $\FO_k$ is replaced with any fragment of first-order logic that is closed under Boolean connectives and contains, up to logical equivalence, finitely many sentences.
\end{remark}

Now, fix a resource-indexed arboreal adjunction between $\E$ and $\C$, with adjunctions 
\[\begin{tikzcd}
\C_k \arrow[r, bend left=25, ""{name=U, below}, "L_k"{above}]
\arrow[r, leftarrow, bend right=25, ""{name=D}, "R_k"{below}]
& \E.
\arrow[phantom, "\textnormal{\footnotesize{$\bot$}}", from=U, to=D] 
\end{tikzcd}\]
Let us say that a full subcategory $\D$ of $\E$ is \emph{closed (in $\E$) under morphisms} if, whenever there is an arrow $a\to b$ in $\E$ with $a\in \D$, also $b\in \D$. 
Note that, when $\E=\CS$ and $\D=\Mod(\phi)$ is the model class of some sentence $\phi$, the category $\D$ is closed under morphisms precisely when $\phi$ is preserved under homomorphisms. 

Consider the following statement, where $\prCk$ and $\eqbCk$ are the relations on the objects of $\E$ induced by the resource-indexed arboreal adjunction as in Definition~\ref{def:resource-indexed-relations}:
\begin{enumerate}[label=\textnormal{(HP)}]
\item\label{HP-abstract} For any full subcategory $\D$ of $\E$ saturated under $\eqbCk$, $\D$ is closed under morphisms precisely when it is upwards closed with respect to $\prCk$.
\end{enumerate}

Replacing the relation $\eqbCk$ with $\eqcCk$, we obtain a strengthening of \ref{HP-abstract}, namely:
\begin{enumerate}[label=\textnormal{(HP${}^\#$)}]
\item\label{HPplus-abstract} For any full subcategory $\D$ of $\E$ saturated under $\eqcCk$, $\D$ is closed under morphisms precisely when it is upwards closed with respect to $\prCk$.
\end{enumerate}
Just recall that ${\eqcCk} \subseteq {\eqbCk}$, and so any full subcategory $\D$ saturated under $\eqbCk$ is also saturated under $\eqcCk$. Thus, \ref{HPplus-abstract} entails~\ref{HP-abstract}.
\begin{remark}\label{r:easy-dir-HPTs}
By Lemma~\ref{l:equiv-rel-properties}\ref{hom-k-hom}, any full subcategory of $\E$ that is upwards closed with respect to $\prCk$ is closed under morphisms. Hence, the right-to-left implications in~\ref{HP-abstract} and~\ref{HPplus-abstract} are always satisfied.
\end{remark}

In view of Example~\ref{logical-rel-EF} and Lemma~\ref{concrete-charact-log-eq}, for the Ehrenfeucht-\Fraisse~resource-indexed arboreal adjunction between $\CS$ and $\RT(\sg^I)$, property \ref{HP-abstract} coincides with Rossman's equirank homomorphism preservation theorem (Theorem~\ref{th:equirank-HPT}).

In Section~\ref{s:axiomatic} we will prove that \ref{HP-abstract} holds for any resource-indexed arboreal adjunction satisfying appropriate properties (see Corollary~\ref{cor:HPT-axiomatic}), which are satisfied in particular by the Ehrenfeucht-\Fraisse~resource-indexed arboreal adjunction.

\section{Exploring the landscape: tame and not-so-tame}
\label{s:exploring-the-landscape}
We shall identify, in Section~\ref{ss:tame}, a class of ``tame'' resource-indexed arboreal adjunctions, namely those satisfying the \emph{bisimilar companion property}, for which \textnormal{(HP)} always holds. 
As an application, we derive equi-resource homomorphism preservation theorems for graded modal logic and guarded first-order logics.

In the absence of the bisimilar companion property, one may try to ``force'' it; this leads us, in Section~\ref{ss:extendability}, to the notion of \emph{extendability}, inspired by the work of Rossman~\cite{Rossman2008}. 
Finally, in Section~\ref{s:relativisation}, we provide simple sufficient conditions under which properties \textnormal{(HP)} and \textnormal{(HP${}^\#$)} admit a relativisation to a full subcategory.

\subsection{Tame: bisimilar companion property and idempotency}\label{ss:tame}
For all $k>0$, write $G_k\coloneqq L_k R_k$ for the comonad on $\E$ induced by the adjunction ${L_k\dashv R_k \colon \E \to \C_k}$. 
\begin{definition}
A resource-indexed arboreal adjunction between $\E$ and $\C$, with induced comonads $G_k$, has the \emph{bisimilar companion property} if $a \eqbCk G_k a$ for all $a\,{\in}\, \E$ and $k > 0$.
\end{definition}

\begin{proposition}\label{p:HPT-tame}
\ref{HP-abstract} holds for any resource-indexed arboreal adjunction between $\E$ and $\C$ satisfying the bisimilar companion property.
\end{proposition}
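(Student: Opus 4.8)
The plan is to prove only the left-to-right implication of \ref{HP-abstract}, since its converse is already recorded in Remark~\ref{r:easy-dir-HPTs} and holds for every resource-indexed arboreal adjunction. So fix $k > 0$ and a full subcategory $\D$ of $\E$ that is saturated under $\eqbCk$ and closed under morphisms; the task is to show that $\D$ is upwards closed with respect to $\prCk$. Concretely, I would take objects $a \in \D$ and $b$ of $\E$ with $a \prCk b$, and argue that $b \in \D$.

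The key device is to route the argument through the companion $G_k b = L_k R_k b$, using the bisimilar companion property to transfer membership in $\D$ between an object and its companion. Unwinding the definition of $\prCk$ gives a morphism $R_k a \to R_k b$ in $\C_k$; applying the functor $L_k \colon \C_k \to \E$ produces a morphism $G_k a \to G_k b$ in $\E$. The bisimilar companion property yields $a \eqbCk G_k a$, so saturation of $\D$ under $\eqbCk$ together with $a \in \D$ gives $G_k a \in \D$; closure of $\D$ under morphisms and the arrow $G_k a \to G_k b$ then give $G_k b \in \D$. Finally the bisimilar companion property gives $b \eqbCk G_k b$, and since $\eqbCk$ is an equivalence relation, saturation applies in the reverse direction to yield $b \in \D$, as desired.

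There is no real obstacle here: the argument is a short chase combined with two invocations of the bisimilar companion property, one at each endpoint. The only points to check are the routine facts that $\eqbCk$ is symmetric (a span of open pathwise embeddings being reversible), so that saturation can be used in the step $G_k b \in \D \Rightarrow b \in \D$, and that $L_k$ is genuinely a functor $\C_k \to \E$, so that it carries the morphism $R_k a \to R_k b$ of $\C_k$ to a morphism of $\E$. The real work of the theory lies downstream of this proposition: verifying the bisimilar companion property for concrete covers, and dealing with adjunctions---such as the Ehrenfeucht-\Fraisse{} one---that fail it, for which the extendability machinery of the next sections is needed.
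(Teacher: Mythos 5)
Your proof is correct and is essentially the paper's argument: route membership in $\D$ through the companion $G_k a$ using the bisimilar companion property, saturation under $\eqbCk$, and closure under morphisms, with the converse implication delegated to Remark~\ref{r:easy-dir-HPTs}. The only (harmless) difference is that the paper transposes the morphism $R_k a \to R_k b$ across the adjunction $L_k \dashv R_k$ to obtain $G_k a \to b$ directly, so it invokes the bisimilar companion property only once at $a$, whereas you apply $L_k$ functorially to get $G_k a \to G_k b$ and invoke it at both endpoints, which additionally uses the (evident) symmetry of $\eqbCk$ that you correctly flag.
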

\begin{proof}
For the left-to-right implication in~\ref{HP-abstract}, let $\D$ be a full subcategory of $\E$ closed under morphisms and saturated under~$\eqbCk$. Suppose that $a\prCk b$ for objects $a,b$ of $\E$. By definition, this means that there is an arrow $R_k a \to R_k b$ and so, as $L_k$ is left adjoint to $R_k$, there is an arrow $G_k a \to b$. Using the bisimilar companion property, we get
\[
a \, \eqbCk \, G_k a\,  \to \, b.
\]
Therefore, if $a\in \D$ then also $b\in \D$. That is, $\D$ is upwards closed with respect to $\prCk$.

The converse direction follows from Remark~\ref{r:easy-dir-HPTs}.
\end{proof}

In order to establish a similar result for property \ref{HPplus-abstract}, recall that a comonad $G$ is \emph{idempotent} if its comultiplication $G \Rightarrow G G$ is a natural isomorphism.
\begin{definition}
A resource-indexed arboreal adjunction between $\E$ and $\C$ is \emph{idempotent} if so are the induced comonads $G_k$, for all $k>0$.
\end{definition}

\begin{proposition}\label{p:HPT-graded}
\ref{HPplus-abstract} holds for any idempotent resource-indexed arboreal adjunction between $\E$ and $\C$.
\end{proposition}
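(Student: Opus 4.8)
The plan is to run the argument of Proposition~\ref{p:HPT-tame} with the relation $\eqcCk$ in place of $\eqbCk$. By Remark~\ref{r:easy-dir-HPTs} the right-to-left implication in~\ref{HPplus-abstract} is automatic, so I would fix a full subcategory $\D$ of $\E$ that is closed under morphisms and saturated under $\eqcCk$, and show it is upwards closed with respect to $\prCk$. If $a\prCk b$ then, by definition, there is a morphism $R_k a\to R_k b$ in $\C_k$; transposing along the adjunction $L_k\dashv R_k$ yields a morphism $G_k a\to b$ in $\E$, exactly as in Proposition~\ref{p:HPT-tame}. Thus the whole statement reduces to the claim that $a\eqcCk G_k a$ for every object $a$ of $\E$ and every $k>0$: granting this, saturation of $\D$ gives $G_k a\in\D$ whenever $a\in\D$, and closure under morphisms then gives $b\in\D$.

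To prove the claim, \ie that $R_k a$ is isomorphic in $\C_k$ to $R_k G_k a = R_k L_k R_k a$, I would use idempotency of the comonad $G_k = L_k R_k$. Write $\eta,\epsilon$ for the unit and counit of $L_k\dashv R_k$, so that the comultiplication is $\delta = L_k\eta R_k$; idempotency means $\delta$ is a natural isomorphism. The point is that this forces the unit component $\eta_{R_k a}\colon R_k a\to R_k L_k R_k a$ to be an isomorphism. Indeed, the two counit laws $G_k\epsilon\circ\delta=\id$ and $\epsilon G_k\circ\delta=\id$ together with invertibility of $\delta$ give $G_k\epsilon_a=\epsilon_{G_k a}$ (both are $\delta_a^{-1}$); applying $R_k$ and combining with naturality of $\eta$ at the morphism $R_k\epsilon_a$ and the triangle identity at the object $G_k a$ yields $\eta_{R_k a}\circ R_k\epsilon_a=\id$, while $R_k\epsilon_a\circ\eta_{R_k a}=\id$ is the triangle identity at $a$. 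Hence $\eta_{R_k a}$ is an isomorphism, so $R_k a\cong R_k G_k a$, which is exactly $a\eqcCk G_k a$. Alternatively, one can simply invoke the standard equivalent characterisations of an idempotent adjunction.

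The only non-formal step is this passage from ``$G_k$ is idempotent'' to ``$\eta_{R_k a}$ is an isomorphism'', and it deserves a little care: $\delta_a = L_k\eta_{R_k a}$ being invertible does not by itself make $\eta_{R_k a}$ invertible, so the argument really does go through naturality of $\eta$ and the interaction of the two counit laws. Everything else is a verbatim adaptation of the proof of Proposition~\ref{p:HPT-tame}; in particular, and in contrast with the bisimilarity-based arguments, no assumption on the existence of products in $\C_k$ is needed here, since $\eqcCk$ refers to genuine isomorphism rather than to bisimilarity.
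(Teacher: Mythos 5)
Your proposal is correct and follows essentially the same route as the paper: reduce \ref{HPplus-abstract} to the claim $a\eqcCk G_k a$ via the transposition $R_ka\to R_kb \rightsquigarrow G_ka\to b$, and obtain that claim from the standard fact that idempotency of $G_k=L_kR_k$ forces $\eta R_k$ to be a natural isomorphism (which the paper simply recalls, and you verify explicitly and correctly using the counit laws, naturality of $\eta$, and the triangle identities).
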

\begin{proof}
Recall that $G_k$ is idempotent if, and only if, $\eta R_k$ is a natural isomorphism, where $\eta$ is the unit of the adjunction $L_k \dashv R_k$. In particular, for any $a\in \E$, the component of $\eta R_k$ at $a$ yields an isomorphism $R_k a \cong R_k G_k a$ in $\C$. Hence, $a \eqcCk G_k a$ for all $a\in\E$.
Reasoning as in the proof of Proposition~\ref{p:HPT-tame}, it is easy to see that~\ref{HPplus-abstract} holds.
\end{proof}

\begin{remark}
Consider an idempotent resource-indexed arboreal adjunction between $\E$ and $\C$ with induced comonads $G_k$ on $\E$. The previous proof shows that, for all $a\in \E$ and $k>0$, we have $a \eqcCk G_k a$. A fortiori, $a \eqbCk G_k a$. Therefore, any idempotent resource-indexed arboreal adjunction satisfies the bisimilar companion property.
\end{remark}

Next, we show how Propositions~\ref{p:HPT-tame} and~\ref{p:HPT-graded} can be exploited to obtain equi-resource homomorphism preservation theorems for (graded) modal logic and guarded first-order logics. Relativisations of these results to subclasses of structures, \eg to the class of all finite structures, are discussed in Section~\ref{s:relativisation}.

\subsection*{Graded modal logic}
Let $\sigma$ be a finite modal vocabulary. As observed in~\cite[\S 9.3]{AS2021}, the modal comonads $\Mk$ on $\CSstar$ are idempotent, hence so is the modal resource-indexed arboreal cover of $\CSstar$ by $\RMk(\sg)$. This corresponds to the fact that a tree-ordered Kripke structure is isomorphic to its tree unravelling. 
Thus, Proposition~\ref{p:HPT-graded} entails the following \emph{equidepth homomorphism preservation theorem} for graded modal formulas (\iec modal formulas that possibly contain graded modalities):

\begin{theorem}\label{th:hpt-graded-modal-logic}
The following statements are equivalent for any graded modal formula~$\phi$ of modal depth at most $k$ in a modal vocabulary:
\begin{enumerate}
\item $\phi$ is preserved under homomorphisms between pointed Kripke structures.
\item $\phi$ is logically equivalent to an existential positive modal formula of modal depth at most $k$.
\end{enumerate}
\end{theorem}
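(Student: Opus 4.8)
The plan is to deduce Theorem~\ref{th:hpt-graded-modal-logic} from the abstract property \ref{HPplus-abstract} applied to the modal resource-indexed arboreal cover of $\CSstar$ by $\RMk(\sg)$, exactly in the way the ``meta-remark'' before the theorem suggests. So the first step is to invoke the fact (from~\cite[\S 9.3]{AS2021}) that each modal comonad $\Mk$ is idempotent; by Proposition~\ref{p:HPT-graded} this gives \ref{HPplus-abstract} for this adjunction, namely: for any full subcategory $\D$ of $\CSstar$ saturated under $\eqcCk$ (here ${\cong_k^M}$), $\D$ is closed under morphisms iff it is upwards closed with respect to $\rightarrow_k^M$.

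Next I would translate this into logical terms using the dictionary of Example~\ref{logical-rel-modal} together with the Boolean-fragment version of Lemma~\ref{concrete-charact-log-eq} (Remark~\ref{rem:finite-fragments}). Concretely: given $\phi\in \ML_k(\#)$, set $\D\coloneqq\Mod(\phi)$, the full subcategory of pointed Kripke structures satisfying $\phi$. The key point is that $\Mod(\phi)$ is saturated under $\cong_k^M$: this is precisely the implication $(\As,a)\cong_k^M(\Bs,b)\Rightarrow(\As,a)\equiv^{\ML_k(\#)}(\Bs,b)$ recalled in Example~\ref{logical-rel-modal} (citing~\cite[Proposition~15]{DBLP:conf/csl/AbramskyS18} and~\cite[Proposition~3.6]{deRijke2000}). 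With $\D$ so defined, ``$\phi$ preserved under homomorphisms'' is by definition ``$\D$ closed under morphisms'', and by \ref{HPplus-abstract} this is equivalent to ``$\D$ upwards closed with respect to $\rightarrow_k^M$'', which by Example~\ref{logical-rel-modal} means $\D$ is upwards closed with respect to $\Rrightarrow^{\exists^+\ML_k}$. Finally, since $\exists^+\ML_k$ is (up to logical equivalence) a finite set closed under $\vee,\wedge$ — note $\sg$ finite and modal depth bounded — the analogue of Lemma~\ref{concrete-charact-log-eq}\ref{synt-free-EP} via Remark~\ref{rem:finite-fragments} tells us that a full subcategory is upwards closed with respect to $\Rrightarrow^{\exists^+\ML_k}$ exactly when it is $\Mod(\psi)$ for some $\psi\in\exists^+\ML_k$. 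Chaining these equivalences gives: $\phi$ preserved under homomorphisms iff $\Mod(\phi)=\Mod(\psi)$ for some $\psi\in\exists^+\ML_k$, i.e.\ iff $\phi$ is logically equivalent to such a $\psi$. That is exactly the claimed equivalence (1)$\Leftrightarrow$(2).

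The mild subtlety worth spelling out is the direction (2)$\Rightarrow$(1): an existential positive modal formula is automatically preserved under homomorphisms between pointed Kripke structures, which is either immediate by induction on the formula or a consequence of Lemma~\ref{l:equiv-rel-properties}\ref{hom-k-hom} together with the characterisation of $\rightarrow_k^M$; in the abstract framing this is just Remark~\ref{r:easy-dir-HPTs}, so no real work is needed. A second small point is the reduction to finite vocabularies: a single formula $\phi$ mentions only finitely many relation symbols, so we may assume $\sg$ finite, which is what licenses the use of Example~\ref{logical-rel-modal} and the finiteness of $\exists^+\ML_k$ up to equivalence.

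I do not anticipate a genuine obstacle here: the theorem is essentially a packaging of Proposition~\ref{p:HPT-graded} plus the already-established logic/comonad correspondences. The one place that deserves care in the write-up is making sure the saturation hypothesis is verified with the \emph{right} equivalence — we need $\Mod(\phi)$ saturated under $\cong_k^M$ (the weaker requirement coming from \ref{HPplus-abstract}), which holds for all of $\ML_k(\#)$, whereas mere bisimilarity-saturation $\eqbCk$ would only cover plain $\ML_k$; this is exactly why idempotency of $\Mk$, and hence the stronger property \ref{HPplus-abstract}, is needed to reach the graded fragment. I would state this explicitly so the reader sees where graded modalities enter.
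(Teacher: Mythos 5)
Your proposal is correct and follows essentially the same route as the paper's proof: verify saturation of $\Mod(\phi)$ under $\cong_k^M$ via the inclusion ${\cong_k^M}\subseteq{\equiv^{\ML_k(\#)}}$, invoke idempotency of $\Mk$ and Proposition~\ref{p:HPT-graded}, and translate upward closure under $\rightarrow_k^M={\Rrightarrow^{\exists^+\ML_k}}$ into definability by a formula of $\exists^+\ML_k$ using finiteness of that fragment up to logical equivalence (after reducing to a finite vocabulary). Your closing remark about why $\cong_k^M$-saturation, rather than bisimilarity-saturation, is what admits the graded fragment matches the paper's intent exactly.
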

\begin{proof}
Fix a graded modal formula $\phi\in\ML_k(\#)$. Since a single modal formula contains only finitely many modalities and propositional variables, we can assume without loss of generality that $\phi$ is a formula in a finite modal vocabulary. By Example~\ref{logical-rel-modal} we have
\[
{\rightarrow_k^{M}} = {\Rrightarrow^{\exists^+\ML_k}} \ \text{ and } \ {\cong_k^M} \subseteq {\equiv^{\ML_k(\#)}}.
\]
In particular, the latter inclusion entails that the full subcategory $\Mod(\phi)$ of $\CSstar$ is saturated under $\cong_k^M$. 
As the modal resource-indexed arboreal cover is idempotent, Proposition~\ref{p:HPT-graded} implies that $\Mod(\phi)$ is closed under morphisms if, and only if, it is upwards closed with respect to $\to_k^M$. Note that $\Mod(\phi)$ is closed under morphisms precisely when $\phi$ is preserved under homomorphisms between pointed Kripke structures. On the other hand, the equality ${\rightarrow_k^{M}} = {\Rrightarrow^{\exists^+\ML_k}}$ implies that $\Mod(\phi)$ is upwards closed with respect to $\to_k^M$ if, and only if, $\Mod(\phi)=\Mod(\psi)$ for some $\psi\in \exists^+\ML_k$ (this is akin to Lemma~\ref{concrete-charact-log-eq}\ref{synt-free-EP} and hinges on the fact that $\exists^+\ML_k$ contains finitely many formulas up to logical equivalence). Thus the statement follows.
\end{proof}

\begin{remark}
Forgetting about both graded modalities and modal depth, Theorem~\ref{th:hpt-graded-modal-logic} implies that a modal formula is preserved under homomorphisms if, and only if, it is equivalent to an existential positive modal formula. This improves the well known result that a modal formula is preserved under simulations precisely when it is equivalent to an existential positive modal formula (see \eg \cite[Theorem~2.78]{blackburn2002modal}). 
\end{remark}

\subsection*{Guarded fragments of first-order logic} 
The study of guarded fragments of first-order logic was initiated by Andr\'eka, van Benthem and N\'emeti in~\cite{HNvB1998} to analyse, and extend to the first-order setting, the good algorithmic and model-theoretic properties of modal logic. \emph{Guarded formulas} (over a relational vocabulary $\sg$) are defined by structural induction, starting from atomic formulas and applying Boolean connectives and the following restricted forms of quantification: if $\phi(\o{x},\o{y})$ is a guarded formula, then so are
\[
\exists \o{x}. \, G(\o{x},\o{y}) \wedge \phi(\o{x},\o{y}) \ \text{ and } \ \forall \o{x}. \, G(\o{x},\o{y}) \to \phi(\o{x},\o{y})
\]
where $G$ is a so-called \emph{guard}. The (syntactic) conditions imposed on guards determine different guarded fragments of first-order logic. We shall consider the following two:
\begin{itemize}
\item \emph{Atom guarded:} $G(\o{x},\o{y})$ is an atomic formula in which all variables in $\o{x},\o{y}$ occur.
\item \emph{Loosely guarded:} $G(\o{x},\o{y})$ is a conjunction of atomic formulas such that each pair of variables, one in $\o{x}$ and the other in $\o{x},\o{y}$, occurs in one of the conjuncts.
\end{itemize} 

The atom guarded fragment of first-order logic was introduced in~\cite{HNvB1998} under the name of F2 (``Fragment~$2$''), whereas the loosely guarded fragment was defined by van Benthem in~\cite{vBpieces}. The atom guarded fragment can be regarded as an extension of modal logic, in the sense that the standard translation of the latter is contained in the former. In turn, the loosely guarded fragment extends the atomic one and can express \eg (the translation of) the \emph{Until} modality in temporal logic, \cf \cite[p.~9]{vBpieces}.

For each notion of guarding $\g$ (atomic or loose), denote by 
\[
\g\FO^n \ \text{ and } \ \exists^+\g\FO^n,
\] 
respectively, the $n$-variable $\g$-guarded fragment of first-order logic and its existential positive fragment. 
In~\cite{Guarded2021}, \emph{guarded comonads} $\mathbb{G}_n^{\g}$ on $\CS$ are defined for all $n>0$. The associated categories of Eilenberg-Moore coalgebras are arboreal and induce the \emph{$\g$-guarded resource-indexed arboreal cover} of $\CS$ with resource parameter $n$. For an explicit description of the resource-indexed arboreal category in question, \cf \cite[\S IV]{Guarded2021}.

Assume the vocabulary $\sg$ is finite and let $\rightarrow_n^{\g}$ and $\leftrightarrow_n^{\g}$ be the resource-indexed relations on $\CS$ induced by the $\g$-guarded resource-indexed arboreal cover. It follows from \cite[Theorems~III.4 and~V.2]{Guarded2021} that, for all $\sg$-structures $\As,\Bs$ and all $n>0$, 
\[
\As \rightarrow_n^{\g} \Bs \ \Longleftrightarrow \ \As \Rrightarrow^{\exists^+\g\FO^n} \Bs
\]
and 
\[
\As \leftrightarrow_n^{\g} \Bs \ \Longleftrightarrow \ \As \equiv^{\g\FO^n} \Bs.
\]
To obtain an analogue of Lemma~\ref{concrete-charact-log-eq}, we consider finite fragments of $\g\FO^n$ by stratifying in terms of \emph{guarded-quantifier rank} (\cf Remark~\ref{rem:finite-fragments}).
Note that, as guarded quantifiers bound \emph{tuples} of variables, rather than single variables, the guarded-quantifier rank of a guarded formula is typically lower than its ordinary quantifier rank. Nevertheless, for all $k\geq 0$, the fragment $\g\FO^n_k$ of $\g\FO^n$ consisting of those sentences with guarded-quantifier rank at most $k$ contains finitely many sentences up to logical equivalence.

This stratification can be modelled in terms of comonads $\mathbb{G}_{n,k}^{\g}$ on $\CS$, for all $n,k>0$, as explained in \cite[\S VII]{Guarded2021}. Fixing $n$ and letting $k$ vary, we obtain an \emph{$n$-variable $\g$-guarded resource-indexed arboreal cover} of $\CS$, with resource parameter $k$. The induced relations $\rightarrow_{n,k}^{\g}$ and $\leftrightarrow_{n,k}^{\g}$ on $\CS$ coincide, respectively, with preservation of $\exists^+\g\FO^n_k$ and equivalence in $\g\FO^n_k$. Thus, for any full subcategory $\D$ of $\CS$:
\begin{itemize}
\item $\D=\Mod(\phi)$ for some $\phi\in\g\FO^n_k$ if, and only if, $\D$ is saturated under $\leftrightarrow_{n,k}^{\g}$.
\item  $\D=\Mod(\psi)$ for some $\psi\in\exists^+\g\FO^n_k$ if, and only if, $\D$ is upwards closed with respect to $\rightarrow_{n,k}^{\g}$.
\end{itemize}

As observed in \cite[\S 6.1]{Hybrid2022}, the $\g$-guarded resource-indexed arboreal cover of $\CS$ satisfies the bisimilar companion property, and so does the $n$-variable $\g$-guarded resource-indexed arboreal cover for all $n>0$. Therefore, Proposition~\ref{p:HPT-tame} implies the following \emph{equirank-variable homomorphism preservation theorem} for guarded logics:
\begin{theorem}\label{th:hpt-guarded}
Let $\g$ be a notion of guarding (either atom or loose).
The following statements are equivalent for any $\g$-guarded sentence~$\phi$ in $n$ variables of guarded-quantifier rank at most $k$ in a relational vocabulary:
\begin{enumerate}
\item $\phi$ is preserved under homomorphisms.
\item $\phi$ is logically equivalent to an existential positive $\g$-guarded sentence in $n$ variables of guarded-quantifier rank at most $k$.
\end{enumerate}
\end{theorem}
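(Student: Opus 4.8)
The plan is to follow, almost verbatim, the template used to prove Theorem~\ref{th:hpt-graded-modal-logic}, now feeding in the $n$-variable $\g$-guarded resource-indexed arboreal cover of $\CS$ with resource parameter $k$ (induced by the comonads $\mathbb{G}_{n,k}^{\g}$) in place of the modal cover. First I would note that a single $\g$-guarded sentence mentions only finitely many relation symbols, so without loss of generality $\sg$ may be taken finite; this is precisely what makes the stratified fragments $\g\FO^n_k$ and $\exists^+\g\FO^n_k$ finite up to logical equivalence, hence what makes applicable the concrete characterisations of model classes recalled just before the theorem statement.

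Next I would fix $\phi\in\g\FO^n_k$ and consider its model class $\Mod(\phi)\subseteq\CS$. Since $\leftrightarrow_{n,k}^{\g}$ coincides with equivalence in $\g\FO^n_k$ and $\phi$ lies in that fragment, $\Mod(\phi)$ is saturated under $\leftrightarrow_{n,k}^{\g}$. Now I invoke the two facts the theorem rests on: (i) the $n$-variable $\g$-guarded resource-indexed arboreal cover satisfies the bisimilar companion property \cite{Hybrid2022}; (ii) therefore, by Proposition~\ref{p:HPT-tame}, property~\ref{HP-abstract} holds for it. Applying \ref{HP-abstract} to the full subcategory $\Mod(\phi)$ --- which is saturated under $\leftrightarrow_{n,k}^{\g}$ --- yields that $\Mod(\phi)$ is closed under morphisms if, and only if, it is upwards closed with respect to $\rightarrow_{n,k}^{\g}$.

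It then remains to translate both sides back into syntax. On the one hand, $\Mod(\phi)$ is closed under morphisms precisely when $\phi$ is preserved under homomorphisms. On the other hand, by the concrete characterisation recalled before the theorem (the guarded analogue of Lemma~\ref{concrete-charact-log-eq}\ref{synt-free-EP}, which hinges on $\exists^+\g\FO^n_k$ being finite up to logical equivalence), $\Mod(\phi)$ is upwards closed with respect to $\rightarrow_{n,k}^{\g}$ if, and only if, $\Mod(\phi)=\Mod(\psi)$ for some $\psi\in\exists^+\g\FO^n_k$, i.e.\ $\phi$ is logically equivalent to an existential positive $\g$-guarded sentence in $n$ variables of guarded-quantifier rank at most $k$. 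Chaining these equivalences gives the theorem, and the argument is uniform in the choice of $\g\in\{\text{atom},\text{loose}\}$.

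The main obstacle, such as it is, is not in this assembly step but is already discharged by the cited inputs: the substantive point is that the stratified cover still enjoys the bisimilar companion property (established in \cite{Hybrid2022} for the unstratified $\g$-guarded cover and noted there to persist for the $n$-variable stratification), since it is exactly this property that feeds Proposition~\ref{p:HPT-tame}. The only thing requiring a moment's care on our side is bookkeeping about the resource parameter: here it is $k$ that is varied while $n$ is held fixed, so \ref{HP-abstract} is to be applied to the family $\{\mathbb{G}_{n,k}^{\g}\}_{k>0}$ for our fixed $n$, not to the $n$-indexed family.
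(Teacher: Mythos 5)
Your proposal is correct and follows essentially the same route as the paper: the paper derives Theorem~\ref{th:hpt-guarded} exactly by noting that the $n$-variable $\g$-guarded resource-indexed arboreal cover (induced by the comonads $\mathbb{G}_{n,k}^{\g}$, with $n$ fixed and $k$ the resource parameter) satisfies the bisimilar companion property, applying Proposition~\ref{p:HPT-tame} to obtain \ref{HP-abstract}, and translating via the guarded analogues of Lemma~\ref{concrete-charact-log-eq} afforded by the finiteness of $\g\FO^n_k$ and $\exists^+\g\FO^n_k$ up to logical equivalence. Your bookkeeping remark about varying $k$ with $n$ held fixed matches the paper's setup precisely.
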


\subsection{Not-so-tame: extendability}\label{ss:extendability}
A resource-indexed arboreal adjunction may fail to satisfy the bisimilar companion property, in which case Proposition~\ref{p:HPT-tame} does not apply. This is the case \eg for the Ehrenfeucht-\Fraisse~resource-indexed arboreal adjunction:
\begin{example}\label{ex:EF-bcp-fails}
The Ehrenfeucht-\Fraisse~resource-indexed arboreal adjunction between $\CS$ and $\RT(\sg^I)$ does not have the bisimilar companion property. Suppose that $\sg=\{R\}$ consists of a single binary relation symbol and let $\As$ be the $\sg$-structure with underlying set $\{a,b\}$ satisfying $R^{\As}=\{(a,b),(b,a)\}$. In view of Examples~\ref{ex:res-ind-arb-adj} and~\ref{logical-rel-EF}, it suffices to find $k>0$ and a first-order sentence $\phi$ of quantifier rank $\leq k$ such that 
\[
\As \vDash \phi \ \text{ and } \ H\Ek J\As \not\vDash \phi.
\]
Let $\phi$ be the sentence
$
\forall x \forall y \ (x\neq y \Rightarrow xRy)
$
of quantifier rank $2$ stating that any two distinct elements are $R$-related. Then $\phi$ is satisfied by $\As$ but not by $H\Ek J\As$, because the sequences $[a]$ and $[b]$ are not $R$-related in $H\Ek J\As$. This shows that the bisimilar companion property fails for all $k\geq 2$.
\end{example}

When the bisimilar companion property fails, \ie $a \not\eqbCk G_k a$ for some $a\in \E$ and $k > 0$, we may attempt to force it by finding appropriate extensions $a^*$ and $(G_k a)^*$ of $a$ and $G_k a$, respectively, satisfying $a^* \eqbCk (G_k a)^*$. This motivates the notion of \emph{$k$-extendability} (see Definition~\ref{d:k-extendable} below), inspired by the work of Rossman~\cite{Rossman2008} and its categorical interpretation in~\cite{ABRAMSKY2020}.

To start with, we introduce the following notations. 
Given objects $a,b$ of a category~$\A$, we write $a\to b$ to denote the existence of an arrow from $a$ to $b$. Further, we write $a\rl b$ to indicate that $a\to b$ and $b\to a$, \ie $a$ and $b$ are \emph{homomorphically equivalent}. 
This applies in particular to coslice categories.
Recall that, for any $c\in\A$, the \emph{coslice category} $c/{\A}$ (also known as \emph{under category}) has as objects the arrows in $\A$ with domain~$c$. For any two objects $m\colon c\to a$ and $n\colon c\to b$ of $c/{\A}$, an arrow $f\colon m\to n$ in $c/{\A}$ is a morphism $f\colon a\to b$ in $\A$ such that $f\circ m = n$. Hence, $m\rl n$ in $c/{\A}$ precisely when there are arrows $f\colon a\to b$ and $g\colon b\to a$ in $\A$ satisfying $f\circ m = n$ and $g\circ n = m$. We shall represent this situation by means of the following diagram:
\[\begin{tikzcd}[column sep=2em]
{} & c \arrow{dl}[swap]{m} \arrow{dr}{n} & {} \\
a \arrow[yshift=3pt]{rr}{f} & & b \arrow[yshift=-3pt]{ll}{g}
\end{tikzcd}\]

\begin{remark}\label{rem:sections-coslice}
Note that $m\rl n$ in $c/{\A}$ whenever there is a section $f\colon a \to b$ in $\A$ satisfying $f\circ m = n$.
Just observe that the left inverse $f^{-1}$ of $f$ satisfies
\[
f^{-1} \circ n = f^{-1} \circ f\circ m = m
\]
and so $n\to m$. Further, $f\circ m = n$ entails $m\to n$. Hence, $m\rl n$.
\end{remark}

Now, let $a$ be an object of an arboreal category $\C$ and let $m\colon P\emb a$ be a path embedding. 
As $\Emb{a}$ is a complete lattice by Lemma~\ref{l:arboreal:properties}\ref{SX-complete-lattice}, the supremum $\bigvee{{\uparrow}m}$ in $\Emb{a}$ of all path embeddings above $m$ exists and we shall denote it by
\[
\inc{m}\colon \Sg{m}\emb a.
\]
Clearly, $m\leq \inc{m}$ in $\Emb{a}$, and so there is a path embedding
\[
\co{m}\colon P\emb \Sg{m}
\]
satisfying $\inc{m} \circ \co{m} = m$. (Note that $\inc{m}$ is well defined only up to isomorphism in the coslice category ${\C}/a$, but as usual we work with representatives for isomorphism classes.)
\begin{remark}
To provide an intuition for the previous definition, let us say for the sake of this remark that a path embedding $m\colon P\emb a$ is ``dense in $a$'' if all elements of $\Path{a}$ are comparable with $m$. Then Lemma~\ref{l:corestriction-properties}\ref{comparable-subtree} below implies that $\inc{m}\colon \Sg{m}\emb a$ is the largest $\M$-subobject of $a$ in which $m$ is dense.
\end{remark}
\begin{lemma}\label{l:corestriction-properties}
The following statements hold for all path embeddings ${m\colon P\emb a}$:
\begin{enumerate}[label=(\alph*)]
\item\label{comparable-subtree} $\Path{\Sg{m}}$ is isomorphic to the subtree of $\Path{a}$ consisting of the elements that are comparable with $m$.
\item\label{corestriction-arrows-finer} For all path embeddings $n\colon P\emb b$ and arrows $f\colon a\to b$ such that ${f\circ m = n}$, there is a unique $g\colon \Sg{m}\to \Sg{n}$ making the following diagram commute. 
\[\begin{tikzcd}
\Sg{m} \arrow[rightarrowtail]{d}[swap]{\inc{m}} \arrow[dashed]{r}{g} & \Sg{n} \arrow[rightarrowtail]{d}{\inc{n}} \\
a \arrow{r}{f}  & b
\end{tikzcd}\]
\item\label{corestriction-arrows} For all path embeddings $n\colon P\emb b$, if $m\to n$ then $\co{m}\to \co{n}$.
\end{enumerate}
\end{lemma}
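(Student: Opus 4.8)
The plan is to establish the three parts in the order stated, as \ref{corestriction-arrows-finer} builds on \ref{comparable-subtree} and \ref{corestriction-arrows} follows immediately from \ref{corestriction-arrows-finer}. For \ref{comparable-subtree}, I would start from the observation that, as $\inc{m}\colon\Sg{m}\emb a$ is an embedding, the order-embedding $\exists_{\inc{m}}\colon\Emb{\Sg{m}}\to\Emb{a}$ of Lemma~\ref{l:emb-quo-order-embeddings} restricts to the monotone map $\Path{\inc{m}}\colon\Path{\Sg{m}}\to\Path{a}$ of~\eqref{eq:Path-functor}, $q\mapsto\inc{m}\circ q$, which is again an order-embedding and whose image is exactly the set of path embeddings of $a$ lying below $\inc{m}$ in $\Emb{a}$. (For the non-obvious inclusion: if $p\colon Q\emb a$ is a path embedding with $p\leq\inc{m}$, then $p=\inc{m}\circ j$ for some $j$, which is an embedding with path domain by Lemma~\ref{l:factorisation-properties}\ref{cancellation-m}, hence $j\in\Path{\Sg{m}}$ and $\Path{\inc{m}}(j)=p$.) It then remains to identify the path embeddings below $\inc{m}=\bigvee{\up}m$ with those comparable with $m$: by Lemma~\ref{l:arboreal:properties}\ref{join-irred}, a path embedding $p$ lies below $\bigvee{\up}m$ precisely when $p\leq p'$ for some path embedding $p'\geq m$, \ie when $p$ and $m$ admit a common upper bound in $\Path{a}$, and since $\Path{a}$ is a tree this is equivalent to $p$ and $m$ being comparable. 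Finally, the set of elements of $\Path{a}$ comparable with $m$ is non-empty and down-closed, hence a subtree of $\Path{a}$.

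For \ref{corestriction-arrows-finer}, the key ingredient is that, $\C$ being arboreal, $\Sg{m}$ is path-generated, \ie it is the colimit of the diagram $D$ of its path embeddings $q\colon Q\emb\Sg{m}$ (indexed by the poset $\Path{\Sg{m}}$, with connecting maps the unique embeddings between the domains, using Lemma~\ref{l:arboreal:properties}\ref{at-most-one-emb}). Given $f\colon a\to b$ with $f\circ m=n$, I would, for each such $q$, set $\hat q\coloneqq\inc{m}\circ q\in\Path{a}$ and take the (quotient, embedding) factorisation $f\circ\hat q=\mu_q\circ e_q$, so $\mu_q=\exists_f\hat q\in\Path{b}$. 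By \ref{comparable-subtree}, $\hat q$ is comparable with $m$; since $\exists_f$ is monotone and $\exists_f m=f\circ m=n$, either $\mu_q\leq n\leq\inc{n}$ (when $\hat q\leq m$) or $n\leq\mu_q$, and in the latter case $\mu_q\leq\bigvee{\up}n=\inc{n}$ by Lemma~\ref{l:arboreal:properties}\ref{join-irred}. Either way $\mu_q\leq\inc{n}$, so $\mu_q=\inc{n}\circ\nu_q$ for a unique $\nu_q$, yielding a morphism $\nu_q\circ e_q\colon Q\to\Sg{n}$ with $\inc{n}\circ(\nu_q\circ e_q)=f\circ\inc{m}\circ q$. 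Using that $\inc{n}$ is monic, one checks that $\{\nu_q\circ e_q\}$ is a cocone over $D$, so it induces a unique $g\colon\Sg{m}\to\Sg{n}$ with $g\circ q=\nu_q\circ e_q$ for all $q$; then $\inc{n}\circ g$ and $f\circ\inc{m}$ have equal composites with every leg of the colimit cocone, hence $\inc{n}\circ g=f\circ\inc{m}$, and $g$ is the unique such morphism since $\inc{n}$ is monic.

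Part \ref{corestriction-arrows} is then immediate: given $n\colon P\emb b$ with $m\to n$, choose $f\colon a\to b$ with $f\circ m=n$ and let $g$ be the morphism furnished by \ref{corestriction-arrows-finer}. Then $\inc{n}\circ(g\circ\co{m})=f\circ\inc{m}\circ\co{m}=f\circ m=n=\inc{n}\circ\co{n}$, and since $\inc{n}$ is monic we obtain $g\circ\co{m}=\co{n}$, \ie $\co{m}\to\co{n}$ in the coslice category $P/\C$.

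I expect the main obstacle to be \ref{corestriction-arrows-finer}: the morphism $g$ has to be produced without assuming that $\exists_f$ preserves joins for an arbitrary $f$ (which is not available here, since pullbacks along general morphisms need not exist), so path-generatedness of $\Sg{m}$ is what lets us assemble $g$ from its restrictions to path embeddings; the verification that these restrictions form a coherent cocone is routine once one exploits that $\inc{n}$ is a monomorphism, and \ref{comparable-subtree} is precisely what guarantees each restriction lands below $\inc{n}$.
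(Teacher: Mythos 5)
Your proposal is correct and follows essentially the same route as the paper's proof: part \ref{comparable-subtree} via the order-embedding $\inc{m}\circ-$ together with Lemma~\ref{l:arboreal:properties}\ref{join-irred}, part \ref{corestriction-arrows-finer} by assembling $g$ from the colimit cocone of path embeddings into the path-generated object $\Sg{m}$ (using \ref{comparable-subtree} and monotonicity of $\exists_f$ to see that each leg factors through $\inc{n}$), and part \ref{corestriction-arrows} by cancelling the monomorphism $\inc{n}$. The only differences are cosmetic: you spell out the cocone compatibility check and the tree argument for ``common upper bound $\Leftrightarrow$ comparable'' which the paper leaves implicit.
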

\begin{proof}
\ref{comparable-subtree} The map $\inc{m}\circ - \colon \Emb{\Sg{m}}\to \Emb{a}$ is an order-embedding by Lemma~\ref{l:emb-quo-order-embeddings}, and so its restriction $\Path{\Sg{m}}\to \Path{a}$ is an injective forest morphism. Hence, $\Path{\Sg{m}}$ is isomorphic to the subtree of $\Path{a}$ consisting of those elements that factor through $\inc{m}$, \ie that are below $\bigvee{{\uparrow}m}$. By Lemma~\ref{l:arboreal:properties}\ref{join-irred}, an element of $\Path{a}$ is below $\bigvee{{\uparrow}m}$ precisely when it is below some element of ${\uparrow}m$. In turn, the latter is equivalent to being comparable with~$m$.

\ref{corestriction-arrows-finer} Since $\Sg{m}$ is path-generated, it is the colimit of the canonical cocone $C$ of path embeddings over the small diagram $\Path{\Sg{m}}$ which, by item~\ref{comparable-subtree}, can be identified with the subdiagram of $\Path{a}$ consisting of those elements comparable with $m$. As $\Path{(f\circ \inc{m})}$ is monotone, it sends path embeddings comparable with $m$ to path embeddings comparable with $n$, and so the cocone $\{f\circ \inc{m}\circ p\mid p\in C\}$ factors through $\inc{n}\colon \Sg{n}\emb b$. Hence, there is $g\colon \Sg{m}\to \Sg{n}$ such that $\inc{n} \circ g = f\circ \inc{m}$. Finally, note that if $g'\colon \Sg{m}\to \Sg{n}$ satisfies $\inc{n} \circ g' = f\circ \inc{m}$ then we have $\inc{n} \circ g = \inc{n} \circ g'$, and so $g=g'$ because $\inc{n}$ is monic.

\ref{corestriction-arrows} Suppose there exists $f\colon a\to b$ such that $f\circ m = n$. By item~\ref{corestriction-arrows-finer}, there is $g\colon \Sg{m}\to \Sg{n}$ such that $\inc{n} \circ g = f\circ \inc{m}$. Therefore,
\[
\inc{n}\circ g\circ \co{m} = f\circ \inc{m} \circ \co{m} = f\circ m = n = \inc{n} \circ \co{n}
\]
and so $g\circ \co{m} = \co{n}$ because $\inc{n}$ is a monomorphism.
\end{proof}

\begin{remark}\label{rem:idempotent}
Lemma~\ref{l:corestriction-properties}\ref{corestriction-arrows} entails that $\co{m}\to \co{n}$ whenever $\co{m}\to n$. Just observe that $\co{(\co{m})}$ can be identified with $\co{m}$. 
\end{remark}

\begin{definition}\label{d:k-extendable}
Consider a resource-indexed arboreal adjunction between $\E$ and $\C$, with adjunctions $L_k\dashv R_k$.
 An object $a$ of $\E$ is \emph{$k$-extendable} if it satisfies the following property for all $e\in\E$: For all path embeddings $m\colon P\emb R_k a$ and $n \colon P\emb R_k e$ such that $\co{m} \rl \co{n}$ in the coslice category $P/{\C_k}$ (see the leftmost diagram below),
 \begin{center}
\begin{tikzcd}[column sep=2em]
{} & P \arrow[rightarrowtail]{dl}[swap]{\co{m}} \arrow[rightarrowtail]{dr}{\co{n}} & {} \\
\Sg{m} \arrow[yshift=3pt]{rr} & & \Sg{n} \arrow[yshift=-3pt]{ll}
\end{tikzcd}
\ \ \ \ \ \ \ 
\begin{tikzcd}[column sep=2em]
{} & Q \arrow[rightarrowtail, dashed]{dl}[swap]{\co{m'}} \arrow[rightarrowtail]{dr}{\co{n'}} & {} \\
\Sg{m'} \arrow[yshift=3pt, dashed]{rr} & & \Sg{n'} \arrow[yshift=-3pt, dashed]{ll}
\end{tikzcd}
\end{center}
if $n'\colon Q\emb R_k e$ is a path embedding satisfying $n\leq n'$ in $\Path{(R_k e)}$, there is a path embedding $m'\colon Q\emb R_k a$ such that $m\leq m'$ in $\Path{(R_k a)}$ and $\co{m'}\rl \co{n'}$ in $Q/{\C_k}$ (as displayed in the rightmost diagram above).
\end{definition}

We shall see in Proposition~\ref{p:extendable} below that, under appropriate assumptions, the $k$-extendability property allows us to upgrade the relation $\eqaCk$ to the finer relation $\eqbCk$.
For the next lemma, recall that a category is \emph{locally finite} if there are finitely many arrows between any two of its objects.
\begin{lemma}\label{l:quotients-inverse}
Let $\C$ be an arboreal category whose full subcategory $\Cp$ consisting of the paths is locally finite. If $f\colon P\epi Q$ and $g\colon Q\epi P$ are quotients between paths, then $f$ and $g$ are inverse to each other.
\end{lemma}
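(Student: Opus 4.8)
The plan is to show that the two composites $g\circ f\colon P\to P$ and $f\circ g\colon Q\to Q$ are isomorphisms, and then to promote ``isomorphism'' to ``identity'' by invoking the fact that in an arboreal category there is at most one embedding between any two given paths (Lemma~\ref{l:arboreal:properties}\ref{at-most-one-emb}).

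Write $h\coloneqq g\circ f$. Since $\Q$ is closed under composition (Lemma~\ref{l:factorisation-properties}\ref{compositions}), $h$ is a quotient from $P$ to $P$, in particular an epimorphism. To see that $h$ is invertible I would use local finiteness of $\Cp$: there are only finitely many morphisms $P\to P$, and right-composition with the epimorphism $h$ is an injection of this finite set into itself, hence a bijection, so $h$ has a left inverse $u$. Then $(h\circ u)\circ h=h\circ(u\circ h)=h=\id_P\circ h$, and cancelling the epimorphism $h$ on the right gives $h\circ u=\id_P$; thus $u$ is a two-sided inverse and $h$ is an isomorphism. (Alternatively one can argue via idempotents: every element of a finite semigroup has an idempotent power, so $e\coloneqq h^{k}$ is idempotent for some $k\ge 1$; such an $e$ is again a composite of quotients, hence epic, and cancelling $e$ in $e\circ e=\id_P\circ e$ forces $e=\id_P$, whence $h^{k}=\id_P$.)

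Since an isomorphism lies in $\M$ (Lemma~\ref{l:factorisation-properties}\ref{isos}), $h$ is an embedding $P\emb P$; as $\id_P$ is another such embedding, Lemma~\ref{l:arboreal:properties}\ref{at-most-one-emb} forces $h=\id_P$, \ie $g\circ f=\id_P$. For the remaining identity there is no need to rerun the argument: $f\circ g$ is a quotient, hence an epimorphism, and $g\circ f=\id_P$ gives $(f\circ g)\circ(f\circ g)=f\circ(g\circ f)\circ g=f\circ g$, so cancelling the epimorphism $f\circ g$ yields $f\circ g=\id_Q$. Hence $f$ and $g$ are mutually inverse. The only step carrying genuine content is the invertibility of $h$; everything else is formal, and the one thing to be careful about there is that the left inverse produced by counting is not a priori a two-sided inverse --- it becomes one precisely because $h$, being a quotient, is epic.
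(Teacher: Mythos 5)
Your proof is correct and follows essentially the same route as the paper: finiteness of the endomorphisms of $P$ (from local finiteness of $\Cp$) together with the epicness of quotients makes $g\circ f$ invertible, and uniqueness of embeddings between paths then forces $g\circ f=\id_P$. The only cosmetic differences are that the paper packages the counting step as ``the finite right-cancellative monoid of quotients $P\epi P$ is a group'' and gets $f\circ g=\id_Q$ by symmetry, whereas you count endomorphisms directly and deduce the second identity formally from the first; both are fine.
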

\begin{proof}
The set $M$ of quotients $P\epi P$ is a finite monoid with respect to composition, and it satisfies the right-cancellation law because every quotient is an epimorphism. Hence $M$ is a group, and so $g\circ f\in M$ has an inverse. It follows that $g\circ f$ is an embedding. Because there is at most one embedding between any two paths by Lemma~\ref{l:arboreal:properties}\ref{at-most-one-emb}, $g\circ f = \id_P$. By symmetry, also $f\circ g = \id_Q$.
\end{proof}

\begin{proposition}\label{p:extendable}
Consider a resource-indexed arboreal adjunction between $\E$ and $\C$ such that $\Cp^k$ is locally finite for all $k > 0$. For all $k$-extendable objects $a,b$ of $\E$ admitting a product, we have $a\eqaCk b$ if and only if $a\eqbCk b$.
\end{proposition}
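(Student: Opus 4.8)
The plan is to prove the substantive implication — that $a \eqaCk b$ together with the $k$-extendability of $a$ and $b$ forces $a \eqbCk b$ — the converse being the unconditional inclusion ${\eqbCk} \subseteq {\eqaCk}$ recorded in Section~\ref{s:prelim-resource-ind-arb-adj}. Write $X \coloneqq R_k a$ and $Y \coloneqq R_k b$, and fix morphisms $f \colon X \to Y$ and $g \colon Y \to X$ in $\C_k$ witnessing $a \eqaCk b$. Since $a, b \in \E$ admit a product and $R_k$, being a right adjoint, preserves it, the arboreal category $\C_k$ has the product $X \times Y$; hence by Theorem~\ref{th:bisimilar-iff-strong-back-forth} it suffices to exhibit a back-and-forth system between $X$ and $Y$ in $\C_k$. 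For this I would take $\B$ to be the set of all pairs $\br{m,n}$ with $m \in \Path{X}$, $n \in \Path{Y}$, $\dom(m) \cong \dom(n)$, and $\co{m} \rl \co{n}$ in the coslice category $\dom(m)/{\C_k}$, where one uses the canonical identification of $\dom(m)$ with $\dom(n)$ (paths have trivial automorphism group); $\B$ is a set by well-poweredness.

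For the root clause, path-generatedness of $X$ gives that $\inc{\bot_X}$ — the join in $\Emb{X}$ of all path embeddings above $\bot_X$, i.e.\ of all path embeddings into $X$ — is $\id_X$, so $\Sg{\bot_X} = X$ and $\co{\bot_X} = \bot_X$, and likewise for $Y$; thus one must check that $f$ and $g$ witness $\co{\bot_X} \rl \co{\bot_Y}$. The subtlety is that $\Path{f}$ and $\Path{g}$, being forest morphisms, carry roots to roots only up to a quotient of the root paths: $f \circ \bot_X$ factors through $\bot_Y$ via a quotient $e \colon \dom(\bot_X) \epi \dom(\bot_Y)$ of paths, and $g \circ \bot_Y$ through $\bot_X$ via a quotient $e'$ in the other direction. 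Here Lemma~\ref{l:quotients-inverse} — and hence the local finiteness of $\Cp^k$ — is used: $e$ and $e'$ are mutually inverse isomorphisms, so identifying the root domains along $e$ makes $f$ and $g$ into maps witnessing $\co{\bot_X} \rl \co{\bot_Y}$, whence $\br{\bot_X, \bot_Y} \in \B$. For the forth clause, given $\br{m,n} \in \B$ with common domain $P$ and $m' \colon Q \emb X$ with $m \cvr m'$ in $\Path{X}$, I would apply the $k$-extendability of $b$ (with auxiliary object $a$) to the homomorphic equivalence $\co{n} \rl \co{m}$ over $P$ and the inequality $m \leq m'$, obtaining a path embedding $n' \colon Q \emb Y$ with $n \leq n'$ in $\Path{Y}$ and $\co{m'} \rl \co{n'}$ over $Q$, so that $\br{m',n'} \in \B$. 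That $n \leq n'$ in fact gives $n \cvr n'$ follows because $m \cvr m'$ makes $\Emb{Q}$ exactly one step longer than $\Emb{P}$ — the down-set of a path embedding $p$ in $\Path{Z}$ being order-isomorphic to the finite chain $\Emb{\dom(p)}$ — and $\Path{Y}$ is a forest. The back clause is symmetric, using the $k$-extendability of $a$ with auxiliary object $b$. Theorem~\ref{th:bisimilar-iff-strong-back-forth} then yields that $X$ and $Y$ are bisimilar in $\C_k$, \iec $a \eqbCk b$.

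The step I expect to be the main obstacle is precisely the matching of path \emph{shapes}, and the root clause in particular: the whole construction would be routine if morphisms of $\C_k$ carried roots to roots and path embeddings to path embeddings on the nose, but they need not, and Lemma~\ref{l:quotients-inverse} — the one point where local finiteness of $\Cp^k$ is indispensable — is exactly what repairs this, by promoting the two-way quotients of paths one inevitably obtains to genuine isomorphisms. (Had one instead defined $\B$ directly in terms of the $\exists$-images of $f$ and $g$, rather than through the corestrictions $\co{(-)}$, the same lemma would be invoked in the inductive clauses as well, to see that a path mapped onto by a quotient and then back is carried by isomorphisms.)
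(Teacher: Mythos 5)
Your proposal is correct and follows essentially the same route as the paper: reduce to exhibiting a back-and-forth system via Theorem~\ref{th:bisimilar-iff-strong-back-forth} (using that $R_k$ preserves the product of $a$ and $b$), take $\B$ to consist of pairs whose corestrictions are homomorphically equivalent in the coslice, invoke $k$-extendability of $b$ and $a$ for the forth and back clauses, and upgrade $n\leq n'$ to $n\cvr n'$ by a chain-length/height count. The only (harmless) differences are bookkeeping: the paper phrases membership in $\B$ as $\Path{s}(\co{m})=\co{n}$ and $\Path{t}(\co{n})=\co{m}$ and therefore applies Lemma~\ref{l:quotients-inverse} inside the inductive clauses rather than at the root, and it argues the covering step via height preservation by the forest morphism $\Path{s'}$ instead of your identification of $\down p$ with $\Emb{\dom(p)}$.
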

\begin{proof}
Fix an arbitrary $k > 0$ and recall that $\C_k$ is an arboreal category. 
The ``if'' part of the statement follows from the inclusion ${\eqbCk} \subseteq {\eqaCk}$. 

For the ``only if'' part suppose that $a\eqaCk b$, \ie $R_k a$ and $R_k b$ are homomorphically equivalent in $\C_k$. To improve readability, let $X\coloneqq R_k a$ and $Y\coloneqq R_k b$. We must prove that $X$ and $Y$ are bisimilar. 
As $X$ and $Y$ admit a product in $\C_k$, namely the image under $R_k$ of the product of $a$ and $b$ in $\E$, by Theorem~\ref{th:bisimilar-iff-strong-back-forth} it suffices to show that $X$ and $Y$ are back-and-forth equivalent. 

Fix arbitrary morphisms $f\colon X\to Y$ and $g\colon Y\to X$, and let $m$ and $n$ denote generic elements of $\Path{X}$ and $\Path{Y}$, respectively. We claim that
\[
\B\coloneqq\{\br{m,n}\mid \exists \ s\colon \Sg{m}\to \Sg{n}, \, t\colon \Sg{n} \to \Sg{m} \ \text{s.t.} \ \Path{s}(\co{m})=\co{n} \ \text{and} \ \Path{t}(\co{n})=\co{m} \}
\]
is a back-and-forth system between $X$ and $Y$, \ie it satisfies items~\ref{initial}--\ref{back} in Definition~\ref{def:back-and-forth}.
For item~\ref{initial}, let $\bot_X,\bot_Y$ be the roots of $\Path{X}$ and $\Path{Y}$, respectively. Note that $\Sg{\bot_X}$ and $\Sg{\bot_Y}$ can be identified, respectively, with $X$ and $Y$. As $\Path{f}$ and $\Path{g}$ are forest morphisms, $\Path{f}(\bot_X)=\bot_Y$ and $\Path{g}(\bot_Y)=\bot_X$. So, $\br{\bot_X,\bot_Y}\in \B$.

For item~\ref{forth}, suppose $\br{m,n}\in\B$ and let $m'\in\Path{X}$ satisfy $m\cvr m'$. We seek $n'\in\Path{Y}$ such that $n\cvr n'$ and $\br{m',n'}\in\B$. By assumption, there are arrows $s\colon \Sg{m}\to \Sg{n}$ and $t\colon \Sg{n}\to \Sg{m}$ such that $\Path{s}(\co{m})=\co{n}$ and $\Path{t}(\co{n})=\co{m}$. Writing $P\coloneqq \dom(m)$ and $P'\coloneqq \dom(n)$, we have the following diagrams
\begin{center}
\begin{tikzcd}[row sep = 3em]
P \arrow[twoheadrightarrow]{r}{e} \arrow[rightarrowtail]{d}[swap]{\co{m}} & {\cdot} \arrow[rightarrowtail]{d}[description]{\Path{s}(\co{m})} \arrow{r}{\phi} & P' \arrow[rightarrowtail, bend left=30]{dl}{\co{n}} \\
\Sg{m} \arrow{r}{s}& \Sg{n} & 
\end{tikzcd}
\ \ \ \ \ \ \ 
\begin{tikzcd}[row sep = 3em]
P' \arrow[twoheadrightarrow]{r}{e'} \arrow[rightarrowtail]{d}[swap]{\co{n}} & {\cdot} \arrow[rightarrowtail]{d}[description]{\Path{t}(\co{n})} \arrow{r}{\psi} & P \arrow[rightarrowtail, bend left=30]{dl}{\co{m}} \\
\Sg{n} \arrow{r}{t}& \Sg{m} & 
\end{tikzcd}
\end{center}
where $\phi$ and $\psi$ are isomorphisms. By Lemma~\ref{l:quotients-inverse}, $\phi\circ e$ and $\psi\circ e'$ are inverse to each other, thus the left-hand diagram below commutes.
\begin{center}
\begin{tikzcd}[column sep=2em]
{} & P \arrow[rightarrowtail]{dl}[swap]{\co{m}} \arrow[rightarrowtail]{dr}{\co{n}\circ \phi\circ e} & {} \\
\Sg{m} \arrow[yshift=3pt]{rr}{s} & & \Sg{n} \arrow[yshift=-3pt]{ll}{t}
\end{tikzcd}
\ \ \ \ \ \ \ 
\begin{tikzcd}[column sep=2em]
{} & Q \arrow[rightarrowtail]{dl}[swap]{\co{m'}} \arrow[rightarrowtail, dashed]{dr}{\co{n'}} & {} \\
\Sg{m'} \arrow[yshift=3pt, dashed]{rr}{s'} & & \Sg{n'} \arrow[yshift=-3pt, dashed]{ll}{t'}
\end{tikzcd}
\end{center}
Let $Q\coloneqq \dom(m')$. Since $b$ is $k$-extendable, there exist a path embedding $n'\colon Q\emb Y$ such that $n\leq n'$ in $\Path{Y}$, and arrows $s'\colon \Sg{m'}\to \Sg{n'}$ and $t'\colon \Sg{n'}\to \Sg{m'}$ making the right-hand diagram above commute.
It follows that $\br{m',n'}\in\B$; just observe that $\Path{s'}(\co{m'})=\co{n'}$ because the composite $s'\circ \co{m'}$ is an embedding, and similarly $\Path{t'}(\co{n'})=\co{m'}$. 

It remains to show that $n\cvr n'$. 
For any element $x$ of a tree, denote by $\htf(x)$ its height. As $n\leq n'$ in $\Path{Y}$, it is enough to show that $\htf(n')=\htf(n)+1$. Using the fact that forest morphisms preserve the height of points and $\Path{s'}(\co{m'})=\co{n'}$, we get 
\[
\htf(n') = \htf(\co{n'}) =\htf(\co{m'})= \htf(m') = \htf(m)+1.
\]

Item~\ref{back} is proved in a similar way using the fact that $a$ is $k$-extendable.
\end{proof}

\begin{definition}
Consider a resource-indexed arboreal adjunction between $\E$ and $\C$, and an object $a\in\E$. For all $k > 0$, a \emph{$k$-extendable cover} of $a$ is a section $a \to a^*$ in $\E$ such that $a^*$ is $k$-extendable.
\end{definition}

\begin{proposition}\label{p:axiomatic-HP}
\ref{HP-abstract} holds for any resource-indexed arboreal adjunction between $\E$ and $\C$ satisfying the following properties for all $k >0$:
\begin{enumerate}[label=(\roman*)]
\item $\Cp^k$ is locally finite. 
\item $\E$ has binary products and each of its objects admits a $k$-extendable cover. 
\end{enumerate}
\end{proposition}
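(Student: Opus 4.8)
The plan is to prove the only substantive half of~\ref{HP-abstract}, namely that a full subcategory $\D$ of $\E$ which is saturated under $\eqbCk$ and closed under morphisms is automatically upwards closed with respect to $\prCk$; the other half is Remark~\ref{r:easy-dir-HPTs}. Fix $k>0$. The first observation is that $\prCk$ can be read off the adjunction $L_k\dashv R_k$: a morphism $R_k a\to R_k b$ in $\C_k$ is the same thing as a morphism $G_k a\to b$ in $\E$, with $G_k\coloneqq L_k R_k$. Consequently, if we manage to show that $a\in\D$ implies $G_k a\in\D$, then for any $b$ with $a\prCk b$ the morphism $G_k a\to b$ together with closure under morphisms gives $b\in\D$, which is exactly upward closure with respect to $\prCk$. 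So the whole proof reduces to the implication $a\in\D\Rightarrow G_k a\in\D$.

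To prove this I would insert $k$-extendable representatives on both sides and connect them through $\eqbCk$. Using hypothesis~(ii), pick $k$-extendable covers $a\to a^*$ and $G_k a\to (G_k a)^*$; being sections, both carry retractions, so there are morphisms $a\rl a^*$ and $G_k a\rl (G_k a)^*$. By Lemma~\ref{l:equiv-rel-properties}\ref{hom-k-hom} this yields $a\eqaCk a^*$ and $G_k a\eqaCk (G_k a)^*$, and Lemma~\ref{l:equiv-rel-properties}\ref{k-equiv} yields $a\eqaCk G_k a$; since $\eqaCk$ is the symmetrisation of the preorder $\prCk$ it is transitive, so $a^*\eqaCk (G_k a)^*$. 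Now $a^*$ and $(G_k a)^*$ are $k$-extendable and admit a product (as $\E$ has binary products), and $\Cp^k$ is locally finite by hypothesis~(i); hence Proposition~\ref{p:extendable} promotes $a^*\eqaCk (G_k a)^*$ to $a^*\eqbCk (G_k a)^*$.

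The conclusion is then a short diagram chase inside $\D$. From $a\in\D$ and the morphism $a\to a^*$ we get $a^*\in\D$ by closure under morphisms; from $a^*\in\D$ and $a^*\eqbCk (G_k a)^*$ we get $(G_k a)^*\in\D$ by saturation under $\eqbCk$; and from $(G_k a)^*\in\D$ and the retraction $(G_k a)^*\to G_k a$ we get $G_k a\in\D$, as wanted. The step I expect to be delicate, and the point that dictates the shape of the argument, is the appeal to Proposition~\ref{p:extendable}: $k$-extendability only converts the \emph{symmetric} relation $\eqaCk$ into $\eqbCk$, so one really does need a $k$-extendable object at each end of the chain $a^*\eqaCk a\eqaCk G_k a\eqaCk (G_k a)^*$ — this is why covers of both $a$ and $G_k a$ are taken, and why one cannot get away with comparing $a$ and $b$ directly (that would only produce the one-directional $a^*\prCk b^*$, not an $\eqaCk$-equivalence).
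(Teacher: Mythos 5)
Your proof is correct and follows essentially the same route as the paper: take $k$-extendable covers of both $a$ and $G_k a$, use Lemma~\ref{l:equiv-rel-properties} and transitivity of $\eqaCk$ to get $a^*\eqaCk (G_k a)^*$, upgrade to $\eqbCk$ via Proposition~\ref{p:extendable} (using local finiteness of $\Cp^k$ and binary products), and then chase through $\D$ using closure under morphisms, saturation under $\eqbCk$, and the retractions of the covers. Your closing remark about why covers are needed at both ends matches the structure of the paper's argument exactly.
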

\begin{proof}
Fix a full subcategory $\D$ of $\E$ saturated under $\eqbCk$. For the non-trivial implication in \ref{HP-abstract}, assume $\D$ is closed under morphisms and let $a,b\in\E$ satisfy $a\prCk b$ and $a\in \D$. 

If $G_k\coloneqq L_k R_k$, then $a\prCk b$ implies $G_k a\to b$. Let $s\colon a\to a^*$ and $t\colon G_k a\to (G_k a)^*$ be sections with $a^*$ and $(G_k a)^*$ $k$-extendable objects. It follows from Lemma~\ref{l:equiv-rel-properties}\ref{hom-k-hom} that $a\eqaCk a^*$ and $G_k a\eqaCk (G_k a)^*$. By Lemma~\ref{l:equiv-rel-properties}\ref{k-equiv} we have $a\eqaCk G_k a$ and so, by transitivity, $a^*\eqaCk (G_k a)^*$. An application of Proposition~\ref{p:extendable} yields $a^*\eqbCk (G_k a)^*$. We thus have the following diagram, where $t^{-1}$ denotes the left inverse of $t$.
\[\begin{tikzcd}[column sep=0.1em]
a^* & {\eqbCk} & (G_k a)^* &&&&& &&&&& &&&&&  \\
a \arrow{u}{s} & {\eqaCk} & G_k a \arrow[leftarrow]{u}[swap]{t^{-1}} \arrow{rrrrrrrrrrrrrrr}{} &&&&& &&&&& &&&&&  b
\end{tikzcd}\] 
Since $a\in\D$, and $\D$ is saturated under $\eqbCk$ and closed under morphisms, all the objects in the diagram above sit in $\D$. In particular, $b\in \D$ and thus \ref{HP-abstract} holds.
\end{proof}

\subsection{Relativising to full subcategories}\label{s:relativisation}
Let $\E'$ be a full subcategory of $\E$.
We say that \ref{HP-abstract} holds \emph{relative to $\E'$} if the following condition is satisfied: For any full subcategory $\D$ of $\E$ saturated under $\eqbCk$, $\D \cap \E'$ is closed under morphisms in $\E'$ precisely when it is upwards closed in $\E'$ with respect to $\prCk$. Likewise for \ref{HPplus-abstract}.

For the next proposition, observe that in order for a comonad $G$ on $\E$ to restrict to a full subcategory $\E'$ of $\E$ it is necessary and sufficient that $Ga\in \E'$ for all $a\in \E'$.

\begin{proposition}\label{p:relative-HPT-and-HPTplus}
Consider a resource-indexed arboreal adjunction between $\E$ and $\C$, and let $\E'$ be a full subcategory of $\E$ such that the induced comonads $G_k\coloneqq L_k R_k$ restrict to $\E'$. If the resource-indexed arboreal adjunction has the bisimilar companion property then \ref{HP-abstract} holds relative to $\E'$. If it is idempotent then \ref{HPplus-abstract} holds relative to $\E'$.
\end{proposition}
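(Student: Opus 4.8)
The plan is to replay the proofs of Propositions~\ref{p:HPT-tame} and~\ref{p:HPT-graded} relativised to $\E'$, the only new ingredient being the hypothesis that the induced comonads $G_k$ restrict to $\E'$. Fix $k>0$. For both statements the right-to-left implication is free: if $\D\cap\E'$ is upwards closed in $\E'$ with respect to $\prCk$, then since any arrow $a\to b$ in $\E'$ is in particular an arrow in $\E$ and hence yields $a\prCk b$ by Lemma~\ref{l:equiv-rel-properties}\ref{hom-k-hom}, the subcategory $\D\cap\E'$ is closed under morphisms in $\E'$---exactly as in Remark~\ref{r:easy-dir-HPTs}.

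For the left-to-right implication under the bisimilar companion property, I would fix a full subcategory $\D$ of $\E$ saturated under $\eqbCk$, assume $\D\cap\E'$ is closed under morphisms in $\E'$, and take $a,b\in\E'$ with $a\in\D$ and $a\prCk b$. Transposing the witnessing morphism $R_k a\to R_k b$ across the adjunction $L_k\dashv R_k$ gives a morphism $G_k a\to b$ in $\E$. The crucial observation is that this is in fact a morphism \emph{of} $\E'$: since $G_k$ restricts to $\E'$ and $a\in\E'$ we have $G_k a\in\E'$, and $\E'$ is full in $\E$. Now the bisimilar companion property gives $a\eqbCk G_k a$, so saturation of $\D$ yields $G_k a\in\D$, whence $G_k a\in\D\cap\E'$; closure of $\D\cap\E'$ under morphisms in $\E'$ together with the arrow $G_k a\to b$ then gives $b\in\D\cap\E'$. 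Thus $\D\cap\E'$ is upwards closed in $\E'$ with respect to $\prCk$, i.e.\ \ref{HP-abstract} holds relative to $\E'$.

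The idempotent case is handled in the same way with $\eqcCk$ in place of $\eqbCk$: as recalled in the proof of Proposition~\ref{p:HPT-graded}, idempotency of $G_k$ means $\eta R_k$ is a natural isomorphism, so $R_k a\cong R_k G_k a$ and therefore $a\eqcCk G_k a$ for all $a\in\E$. The transposition-plus-fullness step is unchanged, and one concludes that whenever $\D$ is saturated under $\eqcCk$, $\D\cap\E'$ is closed under morphisms in $\E'$ precisely when it is upwards closed in $\E'$ with respect to $\prCk$, i.e.\ \ref{HPplus-abstract} holds relative to $\E'$.

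No step here is genuinely hard; the single point requiring care---and the reason the hypothesis on $G_k$ is imposed---is ensuring that the transposed morphism $G_k a\to b$ actually lives in $\E'$, so that it can be fed into the relativised closure hypothesis on $\D\cap\E'$. Without $G_k$ restricting to $\E'$, the relation $a\prCk b$ with $a,b\in\E'$ would only produce a morphism in the ambient category $\E$, which the hypothesis on $\D\cap\E'$ does not see.
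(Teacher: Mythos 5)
Your proof is correct and follows exactly the route the paper intends: the paper's own proof is just ``the same, mutatis mutandis, as for Propositions~\ref{p:HPT-tame} and~\ref{p:HPT-graded}'', and your write-up supplies precisely the details implicit in that remark, including the key observation that $G_k a\in\E'$ together with fullness of $\E'$ makes the transposed arrow $G_k a\to b$ a morphism of $\E'$.
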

\begin{proof}
The same, mutatis mutandis, as for Propositions~\ref{p:HPT-tame} and~\ref{p:HPT-graded}, respectively.
\end{proof}

Since the modal comonads $\Mk$ restrict to finite pointed Kripke structures, the previous result yields a variant of Theorem~\ref{th:hpt-graded-modal-logic} for finite structures:
\begin{theorem}\label{th:hpt-graded-modal-logic-finite}
The following statements are equivalent for any graded modal formula~$\phi$ of modal depth at most $k$ in a modal vocabulary:
\begin{enumerate}
\item $\phi$ is preserved under homomorphisms between finite pointed Kripke structures.
\item $\phi$ is logically equivalent over finite pointed Kripke structures to an existential positive modal formula of modal depth at most $k$.
\end{enumerate}
\end{theorem}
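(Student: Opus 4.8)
The plan is to run the proof of Theorem~\ref{th:hpt-graded-modal-logic} again, but with the relativised statement \ref{HPplus-abstract} in place of \ref{HPplus-abstract} in its absolute form, i.e.\ appealing to Proposition~\ref{p:relative-HPT-and-HPTplus} instead of Proposition~\ref{p:HPT-graded}. Take $\E=\CSstar$, let $\C$ be the arboreal category underlying the modal resource-indexed arboreal cover of $\CSstar$, and let $\E'$ be the full subcategory of $\CSstar$ on the \emph{finite} pointed Kripke structures. As in the proof of Theorem~\ref{th:hpt-graded-modal-logic}, I would first reduce to a finite modal vocabulary $\sg$: a single formula $\phi\in\ML_k(\#)$ mentions only finitely many propositional variables and modalities, so we may assume $\sg$ finite and work inside $\CSstar$ for this $\sg$.

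Next I would check the hypotheses of Proposition~\ref{p:relative-HPT-and-HPTplus}. The modal resource-indexed arboreal cover is idempotent (the tree-ordered Kripke structure underlying $\Mk(\As,a)$ is its own tree unravelling), exactly as recalled in the proof of Theorem~\ref{th:hpt-graded-modal-logic}. And the induced comonads $\Mk$ restrict to $\E'$: if $(\As,a)$ is a finite pointed $\sg$-structure with $\sg$ finite, then there are only finitely many paths of length $\le k$ from $a$, so $\Mk(\As,a)$ is again finite. Hence Proposition~\ref{p:relative-HPT-and-HPTplus} gives that \ref{HPplus-abstract} holds relative to $\E'$. Now apply this with $\D=\Mod(\phi)$: by Example~\ref{logical-rel-modal} we have ${\cong_k^M}\subseteq{\equiv^{\ML_k(\#)}}$, so $\Mod(\phi)$ is saturated under $\cong_k^M$ on $\CSstar$, and \ref{HPplus-abstract} relative to $\E'$ yields that $\Mod(\phi)\cap\E'$ is closed under morphisms in $\E'$ if, and only if, it is upwards closed in $\E'$ with respect to $\rightarrow_k^M$. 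The first condition is precisely statement (1), preservation of $\phi$ under homomorphisms between finite pointed Kripke structures.

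It remains to translate the second condition into statement (2). By Example~\ref{logical-rel-modal}, ${\rightarrow_k^M}={\Rrightarrow^{\exists^+\ML_k}}$, so $\Mod(\phi)\cap\E'$ is upwards closed in $\E'$ with respect to $\rightarrow_k^M$ exactly when $\Mod(\phi)\cap\E'=\Mod(\psi)\cap\E'$ for some $\psi\in\exists^+\ML_k$; this is the relativisation to $\E'$ of Lemma~\ref{concrete-charact-log-eq}\ref{synt-free-EP}, which again hinges on the fact that $\exists^+\ML_k$ contains finitely many formulas up to logical equivalence (so an upwards-closed subclass of $\E'$ is a finite union of principal up-sets, each cut out within $\E'$ by the conjunction of the $\exists^+\ML_k$-type of a representative, and a finite disjunction of such formulas is again in $\exists^+\ML_k$). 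The converse implication in (2)$\Rightarrow$(1) is the easy direction, Remark~\ref{r:easy-dir-HPTs}, and uses that existential positive modal formulas are preserved under homomorphisms between \emph{all} pointed Kripke structures, hence in particular between finite ones. There is no genuinely hard step here: the content is entirely in Proposition~\ref{p:relative-HPT-and-HPTplus} and in the finiteness of $\exists^+\ML_k$ up to equivalence. The only points that need care are the verification that $\Mk$ really does restrict to finite structures (which is why the finite-vocabulary reduction must be done first) and the bookkeeping of ``logically equivalent \emph{over finite} pointed Kripke structures'' throughout the translation step.
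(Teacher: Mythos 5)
Your proposal is correct and follows essentially the same route as the paper, which derives Theorem~\ref{th:hpt-graded-modal-logic-finite} by applying Proposition~\ref{p:relative-HPT-and-HPTplus} to the idempotent modal cover with $\E'$ the finite pointed Kripke structures, exactly as you do. Your additional care about reducing to a finite vocabulary before checking that $\Mk$ restricts to finite structures, and about relativising Lemma~\ref{concrete-charact-log-eq}\ref{synt-free-EP}, just makes explicit details the paper leaves implicit.
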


Similarly, since the guarded comonads $\mathbb{G}_{n,k}^{\g}$ restrict to finite structures, we obtain the following variant of Theorem~\ref{th:hpt-guarded} for finite structures:
\begin{theorem}\label{th:hpt-guarded-logics-finite}
Let $\g$ be a notion of guarding (either atom or loose).
The following statements are equivalent for any $\g$-guarded sentence~$\phi$ in $n$ variables of guarded-quantifier rank at most $k$ in a relational vocabulary:
\begin{enumerate}
\item $\phi$ is preserved under homomorphisms between finite structures.
\item $\phi$ is logically equivalent over finite structures to an existential positive $\g$-guarded sentence in $n$ variables of guarded-quantifier rank at most $k$.
\end{enumerate}
\end{theorem}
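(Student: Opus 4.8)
The plan is to run the argument used for Theorem~\ref{th:hpt-guarded} inside the full subcategory $\E'\subseteq\CS$ of finite $\sg$-structures, using the relative version of \ref{HP-abstract} supplied by Proposition~\ref{p:relative-HPT-and-HPTplus} in place of the absolute version supplied by Proposition~\ref{p:HPT-tame}. First I would reduce to the case of a finite vocabulary: the sentence $\phi$ mentions only finitely many relation symbols, so we may assume $\sg$ is finite. Fix $n$ and $k$ as in the statement and consider the $n$-variable $\g$-guarded resource-indexed arboreal cover of $\CS$ with resource parameter $k$, induced by the comonads $\mathbb{G}_{n,k}^{\g}$; here the relation $\prCk$ is $\rightarrow_{n,k}^{\g}$ and the relation $\eqbCk$ is $\leftrightarrow_{n,k}^{\g}$. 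Two inputs are needed: (i) this resource-indexed arboreal cover satisfies the bisimilar companion property, as recorded (following \cite[\S 6.1]{Hybrid2022}) in the discussion preceding Theorem~\ref{th:hpt-guarded}; and (ii) each comonad $\mathbb{G}_{n,k}^{\g}$ restricts to $\E'$, i.e.~$\mathbb{G}_{n,k}^{\g}\As$ is finite whenever $\As$ is, which is exactly the observation made just before the theorem. Granting (i) and (ii), the first half of Proposition~\ref{p:relative-HPT-and-HPTplus} gives that \ref{HP-abstract} holds relative to $\E'$: for every full subcategory $\D$ of $\CS$ saturated under $\leftrightarrow_{n,k}^{\g}$, the category $\D\cap\E'$ is closed under morphisms in $\E'$ precisely when it is upwards closed in $\E'$ with respect to $\rightarrow_{n,k}^{\g}$.

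Next I would instantiate this with $\D=\Mod(\phi)$ and translate both sides into logical language, following the bookkeeping in the proof of Theorem~\ref{th:hpt-graded-modal-logic}. Since $\phi\in\g\FO^n_k$ and $\leftrightarrow_{n,k}^{\g}$ coincides with $\equiv^{\g\FO^n_k}$ on all $\sg$-structures (by \cite[Theorem~V.2]{Guarded2021} and the stratification of \cite[\S VII]{Guarded2021}), the class $\Mod(\phi)$ is saturated under $\leftrightarrow_{n,k}^{\g}$, so the relative statement applies. On the one hand, $\Mod(\phi)\cap\E'$ is closed under morphisms in $\E'$ if and only if $\phi$ is preserved under homomorphisms between finite structures. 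On the other hand, using that $\rightarrow_{n,k}^{\g}$ coincides with $\Rrightarrow^{\exists^+\g\FO^n_k}$ and that $\exists^+\g\FO^n_k$ contains, up to logical equivalence, only finitely many sentences, one checks exactly as in Lemma~\ref{concrete-charact-log-eq}\ref{synt-free-EP}---with ``model class'' replaced throughout by ``class of finite models''---that $\Mod(\phi)\cap\E'$ is upwards closed in $\E'$ with respect to $\rightarrow_{n,k}^{\g}$ if and only if there is $\psi\in\exists^+\g\FO^n_k$ with $\Mod(\phi)\cap\E'=\Mod(\psi)\cap\E'$, i.e.~$\phi$ is logically equivalent over finite structures to $\psi$. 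Combining these two equivalences with the relativised \ref{HP-abstract} yields the desired equivalence of (1) and (2).

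I do not expect a serious obstacle: all the substantive work sits in Proposition~\ref{p:relative-HPT-and-HPTplus} and in the comonad-theoretic and game-comonad facts imported from \cite{Guarded2021,Hybrid2022}. The only points requiring any care are bookkeeping ones: confirming that the logical characterisations of $\rightarrow_{n,k}^{\g}$ and $\leftrightarrow_{n,k}^{\g}$, which are stated for arbitrary $\sg$-structures, restrict correctly to finite ones---immediate, since both these relations and the corresponding logical relations are defined pointwise on structures---and re-running the finite-fragment argument of Lemma~\ref{concrete-charact-log-eq}\ref{synt-free-EP} in the finite-model setting, noting that finiteness of $\exists^+\g\FO^n_k$ up to equivalence over all structures a fortiori holds over finite structures, so that the relevant disjunction of sentences remains finite. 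Neither point is difficult, and the rest is \emph{mutatis mutandis} the proof of Theorem~\ref{th:hpt-guarded}.
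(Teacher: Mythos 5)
Your proposal is correct and follows essentially the same route as the paper, which obtains this theorem by noting that the guarded comonads $\mathbb{G}_{n,k}^{\g}$ restrict to finite structures and that the cover has the bisimilar companion property, then applying Proposition~\ref{p:relative-HPT-and-HPTplus} relative to the subcategory of finite structures. The logical bookkeeping you spell out (finite vocabulary reduction, saturation of $\Mod(\phi)$ under $\leftrightarrow_{n,k}^{\g}$, and the finite-model version of Lemma~\ref{concrete-charact-log-eq}\ref{synt-free-EP} using finiteness of $\exists^+\g\FO^n_k$ up to equivalence) is exactly what the paper leaves implicit by analogy with Theorems~\ref{th:hpt-graded-modal-logic} and~\ref{th:hpt-guarded}.
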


When the counits of the induced comonads $G_k\coloneqq L_k R_k$ are componentwise surjective, the next observation, combined with Proposition~\ref{p:relative-HPT-and-HPTplus}, gives a criterion to relativise equi-resource homomorphism preservation theorems to subclasses of structures. Recall that a \emph{negative formula} is one obtained from negated atomic formulas and $\vee, \wedge, \exists, \forall$.

\begin{lemma}\label{l:surj-counit-negpos-relat}
Let $G$ be a comonad on $\CS$ whose counit is componentwise surjective and let $T$ be a set of negative sentences. Then $G$ restricts to $\Mod(T)$.
\end{lemma}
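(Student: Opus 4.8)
The plan is to verify the criterion stated just before the lemma: $G$ restricts to $\Mod(T)$ if and only if $G\As \in \Mod(T)$ for every $\As \in \Mod(T)$. So fix a $\sg$-structure $\As$ with $\As \vDash T$, and show $G\As \vDash T$. The key structural input is that the counit $\epsilon_{\As}\colon G\As \to \As$ is a surjective homomorphism. The heart of the argument is therefore a lemma about negative sentences: if $h\colon \Bs \to \As$ is a \emph{surjective} homomorphism and $\As \vDash \chi$ for a negative sentence $\chi$, then $\Bs \vDash \chi$. Granting this, we immediately get $G\As \vDash \chi$ for each $\chi \in T$ by applying it to $h = \epsilon_{\As}$, hence $G\As \in \Mod(T)$, and the stated criterion finishes the proof.

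First I would isolate and prove the surjective-preimage lemma for negative sentences by induction on formula structure, working with formulas-with-parameters (i.e.\ over the expanded vocabulary with constants for a chosen tuple) so the induction goes through the quantifiers. Concretely, for a surjective homomorphism $h\colon \Bs \to \As$ and a negative formula $\chi(\o x)$, one shows: for every tuple $\o a$ from $\Bs$, if $\As \vDash \chi(h\o a)$ then $\Bs \vDash \chi(\o a)$. The base case is negated atoms $\neg R(\o x)$: if $R^{\Bs}(\o a)$ held then, since $h$ is a homomorphism, $R^{\As}(h\o a)$ would hold, contradicting $\As \vDash \neg R(h\o a)$; note this direction of atom-preservation needs no surjectivity. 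Conjunction and disjunction are immediate from the induction hypothesis. For the universal quantifier $\forall y\, \chi(\o x, y)$: assume $\As \vDash \forall y\, \chi(h\o a, y)$ and take any $b \in \Bs$; then $\As \vDash \chi(h\o a, h b)$, so by induction $\Bs \vDash \chi(\o a, b)$, and since $b$ was arbitrary $\Bs \vDash \forall y\, \chi(\o a, y)$ — surjectivity is not even needed here. For the existential quantifier $\exists y\, \chi(\o x, y)$: assume $\As \vDash \exists y\, \chi(h\o a, y)$, pick a witness $c \in \As$ with $\As \vDash \chi(h\o a, c)$; \emph{here} surjectivity is used, to choose $b \in \Bs$ with $h b = c$, and then the induction hypothesis gives $\Bs \vDash \chi(\o a, b)$, hence $\Bs \vDash \exists y\, \chi(\o a, y)$. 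Taking $\o x$ empty yields the sentence case.

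The main obstacle — really the only subtle point — is the existential step, and keeping clear why the lemma needs surjectivity of $h$ precisely there and nowhere else; this is also the asymmetry that forces $T$ to consist of \emph{negative} (rather than arbitrary) sentences, since a positive atom $R(\o x)$ is not reflected along an arbitrary homomorphism. One should also be mildly careful that ``negative formula'' as defined in the excerpt allows both $\exists$ and $\forall$ together with $\vee, \wedge$ and negated atoms, so the induction must cover all of these cases uniformly; no further cases arise because negation is pushed all the way down to the atoms. Once the lemma is in hand, the application is immediate: for each $\As \in \Mod(T)$ and each $\chi \in T$, apply it with $h = \epsilon_{\As}\colon G\As \to \As$ (surjective by hypothesis) to conclude $G\As \vDash \chi$; thus $G\As \in \Mod(T)$, and by the criterion recalled before the lemma, $G$ restricts to $\Mod(T)$.
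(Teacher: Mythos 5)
Your proposal is correct and takes essentially the same route as the paper: the paper simply notes that the negation of each negative sentence in $T$ is equivalent to a positive sentence and cites the standard fact that positive sentences are preserved under surjective homomorphisms, which is exactly the contrapositive of the reflection lemma you prove by induction. Applying this to the counit components $G\As \epi \As$ and invoking the restriction criterion is identical in both arguments.
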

\begin{proof}
If $\psi\in T$, its negation is logically equivalent to a positive sentence $\chi$. Positive sentences are preserved under surjective homomorphisms and so, for all $\As\in \CS$, considering the component of the counit $G\As\epi \As$ we obtain 
\[
G\As\models \chi \ \Longrightarrow \ \As \models \chi.
\]
\Iec $G$ restricts to $\Mod(\psi)$. As $\Mod(T)= \bigcap_{\psi\in T}{\Mod(\psi)}$, the statement follows.
\end{proof}

The counits of the guarded comonads $\mathbb{G}_{n,k}^{\g}$ are componentwise surjective, thus the equirank-variable homomorphism preservation theorem for guarded logics and its finite variant (Theorems~\ref{th:hpt-guarded} and~\ref{th:hpt-guarded-logics-finite}, respectively) admit a relativisation to any full subcategory of the form $\Mod(T)$ where $T$ is a set of negative $\g$-guarded sentences.

Relativisations to subclasses of structures can be obtained even in the absence of the bisimilar companion property; in that case, we need to ensure that $k$-extendable covers can be constructed within the subclass. We defer the statement of this result to Section~\ref{s:axioms-adj} (see Corollary~\ref{c:forcing-bisim-comp-prop-relative}). 

\section{An axiomatic approach}\label{s:axiomatic}

In this section, we consider resource-indexed arboreal adjunctions between $\E$ and~$\C$ that need not satisfy the bisimilar companion property. Using the notion of $k$-extendable cover introduced in Section~\ref{ss:extendability}, we identify sufficient conditions ensuring that property~\ref{HP-abstract} is satisfied (see Corollary~\ref{cor:HPT-axiomatic}). We introduce first conditions \ref{lim-colim}--\ref{fact-syst} on $\E$ in Section~\ref{s:axioms-extensional} and then, in Section~\ref{s:axioms-adj}, conditions \ref{paths-finite}--\ref{path-restriction-prop} on the adjunctions. In Section~\ref{s:equirank-proof}, we derive a slight generalisation of the equirank homomorphism preservation theorem by showing that these conditions are satisfied by the Ehrenfeucht-\Fraisse~resource-indexed arboreal adjunction. 

\begin{remark}\label{rem:does-not-apply-to}
Let us point out that we cannot derive from this axiomatic approach an \emph{equivariable} homomorphism preservation theorem, whereby the number of variables in a sentence is preserved, let alone an \emph{equirank-variable} one. In fact, the corresponding ($k$-round) $n$-pebble comonads do not satisfy property~\ref{path-emb} below. On the other hand, under the additional assumption that $k\leq n+2$, where $k$ is the quantifier rank and $n$ is the number of variables in a sentence, an equirank-variable homomorphism preservation theorem was proved by Paine in~\cite{Paine2020}. Also, our approach does not readily apply to \emph{hybrid logic} because the hybrid comonads $\mathbb{H}_k$ in~\cite{Hybrid2022} do not satisfy the path restriction property~\ref{path-restriction-prop} (\cf Definition~\ref{def:path-re}).
\end{remark}

\subsection{Axioms for the extensional category}\label{s:axioms-extensional}
We require that the category $\E$ have the following properties:
\begin{enumerate}[label=\textnormal{(E\arabic*)}]
\item\label{lim-colim} $\E$ has all finite limits and small colimits.
\item\label{fact-syst} $\E$ is equipped with a proper factorisation system such that:
\begin{itemize}[leftmargin=*]
\item Embeddings are stable under pushouts along embeddings.
\item Pushout squares of embeddings are also pullbacks.
\item Pushout squares of embeddings are stable under pullbacks along embeddings.
\end{itemize}
\end{enumerate}

\begin{remark}
Note that property~\ref{fact-syst} only involves one half of the factorisation system, namely the embeddings. In fact, it could be weakened to the requirement that $\E$ admit a class of monomorphisms $\mathscr{N}$ satisfying appropriate properties. When $\mathscr{N}$ is the class of all monomorphisms, these are akin to the conditions for an \emph{adhesive category}, \cf \cite{Adhesive2004}.
\end{remark}

\begin{example}\label{ex:structures-axioms}
If $\sg$ is a relational vocabulary, $\CS$ satisfies \ref{lim-colim}--\ref{fact-syst}. In fact, it is well known that $\CS$ is complete and cocomplete, hence it satisfies~\ref{lim-colim}. For~\ref{fact-syst}, consider the proper factorisation system given by surjective homomorphisms and embeddings. 
Up to isomorphism, embeddings can be identified with inclusions of induced substructures. The pushout of a span of embeddings in $\CS$ can be identified with a union of structures, and so embeddings are stable under pushouts along embeddings. The remaining two conditions in~\ref{fact-syst} hold because they are satisfied in $\Set$ by the class of monomorphisms, and the forgetful functor $\CS\to \Set$ preserves and reflects pullback and pushout diagrams consisting of embeddings.
\end{example}

We note in passing that properties \ref{lim-colim}--\ref{fact-syst} are stable under taking coslices:

\begin{lemma}\label{l:extensional-ax-coslices}
If a category $\E$ satisfies~\ref{lim-colim}--\ref{fact-syst}, then so does $e/{\E}$ for all $e\in \E$.
\end{lemma}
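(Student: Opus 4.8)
The plan is to transfer both axioms from $\E$ along the forgetful functor $U_e\colon e/{\E}\to\E$, using the standard fact that $U_e$ creates all limits and all \emph{connected} colimits. Indeed, for a connected diagram $D$ in $e/{\E}$, the legs $\lambda_i$ of a colimit cocone of $U_e D$ all agree after precomposition with the structure maps $e\to U_e D_i$ (this is exactly where connectedness is used), so $\colim U_e D$ acquires a canonical coslice structure making it the colimit of $D$; dually, a cone on $U_e D$ already contains the morphism out of $e$ as an apex leg, whence $U_e$ creates (and preserves) limits. In particular $U_e$ reflects isomorphisms. This settles~\ref{lim-colim}: the terminal object of $e/{\E}$ is the unique arrow $e\to 1$, and pullbacks, being connected limits, are created, so $e/{\E}$ has all finite limits; and $e/{\E}$ has the initial object $\id_e$ together with all small connected colimits, from which all small colimits follow (adjoin the initial object to any small diagram to make it connected without altering its colimit) — equivalently, coslices of cocomplete categories are cocomplete.

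For~\ref{fact-syst}, I would equip $e/{\E}$ with the factorisation system whose quotients (resp.\ embeddings) are exactly those morphisms whose underlying $\E$-morphism is a quotient (resp.\ embedding). Checking that this is a proper factorisation system is routine: a $(\Q,\M)$-factorisation $f=m\circ q$ in $\E$ of the underlying arrow of a morphism in $e/{\E}$ lifts by giving the intermediate object the structure map obtained by composing $q$ with the relevant structure map out of $e$; the orthogonality conditions transfer from $\E$ because a diagonal filler produced in $\E$ automatically commutes with the coslice structure maps (a short chase using that the two vertical maps of the square are coslice morphisms), and the usual retract argument then identifies ${}^{\pit}\M_{e/{\E}}$ with the coslice quotients and $\Q_{e/{\E}}^{\pit}$ with the coslice embeddings; finally, properness holds since monomorphisms and epimorphisms in $\E$ remain such in $e/{\E}$.

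It remains to verify the three bulleted closure properties. Since pushouts are connected colimits and pullbacks are finite limits, both are created by $U_e$; moreover a span (resp.\ cospan) in $e/{\E}$ has an embedding leg precisely when its $U_e$-image does, and $U_e$ reflects isomorphisms, so a commutative square in $e/{\E}$ is a pushout (resp.\ pullback) if and only if its $U_e$-image is one in $\E$. Hence each of the three conditions—embeddings stable under pushout along embeddings, pushout squares of embeddings being pullbacks, and pushout squares of embeddings stable under pullback along embeddings—transfers verbatim from the corresponding property of $\E$ guaranteed by~\ref{fact-syst}, together with Lemma~\ref{l:factorisation-properties}\ref{pullbacks} where needed.

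The only genuinely non-mechanical point is the verification that the candidate classes constitute a proper factorisation system on $e/{\E}$—concretely, that the diagonal filler produced in $\E$ is a morphism of $e/{\E}$, and that the retract argument goes through in the coslice—but this is a routine diagram chase with no surprises; everything else is a formal consequence of the creation and reflection properties of $U_e$.
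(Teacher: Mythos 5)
Your proof is correct and follows essentially the same route as the paper: inherit \ref{lim-colim} from the standard (co)limit facts for coslice categories, lift the factorisation system along the projection $e/{\E}\to\E$ (the paper cites this as folklore and gets properness from faithfulness, exactly your mono/epi observation), and transfer the three closure conditions in \ref{fact-syst} using that the projection preserves and reflects the relevant pullbacks and pushouts. The only difference is that you spell out the details the paper leaves as well known.
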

\begin{proof}
Fix an arbitrary object $e\in\E$. It is well known that limits and colimits in $e/{\E}$ are inherited from $\E$, so $e/{\E}$ satisfies~\ref{lim-colim} because $\E$ does.

By assumption, $\E$ admits a proper factorisation system satisfying~\ref{fact-syst}. Let $\Q$ and $\M$ be the classes of arrows in $e/{\E}$ whose underlying morphisms in $\E$ are quotients and embeddings, respectively. It is folklore that $(\Q,\M)$ is a weak factorisation system in $e/{\E}$. Moreover, this factorisation system is proper because the codomain functor $\cod\colon e/{\E} \to \E$ is faithful.
Recall that $\cod\colon e/{\E} \to \E$ preserves pushouts, so embeddings in $e/{\E}$ are stable under pushouts along embeddings because the corresponding property is satisfied in~$\E$. The remaining two properties in~\ref{fact-syst} follow by similar reasoning, using the fact that $\cod\colon e/{\E} \to \E$ preserves and reflects limits and pushouts.
\end{proof}

\begin{example}
It follows from Example~\ref{ex:structures-axioms} and Lemma~\ref{l:extensional-ax-coslices} that, for all relational vocabularies $\sg$, the category $\CSstar$ of pointed $\sg$-structures satisfies \ref{lim-colim}--\ref{fact-syst}.
\end{example}

\subsection{Axioms for the resource-indexed adjunctions}\label{s:axioms-adj}
We now assume that the extensional category $\E$ satisfies \ref{lim-colim}--\ref{fact-syst}, and proceed to introduce conditions on the resource-indexed arboreal adjunction between $\E$ and $\C$.
To start with, consider an arbitrary adjunction $L\dashv R \colon \E \to \C$. As with any adjunction, there are hom-set bijections
\begin{equation*}
\E(L c, e) \longrightarrow \C(c, R a), \enspace f\mapsto f^\flat
\end{equation*}
natural in $c\in\C$ and $e\in\E$. Explicitly, $f^\flat$ is defined as the composite 
\[\begin{tikzcd}
c \arrow{r}{\eta_c} & R L c \arrow{r}{R f} & R e
\end{tikzcd}\]
where $\eta$ is the unit of the adjunction $L\,{\dashv}\, R$. The inverse of the function $f\mapsto f^\flat$ sends $g\in \C(c, R e)$ to the morphism $g^\#$ given by the composition
\[\begin{tikzcd}
L c \arrow{r}{L g} & L R e \arrow{r}{\epsilon_{e}} & e,
\end{tikzcd}\]
where $\epsilon$ is the counit of the adjunction. 
Naturality of these bijections means that
\[
(f_1\circ f_2)^\flat = R f_1 \circ f_2^\flat
\]
for all morphisms $f_1\colon e\to e'$ and $f_2\colon L c \to e$ in $\E$, and
\[
(g_1\circ g_2)^\#=g_1^\# \circ L g_2
\]
for all morphisms $g_1\colon c\to R e$ and $g_2\colon c'\to c$ in $\C$.

Next, we introduce the path restriction property for resource-indexed arboreal adjunctions. In a nutshell, this states that whenever $a\in \E$ embeds into the image under $L_k$ of a path, $a$ itself can be equipped with a path structure. Furthermore, these path structures can be chosen in a coherent fashion. We start with an auxiliary definition:
\begin{definition}
Consider a resource-indexed arboreal adjunction between $\E$ and $\C$. A path $P\in\Cp^k$ is \emph{smooth} if there exist $e\in \E$ and an embedding $P\emb R_k e$.
\end{definition}

\begin{remark}
The motivation for considering smooth paths arises from the fact that, when considering a fresh binary relation symbol $I$ modelling equality in the logic (\cf Example~\ref{ex:res-ind-arb-adj}), the interpretation of $I$ in these paths is always an equivalence relation.
\end{remark}

\begin{definition}\label{def:path-re}
A resource-indexed arboreal adjunction between $\E$ and $\C$ has the \emph{path restriction property} if, for all smooth paths $Q\in\Cp^k$ and embeddings ${j\colon a\emb L_k Q}$, there is a path $Q_a\in\Cp^k$ such that $L_k Q_a \cong a$ and the following conditions are satisfied:
\begin{enumerate}[label=(\roman*)]
\item\label{path-re-1} For all path embeddings $!_{P,Q}\colon P\emb Q$ in $\C_k$ and all commutative diagrams
\[\begin{tikzcd}
L_k P \arrow[rr, relay arrow=2ex, "L_k(!_{P,Q})"] \arrow[rightarrowtail]{r}{f}  & a \arrow[rightarrowtail]{r}{j} & L_k Q
\end{tikzcd}\]
there is an arrow $\ell\colon P\to Q_{a}$ such that $L_k \ell = f$.
\item\label{path-re-2} For all path embeddings $!_{P,Q}\colon P\emb Q$ such that $L_k(!_{P,Q})$ is an embedding, the pullback of $L_k(!_{P,Q})$ along $j$ is of the form $L_k \ell$ for some $\ell\colon Q_b\to Q_a$. 
\[\begin{tikzcd}
b \arrow[dr, phantom, "\lrcorner", very near start] \arrow[rightarrowtail]{r} \arrow[rightarrowtail]{d}[swap]{L_k \ell} & L_k P \arrow[rightarrowtail]{d}{L_k (!_{P,Q})} \\
a \arrow[rightarrowtail]{r}{j} & L_k Q
\end{tikzcd}\]
\end{enumerate}
\end{definition}

Finally, recall that an object $a$ of a category $\A$ is \emph{finitely presentable} if the associated hom-functor $\A(a,-)\colon \A\to \Set$ preserves directed colimits.

With regards to the resource-indexed arboreal adjunction, we assume that the following properties are satisfied for all $k > 0$ and all paths $P\in\Cp^k$:
\begin{enumerate}[label=\textnormal{(A\arabic*)}]
\item\label{paths-finite} The category $\Cp^k$ is locally finite and has finitely many objects up to isomorphism.
\item\label{paths-Lk-fp} $L_k P$ is finitely presentable in $\E$.
\item\label{path-emb} For all arrows $m\colon P\to R_k a$ in $\C_k$, if $m$ is an embedding then so is $m^\# \colon L_k P\to a$. The converse holds whenever $P$ is smooth.
\item\label{path-restriction-prop} The path restriction property is satisfied.
\end{enumerate}

\begin{theorem}\label{t:model-construction}
Consider a resource-indexed arboreal adjunction between $\E$ and $\C$ satisfying \ref{lim-colim}--\ref{fact-syst} and \ref{paths-finite}--\ref{path-restriction-prop}. For all $a\in\E$ and all $k > 0$, there exists a $k$-extendable cover of $a$.
\end{theorem}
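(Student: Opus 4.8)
The plan is to construct $a^{*}$ as the colimit of an $\omega$-chain $a = a_{0}\xrightarrow{\iota_{0}} a_{1}\xrightarrow{\iota_{1}} a_{2}\to\cdots$ in $\E$, obtained by an iterated ``saturation'' step, while carrying along a compatible family of retractions $r_{j}\colon a_{j}\to a$ with $r_{0}=\id_{a}$ and $r_{j+1}\circ\iota_{j}=r_{j}$. Writing $\iota_{\infty}\colon a\to a^{*}\coloneqq\colim_{j}a_{j}$ for the leg of the colimit cocone at $0$, the universal property of the colimit produces a morphism $r\colon a^{*}\to a$ with $r\circ\iota_{\infty}=\id_{a}$, so that $\iota_{\infty}$ is a section; it will then remain only to check that $a^{*}$ is $k$-extendable. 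Throughout, $\C_{k}$ is arboreal with $(\C_{k})_{p}=\Cp^{k}$, so all of Section~\ref{s:axioms-adj} applies inside $\C_{k}$.

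\emph{The saturation step.} At stage $j$, a potential failure of $k$-extendability is a tuple $(m,e,n,n')$ with $m\colon P\emb R_{k}a_{j}$ a path embedding, $e\in\E$, and $n\colon P\emb R_{k}e$, $n'\colon Q\emb R_{k}e$ path embeddings satisfying $n\le n'$ and $\co{m}\rl\co{n}$ in $P/{\C_{k}}$. Since only the object $\co{n'}$ of $Q/{\C_{k}}$ and the span realising $\co{m}\rl\co{n}$ are actually used, the path restriction property~\ref{path-restriction-prop} lets one take $e$ of the form $L_{k}(-)$ applied to a path, so by~\ref{paths-finite} the relevant tuples form a set. For each of them I would form, in $\E$, the pushout of the embedding $m^{\#}\colon L_{k}P\emb a_{j}$ (an embedding by~\ref{path-emb}) along $L_{k}(!_{P,Q})\colon L_{k}P\to L_{k}Q$, and take $a_{j+1}$ to be the object obtained by forming all these pushouts simultaneously over $a_{j}$. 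Since $Q=\dom(n')$ is smooth, $\eta_{Q}\colon Q\to R_{k}L_{k}Q$ is an embedding and hence so is $L_{k}(!_{P,Q})=(\eta_{Q}\circ !_{P,Q})^{\#}$; by~\ref{fact-syst} both $\iota_{j}\colon a_{j}\to a_{j+1}$ and the coprojection $L_{k}Q\to a_{j+1}$ are then embeddings. Transposing the latter and applying the converse half of~\ref{path-emb} gives a path embedding $m'\colon Q\emb R_{k}a_{j+1}$, and the naturality of the adjunction bijections shows $m'$ extends $m$, whence $m\le m'$ in $\Path{(R_{k}a_{j+1})}$. Finally, $\co{m}\rl\co{n}$ together with $n\le n'$ yields a morphism $L_{k}Q\to a$ agreeing with $r_{j}\circ m^{\#}$ on $L_{k}P$ -- a short transposition computation resting on the identity $\inc{m}\circ\co{m}=m$ of Lemma~\ref{l:corestriction-properties} -- so the pushout property extends $r_{j}$ to $r_{j+1}\colon a_{j+1}\to a$ with $r_{j+1}\circ\iota_{j}=r_{j}$.

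\emph{The hard part.} One must check that in $a_{j+1}$ the corestriction $\co{m'}\colon Q\emb\Sg{m'}$ is homomorphically equivalent over $Q$ to $\co{n'}\colon Q\emb\Sg{n'}$, i.e.\ $\co{m'}\rl\co{n'}$ in $Q/{\C_{k}}$. By Lemma~\ref{l:corestriction-properties}\ref{comparable-subtree}, $\Path{\Sg{m'}}$ is the subtree of $\Path{(R_{k}a_{j+1})}$ consisting of the path embeddings comparable with $m'$; using that pushout squares of embeddings are pullbacks and are stable under pullback along embeddings (axiom~\ref{fact-syst}), I would show that every such path embedding factors through the freshly adjoined copy of $L_{k}Q$, so that $\Sg{m'}$ is, up to isomorphism, an $\M$-subobject of $L_{k}Q$. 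Clause~\ref{path-re-2} of the path restriction property then exhibits $\Sg{m'}$ as $L_{k}$ of a path, and matching it against $\Sg{n'}$ through the arrows witnessing $\co{m}\rl\co{n}$ (transported along $!_{P,Q}$) supplies the required morphisms between $\Sg{m'}$ and $\Sg{n'}$ over $Q$. I expect this identification of $\Sg{m'}$ -- pinning down precisely which path embeddings into $R_{k}a_{j+1}$ are comparable with $m'$, so as to rule out unwanted interaction between the new copy of $L_{k}Q$ and the old part $a_{j}$ -- to be the main obstacle, requiring delicate bookkeeping with the pushout/pullback calculus of~\ref{fact-syst} together with both clauses of the path restriction property.

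\emph{Passing to the colimit.} Finally, given a potential failure $(m,e,n,n')$ of $k$-extendability at $a^{*}$, the transpose $m^{\#}\colon L_{k}P\to a^{*}$ factors through some $a_{j}$ because $L_{k}P$ is finitely presentable by~\ref{paths-Lk-fp}; by cancellation of embeddings (Lemma~\ref{l:factorisation-properties}\ref{cancellation-m}), smoothness of $P$, and the converse half of~\ref{path-emb}, the factor is again a path embedding, and -- using that the corestriction functor and the relation $\rl$ are preserved under transport along the colimit cocone, whose legs are embeddings by~\ref{fact-syst} -- one reduces the datum to a tuple of the form handled at stage $j$. The witness $m'\colon Q\emb R_{k}a_{j+1}$ produced there, transported to $a^{*}$, is then a path embedding extending $m$ with $\co{m'}\rl\co{n'}$, so $a^{*}$ satisfies Definition~\ref{d:k-extendable}. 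Combined with the retraction $r\colon a^{*}\to a$ from the first paragraph, $\iota_{\infty}\colon a\to a^{*}$ is a $k$-extendable cover of $a$, as required.
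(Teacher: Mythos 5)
Your overall skeleton---a one-step saturation producing a section, iterated $\omega$ times, with finite presentability of $L_k P$ (axiom~\ref{paths-Lk-fp}) used to pull a path embedding into $R_k(a^*)$ back to a finite stage---matches the paper's strategy (Proposition~\ref{pr:relative-extension} followed by the colimit argument). The gap is in the saturation step itself: you glue in only $L_k Q$, by pushing out $m^\#$ along $L_k(!_{P,Q})$, whereas what is needed is to glue in $L_k\Sg{n}$ along $L_k\co{n}$, i.e.\ the image under $L_k$ of the whole subtree of path embeddings comparable with $n$. With your smaller pushout even the ``easy'' half $\co{n'}\to\co{m'}$ of $\co{m'}\rl\co{n'}$ fails in general: a morphism $\Sg{n'}\to\Sg{m'}$ in $Q/\C_k$ must send the path embeddings of $R_k e$ extending $n'$ somewhere compatible with the fresh copy of $Q$, but in $a_j+_{L_kP}L_kQ$ the elements related to the new part outside the image of $L_kP$ lie only inside that copy of $Q$, while the only available target for the extension data is the old part via the witness $g\colon\Sg{n}\to\Sg{m}$---and $g$ sends $\co{n'}$ into the old part, not to $\co{m'}$. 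Already in the Ehrenfeucht-\Fraisse{} instance this breaks whenever $n'$ has proper extensions in $R_k e$. The paper's pushout $a+_{L_kP}L_k\Sg{n}$ is designed exactly to repair this: since $n\leq n'$ gives $\Sg{n'}\leq\Sg{n}$, the restriction of $\lambda^\flat$ to $\Sg{n'}$ immediately witnesses $\co{n'}\to\co{\xi}$.

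The other direction, which you yourself flag as the main obstacle, is attacked via a claim that is false: path embeddings comparable with $m'$ need \emph{not} factor through the freshly adjoined copy of $L_kQ$ (nor, in the paper's version, through $L_k\Sg{n}$), because extensions of $m'$ can re-enter the old part $a_j$. Hence $\Sg{m'}$ is not an $\M$-subobject of $L_kQ$, and clause~\ref{path-re-2} of Definition~\ref{def:path-re} cannot be applied to it as you propose. This is precisely where the bulk of the paper's Section~\ref{s:proof-one-step-ext} goes: for each path embedding $q$ comparable with the new embedding one pulls $q^\#$ back along $\iota$ and $\lambda$ (the cube~\eqref{eq:cube-of-embeddings}, using~\ref{fact-syst}), uses clause~\ref{path-re-1} of the path restriction property to realise the old part $\o{a}$ as $L_k$ of a path $Q_{\o{a}}$ sitting inside $\Sg{m}$, transports it to $\Sg{n}$ and then to $e$ via the witness $f\colon\Sg{m}\to\Sg{n}$, and verifies that the resulting arrows $\zeta_q$ form a compatible cocone over $\Path{\Sg{\xi}}$ (clause~\ref{path-re-2} enters only in that compatibility check). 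Finally, your size-control step is also unjustified: $k$-extendability quantifies over the subobjects $\Sg{n},\Sg{n'}$ of $R_k e$, so one cannot simply ``take $e$ of the form $L_k$ applied to a path''---the path restriction property says nothing of the sort. The paper needs the separate Lemma~\ref{l:fg-equiv-finite}, which by a type-based construction replaces $e$ by $L_k$ of a \emph{finite colimit of paths}, homomorphically equivalently over $P$; only then does~\ref{paths-finite} bound the saturation diagram. As it stands, the one-step construction and its verification---the heart of the theorem---are not established by your argument.
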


The proof of the previous key fact is deferred to Section~\ref{s:proof-mc}. Let us point out the following immediate consequence:
\begin{corollary}\label{cor:HPT-axiomatic}
\ref{HP-abstract} holds for all resource-indexed arboreal adjunctions satisfying \ref{lim-colim}--\ref{fact-syst} and \ref{paths-finite}--\ref{path-restriction-prop}.
\end{corollary}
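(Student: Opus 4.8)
The plan is to obtain \ref{cor:HPT-axiomatic} as a formal consequence of Theorem~\ref{t:model-construction} and Proposition~\ref{p:axiomatic-HP}: there is essentially nothing to prove beyond matching hypotheses. First I would recall that Proposition~\ref{p:axiomatic-HP} establishes \ref{HP-abstract} for a resource-indexed arboreal adjunction between $\E$ and $\C$ as soon as the following hold for every $k>0$: (i) the path category $\Cp^k$ is locally finite, and (ii) $\E$ has binary products and every object of $\E$ admits a $k$-extendable cover. So it suffices to check that \ref{lim-colim}--\ref{fact-syst} together with \ref{paths-finite}--\ref{path-restriction-prop} entail (i) and (ii).

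For (i), the local finiteness of $\Cp^k$ is simply the first assertion of \ref{paths-finite}. For the first half of (ii), the existence of binary products in $\E$ is immediate from \ref{lim-colim}, which posits that $\E$ has all finite limits. For the second half of (ii), the existence of a $k$-extendable cover for an arbitrary $a\in\E$ and arbitrary $k>0$ is precisely the conclusion of Theorem~\ref{t:model-construction}, whose hypotheses are exactly \ref{lim-colim}--\ref{fact-syst} and \ref{paths-finite}--\ref{path-restriction-prop}---the very hypotheses assumed here. Thus (i) and (ii) hold, and Proposition~\ref{p:axiomatic-HP} delivers \ref{HP-abstract}, which is the statement of the corollary.

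The only substantive step is Theorem~\ref{t:model-construction}, the existence of $k$-extendable covers, and this is where the real difficulty lies: its proof---deferred to Section~\ref{s:proof-mc}---is where the adhesivity-flavoured conditions in \ref{fact-syst}, the finite presentability of the $L_k$-images of paths in \ref{paths-Lk-fp}, the embedding-reflection property \ref{path-emb}, and the path restriction property \ref{path-restriction-prop} are genuinely used, presumably by constructing the cover $a\to a^*$ as a suitable transfinite colimit of amalgamations of paths along embeddings. At the level of the corollary itself no obstacle arises; it is pure bookkeeping, reconciling the list of hypotheses \ref{lim-colim}--\ref{fact-syst}, \ref{paths-finite}--\ref{path-restriction-prop} with the conditions invoked in Proposition~\ref{p:axiomatic-HP}.
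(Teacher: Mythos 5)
Your proposal is correct and follows exactly the paper's own route: the corollary is deduced by combining Proposition~\ref{p:axiomatic-HP} with Theorem~\ref{t:model-construction}, checking that local finiteness of $\Cp^k$ comes from \ref{paths-finite}, binary products from \ref{lim-colim}, and $k$-extendable covers from the theorem. The paper's proof is the same one-line combination; your hypothesis-matching is just a more explicit write-up of it.
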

\begin{proof}
By Proposition~\ref{p:axiomatic-HP} and Theorem~\ref{t:model-construction}.
\end{proof}

We can also deduce the following relativisation result. 
Let us say that a full subcategory $\D$ of a category $\A$ is \emph{closed (in $\A$) under co-retracts} provided that, whenever $A\in \D$ and there is a section $A\to B$ in $\A$, also $B\in \D$.
\begin{corollary}\label{c:forcing-bisim-comp-prop-relative}
Let $\sg$ be a relational vocabulary and consider a resource-indexed arboreal adjunction between $\CS$ and $\C$ satisfying \ref{paths-finite}--\ref{path-restriction-prop}. The following hold:
\begin{enumerate}
\item\label{rel-co-retr} If $\D$ is a full subcategory of $\CS$ closed under co-retracts such that each induced comonad $G_k\coloneqq L_k R_k$ restricts to $\D$, then \ref{HP-abstract} holds relative to $\D$.
\item\label{rel-negated-pos} If the counits of the comonads $G_k$ are componentwise surjective and $T$ is a set of negative sentences in the vocabulary $\sg$, then \ref{HP-abstract} holds relative to $\Mod(T)$.
\end{enumerate}
\end{corollary}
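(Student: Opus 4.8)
The plan is to derive both statements from results already in place: part~\ref{rel-co-retr} by rerunning the argument of Proposition~\ref{p:axiomatic-HP} while keeping track of the fact that the objects produced along the way stay inside $\D$, and part~\ref{rel-negated-pos} as a special case of part~\ref{rel-co-retr}.

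For part~\ref{rel-co-retr}, the starting observation is that $\CS$ satisfies~\ref{lim-colim}--\ref{fact-syst} by Example~\ref{ex:structures-axioms}, so together with the standing hypotheses~\ref{paths-finite}--\ref{path-restriction-prop} the assumptions of Theorem~\ref{t:model-construction} hold and every object of $\CS$ admits a $k$-extendable cover. The crucial point is that these covers can be chosen within $\D$: a $k$-extendable cover $a\to a^*$ is a section, so $a\in\D$ forces $a^*\in\D$ because $\D$ is closed under co-retracts; and since $G_k\coloneqq L_kR_k$ restricts to $\D$, the object $G_k a$ lies in $\D$ as well and therefore also has a $k$-extendable cover $G_k a\to(G_k a)^*$ with $(G_k a)^*\in\D$. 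Now fix a full subcategory $\D'$ of $\CS$ saturated under $\eqbCk$ such that $\D'\cap\D$ is closed under morphisms in $\D$, and take $a,b\in\D$ with $a\in\D'$ and $a\prCk b$. Following the proof of Proposition~\ref{p:axiomatic-HP}: $a\prCk b$ yields a morphism $G_k a\to b$; Lemma~\ref{l:equiv-rel-properties}\ref{hom-k-hom} and~\ref{k-equiv} together with transitivity of $\eqaCk$ give $a^*\eqaCk(G_k a)^*$; and Proposition~\ref{p:extendable}, applicable since $\Cp^k$ is locally finite by~\ref{paths-finite} and $\CS$ has binary products, upgrades this to $a^*\eqbCk(G_k a)^*$. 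The resulting zig-zag $a\to a^*\eqbCk(G_k a)^*\gets G_k a\to b$ lies entirely in $\D$, and by fullness all the displayed maps are morphisms of $\D$; traversing it and using that $\D'$ is saturated under $\eqbCk$ and that $\D'\cap\D$ is closed under morphisms in $\D$ forces $b\in\D'$. Hence $\D'\cap\D$ is upwards closed in $\D$ with respect to $\prCk$. The reverse implication of~\ref{HP-abstract} relative to $\D$ is immediate from Lemma~\ref{l:equiv-rel-properties}\ref{hom-k-hom}.

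For part~\ref{rel-negated-pos}, I would apply part~\ref{rel-co-retr} with $\D\coloneqq\Mod(T)$. That each $G_k$ restricts to $\Mod(T)$ is exactly Lemma~\ref{l:surj-counit-negpos-relat}, using that the counits are componentwise surjective and $T$ consists of negative sentences. Closure of $\Mod(T)$ under co-retracts rests on the same preservation principle underlying that lemma: if $A\models T$ and $s\colon A\to B$ is a section with retraction $r\colon B\to A$, then $r$ is surjective on underlying sets, hence a surjective homomorphism, and since the negation of each $\psi\in T$ is logically equivalent to a positive sentence---which is preserved under surjective homomorphisms---$B\models\neg\psi$ would give $A\models\neg\psi$, a contradiction; thus $B\models T$.

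The only genuinely delicate step is guaranteeing that the model construction of Theorem~\ref{t:model-construction} yields covers that remain inside the relativising subcategory; this is precisely what ``closed under co-retracts'' is engineered to ensure, and in part~\ref{rel-negated-pos} it reduces to the elementary fact that a retraction in $\CS$ is a surjective homomorphism, which is what lets the argument behind Lemma~\ref{l:surj-counit-negpos-relat} go through unchanged.
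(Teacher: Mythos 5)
Your proposal is correct and takes essentially the paper's own route: item~\ref{rel-co-retr} is the argument of Proposition~\ref{p:axiomatic-HP} rerun with Theorem~\ref{t:model-construction}, using closure under co-retracts (and the fact that $G_k$ restricts to $\D$) to keep the $k$-extendable covers and the objects $G_k a$ inside $\D$, while item~\ref{rel-negated-pos} reduces to item~\ref{rel-co-retr} via Lemma~\ref{l:surj-counit-negpos-relat} together with the observation that $\Mod(T)$ is closed under co-retracts because retractions in $\CS$ are surjective and negations of negative sentences are positive. One cosmetic remark: when traversing your zig-zag, the passage from $(G_k a)^*$ back to $G_k a$ uses the left inverse of the cover section $G_k a \to (G_k a)^*$ (the arrow $t^{-1}$ in the proof of Proposition~\ref{p:axiomatic-HP} you are following), which is worth stating explicitly since closure under morphisms only propagates membership forward along arrows.
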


\begin{proof}
The proof of item~\ref{rel-co-retr} is the same, mutatis mutandis, as for Proposition~\ref{p:axiomatic-HP}, using Theorem~\ref{t:model-construction} and the fact that if $\D$ is closed under co-retracts then $k$-extendable covers can be constructed within $\D$. 

Item~\ref{rel-negated-pos} is an immediate consequence of item~\ref{rel-co-retr}. 
Just observe that the comonads $G_k$ restrict to $\Mod(T)$ by Lemma~\ref{l:surj-counit-negpos-relat}, and $\Mod(T)$ is closed under co-retracts (\cf the proof of the aforementioned lemma).
\end{proof}

\subsection{The equirank homomorphism preservation theorem}\label{s:equirank-proof}
As observed in Section~\ref{s:HP-HPplus}, Rossman's equirank homomorphism preservation theorem is equivalent to property~\ref{HP-abstract} for the Ehrenfeucht-\Fraisse~resource-indexed arboreal adjunction between $\CS$ and $\RT(\sg^I)$. In turn, by Corollary~\ref{cor:HPT-axiomatic}, to establish~\ref{HP-abstract} it suffices to show that the latter resource-indexed arboreal adjunction satisfies \ref{lim-colim}--\ref{fact-syst} and \ref{paths-finite}--\ref{path-restriction-prop}. By Example~\ref{ex:structures-axioms}, the category $\CS$ satisfies \ref{lim-colim}--\ref{fact-syst} when equipped with the (surjective homomorphisms, embeddings) factorisation system, so it remains to show that \ref{paths-finite}--\ref{path-restriction-prop} hold. Before doing so, note that Corollary~\ref{c:forcing-bisim-comp-prop-relative} yields the following slight generalisation of the equirank homomorphism preservation theorem (just observe that the counits of the induced comonads on $\CS$ are componentwise surjective).
\begin{theorem}\label{t:equirank-hpt-relative}
Let $\sg$ be a relational vocabulary and let $\D$ be a full subcategory of $\CS$ closed under co-retracts such that the comonads on $\CS$ induced by the Ehrenfeucht-\Fraisse~resource-indexed arboreal adjunction restrict to $\D$. Then the equirank homomorphism preservation theorem holds relative to $\D$.

In particular, the equirank homomorphism preservation theorem holds relative to $\Mod(T)$ whenever $T$ is a set of negative sentences in the vocabulary $\sg$.
\end{theorem}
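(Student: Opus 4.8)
The plan is to reduce Theorem~\ref{t:equirank-hpt-relative} to Corollary~\ref{c:forcing-bisim-comp-prop-relative} together with the translation, already established in the excerpt, between property~\ref{HP-abstract} and the equirank homomorphism preservation theorem. Concretely, by Example~\ref{logical-rel-EF} and Lemma~\ref{concrete-charact-log-eq}, property~\ref{HP-abstract} for the Ehrenfeucht-\Fraisse~resource-indexed arboreal adjunction between $\CS$ and $\RT(\sg^I)$ \emph{is} (relative to $\D$, the relativised form of) the equirank homomorphism preservation theorem: a full subcategory $\Mod(\phi)$ for $\phi\in\FO_k$ is saturated under $\eqbCk=\mathord{\equiv^{\FO_k}}$, it is closed under morphisms exactly when $\phi$ is preserved under homomorphisms, and it is upwards closed with respect to $\prCk=\mathord{\Rrightarrow^{\EFO_k}}$ exactly when $\phi$ is equivalent to some $\psi\in\EFO_k$ (using that $\EFO_k$ has, up to logical equivalence, finitely many sentences). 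The relativised versions are obtained by intersecting with $\D$. So it suffices to invoke Corollary~\ref{c:forcing-bisim-comp-prop-relative}\ref{rel-co-retr}, whose hypotheses are: the Ehrenfeucht-\Fraisse~resource-indexed arboreal adjunction satisfies \ref{paths-finite}--\ref{path-restriction-prop}; $\D$ is closed under co-retracts; and each induced comonad $G_k$ restricts to $\D$. The last two are exactly the hypotheses of the theorem, and the first is the content of Section~\ref{s:equirank-proof}, where \ref{paths-finite}--\ref{path-restriction-prop} are verified for this adjunction (with \ref{lim-colim}--\ref{fact-syst} already handled by Example~\ref{ex:structures-axioms}).

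The second assertion follows by specialising $\D=\Mod(T)$ for a set $T$ of negative sentences. Here I would appeal to Corollary~\ref{c:forcing-bisim-comp-prop-relative}\ref{rel-negated-pos}: the counits of the comonads on $\CS$ induced by the Ehrenfeucht-\Fraisse~resource-indexed arboreal adjunction are componentwise surjective (these are the composites $H\LE_k$, whose counit at $\As$ is the quotient map assembling $\Ek J\As$ down to $\As$ via $\epsilon$ followed by $H$ of a surjection, and all relevant maps are surjective), so by Lemma~\ref{l:surj-counit-negpos-relat} each $G_k$ restricts to $\Mod(T)$, and $\Mod(T)$ is closed under co-retracts since positive sentences—hence negations of negative ones—are preserved under surjective homomorphisms, and a co-retract $A\to B$ is in particular a surjection.

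The only genuine mathematical work is therefore the verification of \ref{paths-finite}--\ref{path-restriction-prop} for the Ehrenfeucht-\Fraisse~adjunction, which I expect to be the main obstacle and which I would carry out as the remaining content of Section~\ref{s:equirank-proof}, independently of the statement being proved here. In that verification: \ref{paths-finite} is easy, since a path in $\Cp^k$ is a forest-ordered $\sg^I$-structure on a linearly ordered set of size $\le k$ with $\ref{E}$ holding, and there are finitely many of these up to isomorphism (with finitely many morphisms between any two, as they are determined by the underlying map of a bounded finite set); \ref{paths-Lk-fp} holds because $L_k P = H\LE_k P$ is a finite $\sg$-structure and finite structures are finitely presentable in $\CS$; \ref{path-emb} requires unwinding the adjunction formulas $(-)^\#$ and checking that $L_k$ applied to a path embedding is again an embedding precisely under the smoothness hypothesis (this is where the role of the relation $I$ being an equivalence relation on smooth paths enters); and \ref{path-restriction-prop}, the path restriction property, is the most delicate: given an embedding $j\colon a\emb L_k Q$ with $Q$ smooth, one must equip $a$ with a chain-ordered $\sg^I$-structure $Q_a$ satisfying \ref{E}, show $L_k Q_a\cong a$, and verify the two coherence conditions—which amounts to transporting the linear order along $j$ and checking compatibility with restriction to sub-paths and with pullbacks of path embeddings, using that pullbacks of embeddings in $\CS$ are computed on underlying sets (Example~\ref{ex:structures-axioms}). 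Assuming \ref{paths-finite}--\ref{path-restriction-prop} as established there, the proof of Theorem~\ref{t:equirank-hpt-relative} itself is the two-line reduction above.
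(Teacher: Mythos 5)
Your proposal follows the paper's own route exactly: the theorem is deduced from Corollary~\ref{c:forcing-bisim-comp-prop-relative} (item~\ref{rel-co-retr} for a general $\D$, item~\ref{rel-negated-pos} for $\Mod(T)$, using that the counits of the induced comonads are componentwise surjective), with the identification of the relativised equirank homomorphism preservation theorem with \ref{HP-abstract} relative to $\D$ coming from Example~\ref{logical-rel-EF} and Lemma~\ref{concrete-charact-log-eq}, and with the substantive work being the verification of \ref{paths-finite}--\ref{path-restriction-prop} for the Ehrenfeucht-\Fraisse~adjunction, deferred to Section~\ref{s:equirank-proof} just as in the paper. The only blemish is the redundant aside claiming that ``a co-retract $A\to B$ is in particular a surjection'': the section $A\to B$ is a split monomorphism, and the surjective homomorphism along which positive sentences are transported is the retraction $B\to A$; since you invoke Corollary~\ref{c:forcing-bisim-comp-prop-relative}, item~\ref{rel-negated-pos}, anyway, this slip does not affect correctness.
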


\begin{remark}
A consequence of the first part of Theorem~\ref{t:equirank-hpt-relative} is that the equirank homomorphism preservation theorem admits a relativisation to any class of structures that is \emph{co-homomorphism closed}, \ie downwards closed with respect to the homomorphism preorder on $\CS$, a fact already pointed out by Rossman in~\cite[\S 7.1.2]{Rossman2008}.
\end{remark}

We proceed to verify conditions \ref{paths-finite}--\ref{path-restriction-prop} for the adjunctions $L_k\dashv R_k$, where 
\[
L_k \coloneqq H \LE_k \ \text{ and } \ R_k\coloneqq \RE_k J
\]
with the notation of Example~\ref{ex:res-ind-arb-adj}.
Note that, since a first-order sentence contains only finitely many relation symbols, in order to deduce the equirank homomorphism preservation theorem, as well as Theorem~\ref{t:equirank-hpt-relative} above, we can assume without loss of generality that the relational vocabulary $\sg$ is finite.

\ref{paths-finite} Recall from Example~\ref{ex:RE} that, for all $k>0$, the paths in $\RTk(\sg^I)$ are those forest-ordered $\sg^I$-structures $(\As,\leq)$ such that the order is a chain of cardinality at most~$k$. Thus, $\As$ has cardinality at most $k$. It follows at once that there are finitely many paths in $\RTk(\sg^I)$ up to isomorphism, and at most one arrow between any two paths.

\ref{paths-Lk-fp} For any path $P=(\As,\leq)$ in $\RTk(\sg^I)$, $L_k P$ is the quotient of the $\sg$-reduct of $\As$ with respect to the equivalence relation generated by $I^{\As}$. As $\As$ is finite, so is $L_k P$. The finitely presentable objects in $\CS$ are precisely the finite $\sg$-structures (see \eg \cite[\S 5.1]{AR1994}), hence $L_k P$ is finitely presentable.

\ref{path-emb} Consider an arrow $m\colon P\to R_k \Bs$ in $\RTk(\sg^I)$, with $P=(\As,\leq)$ a path and $\Bs$ a $\sg$-structure. 
Let $\Bs'\coloneqq J(\Bs)$ be the $\sg^I$-structure obtained from $\Bs$ by interpreting $I$ as the identity relation.
Then $R_k \Bs$ is obtained by equipping $\Ek(\Bs')$ with the prefix order. Consider the $\sg$-homomorphism
\[
L_k m = H m \colon H(\As)\to H\Ek(\Bs').
\]
For convenience of notation, given an element $x\in\As$ we write $[x]$ for the corresponding element of $H(\As)$, and likewise for elements of $\Ek(\Bs')$.
Then $m^\#\colon L_k P \to \Bs$ is the composite of $Hm$ with the homomorphism $H\Ek(\Bs') \to \Bs$ sending the equivalence class of an element of $\Ek(\Bs')$ to the last element of any of its representatives. This map is well defined because, for any pair of sequences in $I^{\Ek(\Bs')}$, their last elements coincide.

Suppose $m$ is an embedding. If $m^\#([x])=m^\#([y])$ then $(m(x),m(y))$ belongs to the equivalence relation generated by $I^{\Ek(\Bs')}$, and so $(x,y)$ belongs to the equivalence relation generated by $I^{\As}$. It follows that $[x]=[y]$ in $H(\As)$, and so $m^\#$ is injective. The same argument, mutatis mutandis, shows that $m^\#$ reflects the interpretation of the relation symbols, hence is an embedding. 

Conversely, suppose that $m^\#$ is an embedding and $P$ is smooth. Consider an embedding $n\colon P\emb R_k \Cs$ with $\Cs\in \CS$. Note that the restriction of $I^{\Ek(\Cs')}$ to the image of $n$ is an equivalence relation, hence $I^{\As}$ is an equivalence relation. As any forest morphism whose domain is linearly ordered is injective, $m$ is injective. So, it remains to show that $m$ reflects the interpretation of the relation symbols. For all $x,y\in \As$, if 
\[
(m(x),m(y))\in I^{\Ek(\Bs')}
\] then $m^\#([x])=m^\#([y])$ and so $[x]=[y]$ because $m^\#$ is injective. That is, $(x,y)\in I^{\As}$, showing that $m$ reflects the interpretation of the relation $I$. Suppose now that $S$ is a relation symbol different from~$I$. For convenience of notation we shall assume that $S$ has arity $2$; the general case is a straightforward adaptation. For all $x,y\in \As$, if 
\[
(m(x),m(y))\in S^{\Ek(\Bs')}
\] 
then $(m^\#([x]), m^\#([y]))\in S^{\Bs}$ and so $([x], [y])\in S^{H(\As)}$ because $m^\#$ is an embedding. That is, there are $x',y'\in \As$ such that $(x,x'),(y,y')\in I^{\As}$ and $(x',y')\in S^{\As}$. We claim that the following property holds, from which it follows that $m$ is an embedding:
\begin{equation}\label{eq:smooth-property}
(x,x'),(y,y')\in I^{\As} \ \text{ and } \ (x',y')\in S^{\As} \ \Longrightarrow \ (x,y)\in S^{\As}. \tag{$\ast$}
\end{equation}
In turn, this is a consequence of the fact that $n$ is an embedding and, in $\Ek(\Cs')$, 
\[
(n(x),n(x')),(n(y),n(y'))\in I^{\Ek(\Cs')} \ \text{ and } \ (n(x'),n(y'))\in S^{\Ek(\Cs')}
\]
imply $(n(x),n(y))\in S^{\Ek(\Cs')}$.

\ref{path-restriction-prop} Finally, we show that the path restriction property is satisfied. Let $Q=(\Bs,\leq)$ be a smooth path in $\RTk(\sg^I)$, and let $j\colon \As\emb H(\Bs)$ be an embedding in $\CS$. Without loss of generality, we can identify $\As$ with a substructure of $H(\Bs)$, and $j$ with the inclusion map. As observed above, since $Q$ is smooth, $I^{\Bs}$ is an equivalence relation and property~\eqref{eq:smooth-property} is satisfied (with $\Bs$ in place of $\As$). If $q_{\Bs}\colon \Bs\twoheadrightarrow H(\Bs)$ is the canonical quotient map, let $Q_{\As}$ denote the substructure of $\Bs$ whose underlying set is
\[
\{x\in \Bs \mid q_{\Bs}(x) \in \As\}.
\]
Then $Q_{\As}$ is a path in $\RTk(\sg^I)$ when equipped with the restriction of the order on~$\Bs$ and, using~\eqref{eq:smooth-property}, we get $H(Q_{\As})\cong\As$. 

It follows from the definition of $Q_{\As}$ that item~\ref{path-re-1} in Definition~\ref{def:path-re} is satisfied. Just observe that any substructure of $Q_{\As}$ that is downwards closed in $Q$ is also downwards closed in $Q_{\As}$. With regards to item~\ref{path-re-2}, consider a path embedding $!_{P,Q}\colon P\emb Q$ with $P=(\Cs,\leq)$ and form the following pullback square in $\CS$.
\[\begin{tikzcd}
\Ds \arrow[dr, phantom, "\lrcorner", very near start] \arrow[rightarrowtail]{r} \arrow[rightarrowtail]{d} & H(\Cs) \arrow[rightarrowtail]{d}{L_k (!_{P,Q})} \\
\As \arrow[rightarrowtail]{r}{j} & H(\Bs)
\end{tikzcd}\]
Identifying $H(\Cs)$ with a substructure of $H(\Bs)$, we can assume $\Ds$ is the intersection of $\As$ and $H(\Cs)$. 
Because $\Cs$ is a substructure of $\Bs$, it follows that $Q_{\Ds}$ is a substructure of~$Q_{\As}$. Moreover, because $\Cs$ is downwards closed in~$\Bs$, $Q_{\Ds}$ is downwards closed in~$Q_{\As}$. That is, there is an inclusion $Q_{\Ds}\emb Q_{\As}$ whose image under $L_k$ coincides with the pullback of $L_k (!_{P,Q})$ along $j$. Hence the path restriction property holds.

\section{Proof of Theorem~\ref{t:model-construction}}\label{s:proof-mc}
%
For the remainder of this section, we fix an arbitrary resource-indexed arboreal adjunction between $\E$ and~$\C$, with adjunctions $L_k\dashv R_k\colon \E\to \C_k$, satisfying \ref{lim-colim}--\ref{fact-syst} and \ref{paths-finite}--\ref{path-restriction-prop}.

\subsection{Relative extendability}\label{ss:relative-ext}

For all $k > 0$, we denote by $\o{\C}_k$ the full subcategory of $\C_k$ whose objects are colimits of finite diagrams of embeddings in $\Cp^k$. Further, we write $L_k[\o{\C}_k]$ for the full subcategory of $\E$ defined by the objects of the form $L_k c$ for $c\in \o{\C}_k$. 

\begin{remark}\label{rem:finitely-many-finite-colim}
It follows from~\ref{paths-finite} that $\o{\C}_k$ is equivalent to a finite category. Therefore $L_k[\o{\C}_k]$ contains, up to isomorphism, only finitely many objects.
\end{remark}

As we shall see in the following lemma, every path embedding $P\emb R_k a$ is homomorphically equivalent to one of the form $P\emb R_k \tilde{a}$ with $\tilde{a}\in L_k[\o{\C}_k]$. Consequently, in the definition of $k$-extendable object (see Definition~\ref{d:k-extendable}) we can assume without loss of generality that $e\in L_k[\o{\C}_k]$. This observation, combined with Remark~\ref{rem:finitely-many-finite-colim}, will allow us to control the size of the diagrams featuring in the proof of Theorem~\ref{t:model-construction}.
\begin{lemma}\label{l:fg-equiv-finite}
For all path embeddings $m\colon P\emb R_k a$, there are $\tilde{a}\in L_k[\o{\C}_k]$ and a path embedding $\tilde{m}\colon P\emb R_k \tilde{a}$ such that $m\rl \tilde{m}$ in $P/{\C_k}$.
\end{lemma}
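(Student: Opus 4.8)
The plan is to reduce, via the adjunction $L_k\dashv R_k$, to a statement living entirely in $\C_k$, and then to produce the required object inside $R_k a$ using that $R_k a$ is path-generated. \emph{Reduction.} First I would note that, since $m$ is an embedding, \ref{path-emb} makes $m^\#\colon L_k P\emb a$ an embedding, and that $P$ is smooth (witnessed by $m$). It then suffices to find an object $c\in\o{\C}_k$, a path embedding $\hat m\colon P\emb c$, and morphisms $\pi\colon c\to R_k a$ and $\pi'\colon R_k a\to c$ in $\C_k$ with $\pi\circ\hat m=m$ and $\pi'\circ m=\hat m$ (that is, $m\rl\hat m$ in $P/{\C_k}$). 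Given these, put $\tilde a:=L_k c\in L_k[\o{\C}_k]$ and $\tilde m:=\eta_c\circ\hat m\colon P\to R_k L_k c=R_k\tilde a$. Using $\eta_c^\#=\id_{L_k c}$ one gets $\tilde m^\#=L_k\hat m$, and $\pi^\#\circ L_k\hat m=(\pi\circ\hat m)^\#=m^\#\in\M$ forces $L_k\hat m=\tilde m^\#\in\M$ by Lemma~\ref{l:factorisation-properties}\ref{cancellation-m}; hence $\tilde m$ is a path embedding by the converse half of \ref{path-emb} (here smoothness of $P$ is essential). Finally $R_k(\pi^\#)\circ\tilde m=(\pi^\#)^\flat\circ\hat m=\pi\circ\hat m=m$ and $(\eta_c\circ\pi')\circ m=\eta_c\circ\hat m=\tilde m$, so $m\rl\tilde m$ in $P/{\C_k}$, as required.

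\emph{Constructing $c$.} Since $\C_k$ is arboreal, $R_k a$ is the colimit of its canonical cocone over the tree $\Path{R_k a}$, all of whose nodes carry objects of $\Cp^k$. By \ref{paths-finite} there are finitely many isomorphism types of paths, each with a finite chain of $\M$-subobjects, so $\Path{R_k a}$ has bounded height and, at each node, only boundedly many isomorphism types of children. I would therefore choose a finite downward-closed subtree $D\subseteq\Path{R_k a}$ with $m\in D$ that is \emph{type-saturated}: whenever $n\in D$ and some node of $\Path{R_k a}$ isomorphic to $n$ has a child of type $\tau$, then $n$ has a child in $D$ of type $\tau$. Finiteness of $D$ follows from the two bounds, and type-saturation supplies a forest retraction $\rho\colon\Path{R_k a}\to D$ which fixes $D$ and preserves the isomorphism type of the carried path. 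Now set $c:=\bigvee D$ in the complete lattice $\Emb{R_k a}$ (Lemma~\ref{l:arboreal:properties}\ref{SX-complete-lattice}); then $c\emb R_k a$ is an $\M$-subobject, $c$ is a colimit of a finite diagram of embeddings in $\Cp^k$ and so lies in $\o{\C}_k$, $\Path{c}\cong D$, and $m\in D$ yields $\hat m\colon P\emb c$ with $\kappa\circ\hat m=m$ for the inclusion $\kappa\colon c\emb R_k a$. Take $\pi:=\kappa$. For $\pi'$, the retraction $\rho$ induces a cocone on the canonical diagram of $R_k a$ by sending a node $n$ to the composite $\dom(n)\xrightarrow{\ \cong\ }\dom(\rho n)\emb c$, the isomorphism being the unique one between these isomorphic paths (Lemma~\ref{l:arboreal:properties}\ref{at-most-one-emb}); since $\rho$ preserves the covering relation and carried types, this cocone is compatible, hence factors through $R_k a$ as $\pi'\colon R_k a\to c$, and $\rho|_D=\id_D$ gives $\pi'\circ\kappa=\id_c$, whence $\pi'\circ m=\hat m$. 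This establishes $m\rl\hat m$ in $P/{\C_k}$, and the reduction concludes.

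\emph{Main obstacle.} The substantive step is Step~2: pinning down the right notion of type-saturated finite $D$ and verifying that the coherent forest retraction $\rho\colon\Path{R_k a}\to D$ exists and is defined on all of $\Path{R_k a}$, since this is exactly what makes the cocone defining $\pi'$ well-defined. Keeping $D$ finite while $\rho$ is total is precisely where \ref{paths-finite} (and, morally, Remark~\ref{rem:finitely-many-finite-colim}, which also secures $c\in\o{\C}_k$) does the work; everything else is adjunction calculus together with the cancellation and completeness properties already recorded for the factorisation system and for $\Emb{-}$.
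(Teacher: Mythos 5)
Your ``Reduction'' step is sound: the adjunction calculus producing $\tilde a\coloneqq L_k c$ and $\tilde m\coloneqq\eta_c\circ\hat m$ from data $(c,\hat m,\pi,\pi')$, using \ref{path-emb}, smoothness of $P$ witnessed by $m$, and cancellation of embeddings, is correct, and it is close in spirit to what the paper does. The genuine gap is in ``Constructing $c$''. Your saturation condition is formulated with respect to the isomorphism type of the \emph{carried path} only: ``whenever $n\in D$ and some node of $\Path{(R_k a)}$ isomorphic to $n$ has a child of type $\tau$, then $n$ has a child in $D$ of type $\tau$''. But the set of child types of a node of $\Path{(R_k a)}$ is not determined by the isomorphism class of its carried path, so this condition is in general unsatisfiable by \emph{any} $D\subseteq\Path{(R_k a)}$, finite or not. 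Concretely, in the Ehrenfeucht-\Fraisse{} adjunction with $\sg=\{R\}$ and $k=2$, let $a$ be a disjoint union of infinitely many directed edges $x_i\mathrel{R}y_i$ (infinite, so that $c=R_k a$ is not an escape). The height-one nodes $\{[x_i]\}$ and $\{[y_i]\}$ have isomorphic carried paths, yet only $\{[x_i]\}$ has a child whose carried path contains a ``forward'' $R$-edge and only $\{[y_i]\}$ one with a ``backward'' $R$-edge. Any downward-closed $D$ containing $\{[x_i]\}$ (as it must, if $m$ lies above such a node) would be required by your condition to contain a child of $\{[x_i]\}$ of a type that simply does not occur above $\{[x_i]\}$. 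Hence the type-saturated finite $D$, and with it the total retraction $\rho$ and the morphism $\pi'$, need not exist, and the bounds you invoke (bounded height, finitely many carried-path types of children) do not repair this: the obstruction is satisfiability of saturation, not finiteness.

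The paper's proof fixes exactly this point: the type $T_p$ of a node $p$ is the (reduced) isomorphism type of the \emph{whole} subtree ${\uparrow}p\subseteq\Path{(R_k a)}$, labelled by carried-path types, so that every child type of $T_p$ is by definition realised above $p$ itself; \ref{paths-finite} makes the reduced type tree $T_\bot$ finite, and the representatives $m_x$ are chosen by induction so that the type of $m_x$ is exactly ${\uparrow}x\subseteq T_\bot$, with $m$ among them. With that finer notion your subobject strategy could in fact be salvaged (the compatibility of your cocone really does follow from Lemma~\ref{l:arboreal:properties}\ref{at-most-one-emb}), but note that the paper deliberately avoids your stronger intermediate claim: instead of a morphism $R_k a\to c$ onto a finitely generated subobject $c\emb R_k a$, it only builds a morphism $R_k a\to R_k\tilde a$ with $\tilde a=L_k(\colim D)$, defining the cocone by factoring the chosen representatives through $R_k f$; the extra room in $R_kL_k(\colim D)$ is what makes the inductive definition of the cocone go through with no retraction requirement. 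As written, then, your Step~2 does not establish the existence of $c$, $\hat m$, $\pi$, $\pi'$, and the proof is incomplete at precisely the point where the paper's type apparatus does the work.
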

\begin{proof}
Fix an arbitrary path embedding $m\colon P\emb R_k a$. By \ref{paths-finite}, there is a finite set of paths $\mathscr{P}=\{P_1,\ldots,P_j\}\subseteq \Cp^k$ such that each path in $\C_k$ is isomorphic to exactly one member of $\mathscr{P}$. We can assume without loss of generality that $P\in\mathscr{P}$.

For each path embedding $p\in \Path{(R_k a)}$, denote by $T_p$ the tree obtained by first considering the tree ${\uparrow} p\subseteq \Path{(R_k a)}$ and then replacing each node $q$ (which is an isomorphism class of a path embedding) with the unique path $P_i\in\mathscr{P}$ such that $P_i\cong \dom(q)$. We assume that $T_p$ is \emph{reduced}, \ie given any two nodes $x$ and $y$ of $T_p$ that cover the same node, if the trees ${\uparrow} x$ and ${\uparrow} y$ are equal then $x=y$. (If $T_p$ is not reduced, we can remove branches in the obvious manner to obtain a maximal reduced subtree $T'_p$.)
We refer to $T_p$ as the \emph{type} of $p$; note that this is a finite tree. 
In particular, if $\bot$ is the root of $\Path{(R_k a)}$, we get a finite tree $T_{\bot}$.

Now, for each node $x$ of $T_\bot$, we shall define a path embedding $m_x$ into $R_k a$ whose domain belongs to $\mathscr{P}$. The definition of $m_x$ is by induction on the height of $x$.
Suppose $x$ has height $0$, \ie $x$ is the root of $T_\bot$. Then $x=P_i$ for a unique $i\in \{1,\ldots,j\}$. Define $m_x$ as the restriction of $m$ to $P_i$, \ie the composition of $m\colon P\emb R_k a$ with the unique embedding $P_i\emb P$. Next, suppose $m_z$ has been defined for all nodes $z$ of height at most $l$, and let $x$ be a node of height $l+1$ labeled by some $P_j$. We distinguish two cases:
\begin{itemize}
\item If there is a node $y\geq x$ such that $T_m$ coincides with the tree ${\uparrow} y\subseteq T_{\bot}$, then we let $m_x$ be the restriction of $m$ to $P_j$. 
Note that, in this case, the type of $m_x$ coincides with the tree ${\uparrow} x\subseteq T_{\bot}$. Moreover, if $z$ is the predecessor of $x$ then $m_z$ will also be an appropriate restriction of $m$, and thus $m_x$ extends $m_z$.
\item Otherwise, we let $m_x\colon P_j\emb R_k a$ be any path embedding such that:
\begin{enumerate}[label=(\roman*)]
\item The type of $m_x$ coincides with the tree ${\uparrow} x\subseteq T_{\bot}$.
\item $m_x$ extends $m_z$, where $z$ is the predecessor of $x$.
\end{enumerate} 
Note that such an embedding $m_x$ exists because $x\in {\uparrow} z\subseteq T_{\bot}$ and, by inductive hypothesis, ${\uparrow} z$ coincides with the type of $m_z$.
\end{itemize}

The set 
\[
V\coloneqq \{m_x\mid x\in T_\bot\}
\]
is finite and contains $m$. We regard $V$ as a cocone over a finite diagram $D$ of paths and embeddings between them. Let $\tilde{a}\coloneqq L_k(\colim D)$ and note that $\tilde{a}\in L_k[\o{\C}_k]$. The functor $L_k$ preserves colimits because it is left adjoint, hence $\tilde{a}$ is the colimit in $\E$ of the diagram $L_k D$.
The cocone $\{n^\#\mid n\in V\}$ with vertex $a$ over $L_k D$ then factors through a unique morphism $f\colon \tilde{a}\to a$. 
By construction, $m\colon P\emb R_k a$ factors through $R_k f$, and so there is $\tilde{m}\colon P\emb \tilde{a}$ such that $m=R_k f\circ \tilde{m}$. Hence, $\tilde{m}\to m$.

Next, with the aim of showing that $m\to \tilde{m}$, we shall define a morphism ${R_k a \to R_k \tilde{a}}$. As $R_k a$ is path generated, it suffices to define a cocone 
\[
W=\{\phi_p\mid p\in \Path{(R_k a)}\}
\] 
with vertex $R_k \tilde{a}$ over the diagram of path embeddings into $R_k a$. Suppose $p\in \Path{(R_k a)}$. We define the corresponding arrow $\phi_p$  by induction on the height of $p$:
\begin{enumerate}[label=(\roman*)]
\item If $p$ is the root of $\Path{(R_k a)}$, then it factors through $R_k f\colon R_k \tilde{a}\emb R_k a$, and so it yields an embedding $\phi_p\colon \dom(p)\emb R_k \tilde{a}$.
\item Suppose that $p$ has height $l+1$ and $\phi_q$ has been defined whenever $q$ has height at most $l$. We distinguish two cases: if $p$ factors through $R_k f\colon R_k \tilde{a}\emb R_k a$, \ie $p= R_k f\circ s_p$ for some embedding $s_p$, then we set $\phi_p\coloneqq s_p$. This is the case, in particular, when $p\leq m$ in $\Path{(R_k a)}$. Clearly, if $p$ extends $q$ then $\phi_p$ extends~$\phi_q$.

Otherwise, let $q$ be such that $p\succ q$. By inductive hypothesis, we can suppose that $R_k f\circ \phi_q$ coincides with an embedding $m_x\colon P_j\emb R_k a$ in $V$ (up to an isomorphism $\dom(q)\cong P_j$) whose type coincides with the tree ${\uparrow} x\subseteq T_{\bot}$. As $q$ corresponds to a node $y$ covering $x$ labeled by some $P_h\cong \dom(p)$, by definition of $V$ there is an embedding $m_y\colon P_h\emb R_k a$ in $V$ such that $m_y$ extends $m_x$, and the type $m_y$ coincides with ${\uparrow} y$. Since $m_y$ factors through $R_k f$, precomposing with the isomorphism $\dom(p)\cong P_h$ we get an embedding $\phi_p\colon \dom(p)\emb R_k \tilde{a}$. Observe that $\phi_p$ extends $\phi_q$.
\end{enumerate} 
The compatibility condition for the cocone $W$ states that $\phi_p$ extends $\phi_q$ whenever $p$ extends $q$, which is ensured by the definition above.
Thus, $W$ induces a morphism $g\colon R_k a \to R_k \tilde{a}$ and, by construction, $g\circ m= \tilde{m}$. Hence, $m\to \tilde{m}$.
\end{proof}

The construction of $k$-extendable covers is akin to that of $\omega$-saturated elementary extensions in model theory, where one starts with a first-order structure $M$ and constructs an elementary extension $M_1$ of $M$ in which all types over (finite subsets of) $M$ are realised, then an elementary extension $M_2$ of $M_1$ in which all types over $M_1$ are realised, and so forth. The union of the induced elementary chain of models yields the desired $\omega$-saturated elementary extension of $M$.

In the same spirit, we introduce a notion of $k$-extendability relative to a homomorphism, which models the one-step construction just outlined.

\begin{definition}\label{def:relative-extendability}
Let $h\colon a\to b$ be an arrow in $\E$. We say that $b$ is \emph{$k$-extendable relative to $h$} if the following property is satisfied for all $e\in L_k[\o{\C}_k]$: For all path embeddings $m\colon P\emb R_k a$ and $n\colon P\emb R_k e$ such that $\co{m}\rl \co{n}$,
\[\begin{tikzcd}[column sep=2em]
{} & P \arrow[rightarrowtail]{dl}[swap]{\co{m}} \arrow[rightarrowtail]{dr}{\co{n}} & {} \\
\Sg{m} \arrow[yshift=3pt]{rr} & & \Sg{n} \arrow[yshift=-3pt]{ll}
\end{tikzcd}\]
if $n'\colon Q\emb R_k e$ is a path embedding such that $n\leq n'$ in $\Path{(R_k e)}$, there is a path embedding $m'\colon Q\emb R_k b$ such that the leftmost diagram below commutes and $\co{m'}\rl \co{n'}$.
\begin{center}
\begin{tikzcd}
P \arrow[rightarrowtail]{r}{!} \arrow[rightarrowtail]{d}[swap]{m} & Q \arrow[rightarrowtail,dashed]{d}{m'} \\
R_k a \arrow{r}{R_k h} & R_k b
\end{tikzcd}
\ \ \ \ \ \ \ 
\begin{tikzcd}[column sep=2em]
{} & Q \arrow[rightarrowtail]{dl}[swap]{\co{m'}} \arrow[rightarrowtail]{dr}{\co{n'}} & {} \\
\Sg{m'} \arrow[yshift=3pt,dashed]{rr} & & \Sg{n'} \arrow[yshift=-3pt,dashed]{ll}
\end{tikzcd}
\end{center}
\end{definition}

Suppose that, given an object $a\in\E$, we are able to construct a section $s\colon a\to b$ such that $b$ is $k$-extendable relative to $s$. Iterating this process countably many times, we obtain a $k$-extendable cover $a\to a^*$, thus settling Theorem~\ref{t:model-construction}. The main hurdle consists in establishing the following proposition; a proof is offered in Section~\ref{s:proof-one-step-ext}.
\begin{proposition}\label{pr:relative-extension}
For all $a\in\E$ and all $k > 0$ there is a section $s\colon a\to b$ such that $b$ is $k$-extendable relative to $s$.
\end{proposition}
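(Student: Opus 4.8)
The plan is to construct $b$ as an amalgam over $a$ of one ``local extension'' per demand imposed by relative $k$-extendability; Theorem~\ref{t:model-construction} then follows by iterating this construction $\omega$ times, in the same way that an $\omega$-saturated elementary extension is assembled from its one-step approximations. First I would note that the demands form a set: by Definition~\ref{def:relative-extendability} a demand is a tuple $(e,P,Q,u,m,n,n')$ with $e\in L_k[\o{\C}_k]$, paths $P,Q\in\Cp^k$ together with the (by~\ref{paths-finite} unique) embedding $u\colon P\emb Q$, and path embeddings $m\colon P\emb R_k a$, $n\colon P\emb R_k e$, $n'\colon Q\emb R_k e$ with $n\leq n'$ in $\Path{(R_k e)}$ and $\co m\rl\co n$ in $P/{\C_k}$; by~\ref{paths-finite} and Remark~\ref{rem:finitely-many-finite-colim} there are finitely many choices of $P,Q,u,e$ up to isomorphism, while for fixed $a,e$ the posets $\Path{(R_k a)}$ and $\Path{(R_k e)}$ are sets, so the collection $\mathscr{I}$ of demands is a set.

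Next, for a fixed demand I would build a witnessing section $s_\iota\colon a\to b_\iota$. The paths $P,Q$ are smooth, being domains of path embeddings into objects in the image of $R_k$, so~\ref{path-emb} makes the transposes $m^\#$, $n^\#$, $(n')^\#$ embeddings; since $n=n'\circ u$ yields $n^\#=(n')^\#\circ L_k u$ by naturality of the transpose, Lemma~\ref{l:factorisation-properties}\ref{cancellation-m} shows that $L_k u\colon L_k P\emb L_k Q$ is an embedding. The hypothesis $\co m\rl\co n$ then supplies a morphism $\mu\colon Q\to R_k a$ with $\mu\circ u=m$: write $n'=\inc n\circ n'_0$ for the path embedding $n'_0\colon Q\emb\Sg n$ obtained from $n\leq n'$, and put $\mu\coloneqq\inc m\circ\tau\circ n'_0$ with $\tau\colon\Sg n\to\Sg m$ witnessing $\co n\to\co m$. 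I would then let $b_\iota$ be the pushout of the span $a\xleftarrow{m^\#}L_k P\xrightarrow{L_k u}L_k Q$ in $\E$: by~\ref{lim-colim}--\ref{fact-syst} it exists and both coprojections $s_\iota\colon a\emb b_\iota$ and $t_\iota\colon L_k Q\emb b_\iota$ are embeddings; the arrow $b_\iota\to a$ induced by $\id_a$ and $\mu^\#$ (which agree on $L_k P$ because $\mu^\#\circ L_k u=(\mu\circ u)^\#=m^\#$) splits $s_\iota$; and $m'\coloneqq t_\iota^\flat\colon Q\emb R_k b_\iota$ is, by the smoothness clause of~\ref{path-emb}, a path embedding, which extends $m$ along $s_\iota$ by naturality of the transpose.

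The crux is to verify that the witness meets the demand, \iec $\co{m'}\rl\co{n'}$ in $Q/{\C_k}$; this is where the path restriction property~\ref{path-restriction-prop} is indispensable. The plan is to compute the subobject $\Sg{m'}\emb R_k b_\iota$ by pulling the sub-paths of $Q$ back through the embedding $t_\iota\colon L_k Q\emb b_\iota$ --- using Definition~\ref{def:path-re}\ref{path-re-1}--\ref{path-re-2} together with the fact, from~\ref{fact-syst}, that pushout squares of embeddings are pullbacks and are stable under pullback along embeddings --- so as to exhibit $\Sg{m'}$ as an amalgam built out of $\Sg m$ and the local picture of $R_k e$ around $n'$; feeding this description into the maps $\Sg m\rl\Sg n$ coming from $\co m\rl\co n$ and into $(n')^\#$ should then produce morphisms $\Sg{m'}\rl\Sg{n'}$ witnessing $\co{m'}\rl\co{n'}$. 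Granting this, I would form the wide pushout $b$ of all the sections $s_\iota\colon a\emb b_\iota$, $\iota\in\mathscr{I}$, over $a$: by~\ref{fact-syst} the canonical map $s\colon a\to b$ is again a section (split by the map induced by $\id_a$ and the retractions $b_\iota\to a$), each coprojection $b_\iota\to b$ is an embedding by a colimit argument from~\ref{fact-syst}, so every demand realized in $b_\iota$ remains realized in $b$, and finite presentability of the objects $L_k P$ (axiom~\ref{paths-Lk-fp}) ensures the relevant maps are detected at a finite stage of the colimit. This gives a section $s\colon a\to b$ with $b$ $k$-extendable relative to $s$.

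I expect the main obstacle to be precisely the realization step: establishing $\co{m'}\rl\co{n'}$, and in particular the ``backward'' morphism $\Sg{n'}\to\Sg{m'}$, which forces $R_k b_\iota$ to carry a faithful copy of the branching of $R_k e$ above $n'$. This is exactly the information the path restriction property is designed to make available, but turning the abstract clauses~\ref{path-re-1} and~\ref{path-re-2} of Definition~\ref{def:path-re} into a workable description of $\Sg{m'}$ requires careful bookkeeping of how $L_k$ of a path sits inside a pushout. The remaining steps --- existence of the amalgam over $a$, the splittings, and persistence of realizations to the colimit --- are routine manipulations with~\ref{lim-colim}--\ref{fact-syst} and~\ref{paths-Lk-fp}.
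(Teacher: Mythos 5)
Your overall architecture --- one local extension per demand, each a section over $a$, assembled by a wide pushout whose canonical map $a\to b$ is again a section --- is the same as the paper's, and several of your auxiliary checks are correct (the retraction splitting $s_\iota$, the fact that $m'\coloneqq t_\iota^\flat$ is a path embedding by the smooth half of~\ref{path-emb}, the commuting square, smallness of the indexing diagram via~\ref{paths-finite} and Remark~\ref{rem:finitely-many-finite-colim}). The gap is in what you glue in. You push out along $L_ku\colon L_kP\emb L_kQ$, i.e.\ you add to $a$ only the single path $Q$. But the demand requires $\co{m'}\rl\co{n'}$ in $Q/{\C_k}$, and in particular a morphism $\Sg{n'}\to\Sg{m'}$ over $Q$: the whole branching of $R_ke$ around $n'$ (all path embeddings of $R_ke$ comparable with $n'$, not just $Q$ itself) must map into $R_kb_\iota$, compatibly, landing around the new path $m'$. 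Nothing in $b_\iota=a+_{L_kP}L_kQ$ provides this. The only map the hypotheses hand you, namely $\Sg{n'}\emb\Sg{n}\to\Sg{m}\emb R_ka\to R_kb_\iota$ coming from $\co{m}\rl\co{n}$, restricts on $Q$ to a path lying in the image of $a$, not to the freshly glued $m'$, so it is not an arrow $\co{n'}\to\co{m'}$; and in general no such arrow exists (already in the Ehrenfeucht-\Fraisse\ case, if $e$ has successors above $n'$ that neither $a$ nor the glued copy of $Q$ can accommodate). The paper's construction avoids this precisely by pushing out along $L_k\co{n}\colon L_kP\emb L_k\Sg{n}$, i.e.\ by gluing in the image under $L_k$ of the \emph{entire} subobject $\Sg{n}\emb R_ke$; one pushout, depending only on the pair $(m,n)$, then serves all extensions $n'\geq n$ at once, and the direction you call ``backward'' becomes the easy one: $\lambda^\flat$ restricted along $\Sg{n'}\leq\Sg{n}$ gives $\co{n'}\to\xi$, whence $\co{n'}\to\co{\xi}$ by Remark~\ref{rem:idempotent}.

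Relatedly, you misassign the role of~\ref{path-restriction-prop}. The path restriction property creates no structure in $b_\iota$ and cannot ``force $R_kb_\iota$ to carry a faithful copy of the branching of $R_ke$ above $n'$''; it only allows one to put a path structure on subobjects of $L_k$ of a smooth path. In the paper it is used for the \emph{other} direction --- constructing $\Sg{\xi}\to R_ke$ (hence $\co{\xi}\to\co{n'}$) --- by taking each path embedding $q$ into $R_k(a+_{L_kP}L_k\Sg{n})$ comparable with the new path, decomposing $L_k\dom(q)$ via pullbacks against the pushout square (using~\ref{fact-syst}: pushouts of embeddings are pullbacks and stable under pullback along embeddings), endowing the $a$-part with a path structure $Q_{\o{a}}$ by~\ref{path-restriction-prop}, mapping it into $\Sg{n}$ through the comparison $\Sg{m}\to\Sg{n}$, and gluing with $\epsilon_e\circ L_k\inc{n}$ on the $\Sg{n}$-part via the universal property of the pushout. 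So even if your deferred ``crux'' computation were carried out, it would be verified against an object that is too small; the fix is to replace $L_kQ$ by $L_k\Sg{n}$ in the pushout and index the wide pushout by the pairs $(m,n)$ (in the paper, pairs $(u,v)$ with $L_k\co{v}\to u^\#$), after which your iteration and colimit arguments go through as in the paper.
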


We can finally prove Theorem~\ref{t:model-construction}:
\begin{proof}[Proof of Theorem~\ref{t:model-construction}]
Let $a\in \E$. By applying Proposition~\ref{pr:relative-extension} repeatedly, we obtain a chain of sections
\[\begin{tikzcd}
a \arrow{r}{s_1} & b_1 \arrow{r}{s_2} & b_2 \arrow{r}{s_3} & b_3 \arrow{r}{s_4} & \cdots 
\end{tikzcd}\]
such that $b_i$ is $k$-extendable relative to $s_i$, for all $i\geq 1$. Denote the previous diagram by $D$ and let $a^*$ be the colimit of $D$ in $\E$, which exists by~\ref{lim-colim}. Let $h_i\colon b_i\to a^*$ be the colimit map with domain $b_i$, and $s\colon a\to a^*$ the one with domain~$a$. As all the arrows in $D$ are sections, so are the colimit maps; in particular, $s$ is a section. 

We claim that $a^*$ is $k$-extendable. Suppose $m\colon P\emb R_k(a^*)$ and $n\colon P\emb R_k e$ are path embeddings such that $\co{m}\rl \co{n}$. 
By Lemma~\ref{l:fg-equiv-finite}, we can assume without loss of generality that $e\in L_k[\o{\C}_k]$.

By~\ref{paths-Lk-fp}, $L_k P$ is finitely presentable in $\E$ and so $m^\#\colon L_k P \to a^*$ factors through one of the colimit maps. Assume without loss of generality that $m^\#$ factors through $h_j \colon b_j \to a$ for some $j\,{\geq}\, 1$, so there is an arrow $r\colon L_k P\to b_j$ satisfying $m^\# = h_j \circ r$. If $m_j\coloneqq r^\flat$, it follows that $m = R_k h_j \circ m_j$. In particular, $m_j$ is an embedding because so is $m$. Since $R_k h_j$ is a section, Remark~\ref{rem:sections-coslice} entails $m_j\rl m$, and so $\co{(m_j)}\rl \co{m}$ by Lemma~\ref{l:corestriction-properties}\ref{corestriction-arrows}. Because $\co{m}\rl\co{n}$, also $\co{(m_j)}\rl \co{n}$.

Now, let $n'\colon Q\emb R_k e$ be any path embedding such that $n\leq n'$ in~$\Path{(R_k e)}$. Since $b_{j+1}$ is $k$-extendable relative to $s_{j+1}$, there is a path embedding $m'\colon Q\emb R_k b_{j+1} $ such that $\co{m'}\rl \co{n'}$ and the following diagram commutes.
\begin{equation*}
\begin{tikzcd}[column sep=4em]
P \arrow[rightarrowtail]{r}{!} \arrow[rightarrowtail]{d}[swap]{m_j} & Q \arrow[rightarrowtail]{d}{m'} \\
R_k b_j \arrow{r}{R_k s_{j+1}} & R_k b_{j+1}
\end{tikzcd}
\end{equation*}
It follows that 
\[
m = R_k h_j \circ m_j = R_k h_{j+1} \circ R_k s_{j+1} \circ m_j = R_k h_{j+1} \circ m'\circ {!}
\]
and so $m''\coloneqq R_k h_{j+1} \circ m'\colon Q\emb R_k(a^*)$ satisfies $m\leq m''$ in $\Path{(R_k(a^*))}$. Again by Remark~\ref{rem:sections-coslice} and Lemma~\ref{l:corestriction-properties}\ref{corestriction-arrows} we get $\co{m''}\rl \co{m'}$, and thus $\co{m''}\rl \co{n'}$. This shows that $a^*$ is $k$-extendable. 
\end{proof}

\subsection{Proof of Proposition~\ref{pr:relative-extension}}\label{s:proof-one-step-ext}
%
Fix an object $a$ of $\E$ and a positive integer $k$.
To improve readability, we drop the subscript from $L_k$ and $R_k$, and simply write $L$ and $R$ (but continue to denote by $\C_k$ the arboreal category).
We must find a section $s\colon a\to b$ such that $b$ is $k$-extendable relative to $s$. Consider all pairs of path embeddings
\[
(u\colon P\emb Ra, v\colon P\emb Re)
\] 
in $\C_k$ such that $e\in L[\o{\C}_k]$ and $L\co{v}\to u^\#$ in $LP/{\E}$. 
\begin{remark}
Note that $L\co{v}\to u^\#$ entails that $L\co{v}$ is an embedding. Just observe that $u^\#$ is an embedding by the first part of~\ref{path-emb}.
\end{remark}
Each such pair $(u,v)$ induces a pushout square in $\E$ as follows.
\[\begin{tikzcd}[column sep=3em]
L P \arrow[rightarrowtail]{d}[swap]{u^\#} \arrow[rightarrowtail]{r}{L\co{v}} & L\Sg{v} \arrow[rightarrowtail]{d}{\lambda_{(u,v)}} \\
a \arrow[rightarrowtail]{r}{\iota_{(u,v)}} & a +_{L P} L \Sg{v} \arrow[ul, phantom, "\ulcorner", very near start]
\end{tikzcd}\]
This pushout square exists by~\ref{lim-colim} and consists entirely of embeddings by virtue of~\ref{fact-syst}.
\begin{lemma}\label{l:iota-sections}
$\iota_{(u,v)}$ is a section. 
\end{lemma}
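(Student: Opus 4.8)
The plan is to read the retraction straight off the universal property of the pushout square defining $a +_{LP} L\Sg{v}$, feeding in the hypothesis on the pair $(u,v)$.

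First I would unpack the assumption $L\co{v}\to u^\#$ in $LP/{\E}$: by definition of the coslice category, it amounts to the existence of a morphism $f\colon L\Sg{v}\to a$ in $\E$ with $f\circ L\co{v} = u^\#$. Next I would form the cocone over the span $a\xleftarrow{\;u^\#\;} LP \xrightarrow{\;L\co{v}\;} L\Sg{v}$ with vertex $a$ whose legs are $\id_a\colon a\to a$ and $f\colon L\Sg{v}\to a$; this is a legitimate cocone since $\id_a\circ u^\# = u^\# = f\circ L\co{v}$. As $a +_{LP} L\Sg{v}$ is the pushout of this span, there is a unique morphism $r\colon a +_{LP} L\Sg{v}\to a$ with $r\circ \iota_{(u,v)} = \id_a$ (and $r\circ\lambda_{(u,v)} = f$). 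The first equation exhibits $r$ as a left inverse of $\iota_{(u,v)}$, so $\iota_{(u,v)}$ is a section.

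I do not expect any genuine obstacle here: the statement is an immediate application of the pushout's universal property, and all the real work lies upstream, in having selected precisely those pairs $(u,v)$ for which $L\co{v}\to u^\#$ holds in $LP/{\E}$. The only ambient input used is the existence of the pushout, already guaranteed by~\ref{lim-colim}; the stability conditions in~\ref{fact-syst} are needed elsewhere (to see that the square consists of embeddings) but not for this lemma.
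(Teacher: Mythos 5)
Your proof is correct and is essentially identical to the paper's: both unpack $L\co{v}\to u^\#$ as a morphism $L\Sg{v}\to a$ commuting with $u^\#$, then invoke the universal property of the pushout applied to the cocone with legs $\id_a$ and that morphism to produce the retraction of $\iota_{(u,v)}$. Nothing to add.
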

\begin{proof}
Just observe that, since $L\co{v}\to u^\#$, there is $g\colon L \Sg{v} \to a$ such that ${g\circ L\co{v} = u^\#}$. By the universal property of the pushout, there is an arrow $h\colon a +_{L P} L \Sg{v}\to a$ such that $h\circ \iota_{(u,v)}$ is the identity of $a$. 
\end{proof}

We let $D$ be the diagram in $\E$ consisting of all the morphisms 
\[
\iota_{(u,v)}\colon a\to a +_{L P} L \Sg{v}
\]
as above. Because $\C_k$ is locally finite and $e$ varies among the objects of $L[\o{\C}_k]$, choosing representatives for isomorphism classes in an appropriate way we can assume by Remark~\ref{rem:finitely-many-finite-colim} that $D$ is a small diagram. 

By~\ref{lim-colim}, $D$ admits a colimit $b\coloneqq \colim D$. In other words, $b$ is obtained as a \emph{wide pushout} in $\E$. Denote by $s\colon a \to b$ the colimit map with domain $a$, and by 
\[
t_{(u,v)}\colon a +_{L P} L \Sg{v}\to b
\] 
the colimit map corresponding to the arrow $\iota_{(u,v)}$. As all arrows in $D$ are sections by Lemma~\ref{l:iota-sections}, so are the colimit maps. In particular, $s\colon a \to b$ is a section.

We claim that $b$ is $k$-extendable relative to $s$, thus settling Proposition~\ref{pr:relative-extension}. Assume we are given path embeddings $m\colon P\emb Ra$ and $n\colon P\emb Re$, with $e\in L[\o{\C}_k]$, such that $\co{m}\rl\co{n}$ as displayed in the leftmost diagram below.
\begin{equation*}
\begin{tikzcd}[column sep=2em]
{} & P \arrow[rightarrowtail]{dl}[swap]{\co{m}} \arrow[rightarrowtail]{dr}{\co{n}} & {} \\
\Sg{m} \arrow[yshift=3pt]{rr}{f} & & \Sg{n} \arrow[yshift=-3pt]{ll}{g}
\end{tikzcd}
\ \ \ \ \ \ \ 
\begin{tikzcd}[column sep=2em]
{} & L P \arrow[rightarrowtail]{dl}[swap]{m^{\#}} \arrow[rightarrowtail]{dr}{L \co{n}} & {} \\
a & & L \Sg{n} \arrow{ll}[swap]{\inc{m}^\#\circ Lg}
\end{tikzcd}
\end{equation*}
If $\inc{m}\colon \Sg{m}\emb Ra$ is the canonical embedding, we get a commutative triangle as on the right-hand side above. Just observe that
\[
\inc{m}^\#\circ Lg \circ L\co{n} = \inc{m}^\#\circ L\co{m} = (\inc{m} \circ \co{m})^\# = m^\#. 
\] 
Hence $L\co{n}\to m^\#$. Let 
\[
\iota_{(m, n)}\colon a \to a +_{L P} L \Sg{n}
\]
be the corresponding arrow in the diagram $D$. 
To improve readability we shall write, respectively, $\iota$, $\lambda$ and $t$ instead of $\iota_{(m, n)}$, $\lambda_{(m, n)}$ and $t_{(m, n)}$.

Let $n'\colon Q\emb R e$ be a path embedding with $n\leq n'$ in $\Path{(R e)}$. 
We must exhibit a path embedding $m'\colon Q\emb R b$ such that the leftmost square below commutes and $\co{m'}\rl \co{n'}$.
\begin{equation}\label{eq:two-prop-rel}
\begin{tikzcd}
P \arrow[rightarrowtail]{r}{!} \arrow[rightarrowtail]{d}[swap]{m} & Q \arrow[rightarrowtail,dashed]{d}{m'} \\
R a \arrow{r}{R s} & R b
\end{tikzcd}
\ \ \ \ \ \ \ 
\begin{tikzcd}[column sep=2em]
{} & Q \arrow[rightarrowtail]{dl}[swap]{\co{m'}} \arrow[rightarrowtail]{dr}{\co{n'}} & {} \\
\Sg{m'} \arrow[yshift=3pt,dashed]{rr} & & \Sg{n'} \arrow[yshift=-3pt,dashed]{ll}
\end{tikzcd}
\end{equation}
Note that, because $n\leq n'$, we get $\Sg{n'}\leq \Sg{n}$ in $\Emb{Re}$. Thus, $\co{n'}$ can be identified with a path embedding into $\Sg{n}$.
Consider the arrow 
\[
\xi\coloneqq (\lambda\circ L \co{n'})^\flat\colon Q \to R(a +_{L P} L\Sg{n}).
\]

\begin{lemma}\label{l:xi-emb}
$\xi$ is an embedding.
\end{lemma}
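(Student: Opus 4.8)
The plan is to deduce the statement from axiom~\ref{path-emb}, which lets us replace $\xi$ by its transpose $\xi^{\#}\colon LQ \to a +_{LP} L\Sg{n}$ and argue in the extensional category $\E$, where everything is a short manipulation of embeddings.

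First I would record that $Q$ is a \emph{smooth} path: the map $n'\colon Q \emb Re$ is by hypothesis a path embedding in $\C_k$, so $Q\in\Cp^k$ and $n'$ itself witnesses that $Q$ is smooth. Hence the converse half of~\ref{path-emb} is available, and it suffices to prove that $\xi^{\#}$ is an embedding. Since $(-)^{\flat}$ and $(-)^{\#}$ are mutually inverse, $\xi^{\#} = \lambda \circ L\co{n'}$, where (with the identification made just before the statement) $\co{n'}$ is regarded as the path embedding $Q \emb \Sg{n}$ obtained by composing the corestriction $Q \emb \Sg{n'}$ with the embedding $\Sg{n'} \emb \Sg{n}$ induced by $\Sg{n'} \le \Sg{n}$ in $\Emb{Re}$; in particular $\inc{n} \circ \co{n'} = n'$.

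The key step is then to show that $L\co{n'}$ is an embedding. By naturality of the transpose, $(n')^{\#} = (\inc{n} \circ \co{n'})^{\#} = \inc{n}^{\#} \circ L\co{n'}$. Now $n'$, being a path embedding, is in particular an embedding $Q \to Re$, so by the forward direction of~\ref{path-emb} its transpose $(n')^{\#}$ is an embedding; by the right-cancellation property of embeddings (Lemma~\ref{l:factorisation-properties}\ref{cancellation-m}), $L\co{n'}$ is an embedding. Since $\lambda = \lambda_{(m,n)}$ is a leg of the pushout square defining $a +_{LP} L\Sg{n}$, which consists entirely of embeddings by~\ref{fact-syst}, the composite $\xi^{\#} = \lambda \circ L\co{n'}$ is a composite of embeddings, hence an embedding by Lemma~\ref{l:factorisation-properties}\ref{compositions}. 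Applying the converse half of~\ref{path-emb} to the smooth path $Q$, we conclude that $\xi$ is an embedding.

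I do not expect a genuine obstacle here: the only care required is bookkeeping, namely keeping track of the identification of $\co{n'}$ as a morphism into $\Sg{n}$ (as used in the definition of $\xi$) and applying the naturality identity $(g_1 \circ g_2)^{\#} = g_1^{\#} \circ L g_2$ with $g_1 = \inc{n}$ and $g_2 = \co{n'}$. The whole content of the argument is that, because $Q$ is smooth, $\xi$ is an embedding if and only if $\xi^{\#}$ is, and $\xi^{\#}$ is visibly a composite of two embeddings.
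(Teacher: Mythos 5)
Your proof is correct and follows essentially the same route as the paper: use the first half of~\ref{path-emb} on $n'$ plus right-cancellation of embeddings to see that $L\co{n'}$ is an embedding, note $\lambda$ is an embedding by~\ref{fact-syst}, and then apply the converse half of~\ref{path-emb} to the smooth path $Q$ to transfer back from $\xi^{\#}=\lambda\circ L\co{n'}$ to $\xi$. The only (harmless) difference is bookkeeping: the paper cancels via $(n')^{\#}=\epsilon_e\circ Ln'$ and then $Ln'=L\inc{n}\circ L\co{n'}$, whereas you cancel directly from $(n')^{\#}=\inc{n}^{\#}\circ L\co{n'}$, and you make the smoothness of $Q$ explicit where the paper leaves it implicit.
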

\begin{proof}
By the first part of~\ref{path-emb}, $(n')^\#$ is an embedding. Then $Ln'$ is an embedding since $(n')^\# = \epsilon_e \circ Ln'$, and so is $L\co{n'}$. It follows that $\lambda\circ L \co{n'}$ is an embedding because it is a composition of embeddings, and $\xi$ is an embedding by the second part of~\ref{path-emb}.
\end{proof}
Lemma~\ref{l:xi-emb}, combined with the fact that $Rt$ is a section (hence an embedding), entails that the composite $m'\coloneqq Rt\circ \xi \colon Q\emb R b$ is an embedding. Moreover
\begin{align*}
m'\circ {!} &= ((m'\circ {!})^{\#})^\flat = ((m')^\# \circ L{!})^\flat = (t\circ \lambda \circ L\co{n'} \circ L{!})^\flat \\
&= (t\circ \lambda \circ L\co{n})^\flat = (t\circ \iota\circ m^\#)^\flat = (s\circ m^\#)^\flat = Rs \circ m,
\end{align*}
showing that the leftmost diagram in equation~\eqref{eq:two-prop-rel} commutes.
Since $Rt$ is a section, we have $\xi\rl m'$ by Remark~\ref{rem:sections-coslice}, and so $\co{\xi}\rl \co{m'}$ by Lemma~\ref{l:corestriction-properties}\ref{corestriction-arrows}. Therefore, in order to show that $\co{m'}\rl \co{n'}$ it suffices to prove that $\co{\xi}\rl \co{n'}$. We have
\[
\lambda^\flat \circ \co{n'} = ((\lambda^\flat \circ \co{n'})^\#)^\flat = (\lambda \circ L\co{n'})^\flat = \xi
\]
and thus $\co{n'}\to \xi$. It follows from Remark~\ref{rem:idempotent} that $\co{n'}\to \co{\xi}$.

It remains to show that $\co{\xi}\to \co{n'}$; the proof of this fact will occupy us for the rest of this section.
As $\C_k$ is an arboreal category, $\Sg{\xi}$ is the colimit of its path embeddings. Thus, in order to define a morphism $\Sg{\xi} \to \Sg{n'}$ it suffices to define a compatible cocone with vertex $\Sg{n'}$ over the diagram of path embeddings into $\Sg{\xi}$. By Lemma~\ref{l:corestriction-properties}\ref{comparable-subtree}, the path embeddings into $\Sg{\xi}$ can be identified with the path embeddings into $R(a +_{L P} L\Sg{n})$ that are comparable with~$\xi$. For each such path embedding $q\colon Q'\emb R(a +_{L P} L\Sg{n})$, we shall define an arrow $\zeta_q\colon Q'\to Re$ and prove that these form a compatible cocone.
We will then deduce, using the induced mediating morphism $\Sg{\xi} \to Re$, that $\co{\xi}\to \co{n'}$.

Fix an arbitrary path embedding $q\colon Q'\emb R(a +_{L P} L\Sg{n})$ above $\xi$ and consider the following diagram in $\E$, where the four vertical faces are pullbacks.
\begin{equation}\label{eq:cube-of-embeddings}
\begin{tikzcd}[row sep=1em, column sep=2.5em]
\o{L P} \arrow[rr,rightarrowtail,"\nu"] \arrow[dr,swap,rightarrowtail,"\mu_1"] \arrow[dd,rightarrowtail,"\mu_2",swap] &&
\o{L\Sg{n}} \arrow[dd,rightarrowtail,"\tau_2",near end] \arrow[dr,rightarrowtail,"\tau_1"] \\
& \o{a} \arrow[rr,crossing over,rightarrowtail,"\sigma_1" near start] &&
LQ'  \\
LP \arrow[rr,rightarrowtail,"L\co{n}", near end] \arrow[dr,rightarrowtail,"m^\#",swap]  && L\Sg{n} \arrow[dr,rightarrowtail,"\lambda"] \\
& a \arrow[rr,rightarrowtail,"\iota"] \arrow[uu,leftarrowtail,crossing over,"\sigma_2", near end] & & a +_{L P} L\Sg{n} \arrow[uu,swap,leftarrowtail,"q^\#"]
\end{tikzcd}
\end{equation}
Note that the previous diagram consists entirely of embeddings because $q^\#$ is an embedding by the first part of~\ref{path-emb}, and the pullback in~$\E$ of an embedding exists by~\ref{lim-colim} and is again an embedding.

Because $q$ is above $\xi$, there is an embedding $Q\emb Q'$, and thus also an embedding $P\emb Q'$. By the universal property of pullbacks, there are unique arrows 
\[
\theta\colon LP\emb \o{a} \ \ \text{ and } \ \  \Delta\colon LP \emb \o{LP}
\] 
making the following diagrams commute.
\begin{equation*}
\begin{tikzcd}[column sep = 3em]
LP \arrow[rr, relay arrow=2ex, rightarrowtail, "L!"] 
\arrow[bend right=30,rightarrowtail]{dr}[swap, description]{m^{\#}} \arrow[dashed,rightarrowtail]{r}{\theta} & \o{a} \arrow[dr, phantom, "\lrcorner", very near start] \arrow[rightarrowtail]{r}{\sigma_1} \arrow[rightarrowtail]{d}[swap]{\sigma_2} & LQ' \arrow[rightarrowtail]{d}{q^\#} \\
{} & a \arrow[rightarrowtail]{r}{\iota} & {a +_{L P} L\Sg{n}}
\end{tikzcd}
\ \ \ \ \ \ \ 
\begin{tikzcd}[column sep = 3em]
LP \arrow[rr, relay arrow=2ex, "\theta", rightarrowtail] 
\arrow[bend right=30]{dr}[swap, description]{\id_{LP}} \arrow[dashed,rightarrowtail]{r}{\Delta} & \o{LP} \arrow[dr, phantom, "\lrcorner", very near start] \arrow[rightarrowtail]{r}{\mu_1} \arrow[rightarrowtail]{d}[swap]{\mu_2} & \o{a} \arrow[rightarrowtail]{d}{\sigma_2} \\
{} & LP \arrow[rightarrowtail]{r}{m^{\#}} & a
\end{tikzcd}
\end{equation*}
Note in particular that $\mu_2$ is a retraction whose right inverse is $\Delta$. As $\mu_2$ is also an embedding, it must be an isomorphism with (two-sided) inverse $\Delta$.

By~\ref{path-restriction-prop} (more precisely, by item~\ref{path-re-1} in Definition~\ref{def:path-re}) there is an arrow $w\colon P\to Q_{\o{a}}$ between paths such that $Lw = \theta$. Hence, we can consider $\sigma_2^\flat\colon Q_{\o{a}}\to Ra$. Note that
\[
(\sigma_2^\flat \circ w)^\# = \sigma_2 \circ Lw = \sigma_2 \circ \theta = m^\#,
\]  
and so $\sigma_2^\flat \circ w = m$. In particular, $w$ is an embedding. As $Q_{\o{a}}$ is a path, $\inc{w}\colon \Sg{w}\emb Q_{\o{a}}$ can be identified with the identity $Q_{\o{a}} \to Q_{\o{a}}$. By Lemma~\ref{l:corestriction-properties}\ref{corestriction-arrows-finer}, there is a unique arrow $\psi_q\colon Q_{\o{a}}\to \Sg{m}$ making the following diagram commute.
\[\begin{tikzcd}
Q_{\o{a}} \arrow[dashed]{rr}{\psi_q} \arrow[rightarrowtail]{dr}[swap]{\sigma_2^\flat} & & \Sg{m} \arrow[rightarrowtail]{dl}{\inc{m}} \\
{} & Ra & {}
\end{tikzcd}\]

\begin{lemma}\label{l:gamma-tilde-triangle}
The following diagram commutes.
\[\begin{tikzcd}
{} & \o{LP} \arrow[rightarrowtail]{dl}[swap]{\mu_1} \arrow[rightarrowtail]{dr}{L\co{n}\circ \mu_2} & {} \\
\o{a} \arrow{rr}{L(f\circ \psi_q)} & & L\Sg{n}
\end{tikzcd}\]
\end{lemma}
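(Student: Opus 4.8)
The plan is to exploit the observation, already recorded just before the statement, that the retraction $\mu_2\colon\o{LP}\to LP$ is an isomorphism with two-sided inverse $\Delta\colon LP\to\o{LP}$. Since $\Delta$ is epic, the identity $L(f\circ\psi_q)\circ\mu_1 = L\co{n}\circ\mu_2$ will follow once we prove it after precomposition with $\Delta$:
\[
L(f\circ\psi_q)\circ\mu_1\circ\Delta \;=\; L\co{n}\circ\mu_2\circ\Delta .
\]
Using the defining properties $\mu_1\circ\Delta = \theta$ and $\mu_2\circ\Delta = \id_{LP}$, the right-hand side is just $L\co{n}$ and the left-hand side is $L(f\circ\psi_q)\circ\theta$, so the lemma reduces to showing $L(f\circ\psi_q)\circ\theta = L\co{n}$. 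Since $\theta = Lw$ and $L$ is a functor, it is then enough to verify the single identity $f\circ\psi_q\circ w = \co{n}$ in $\C_k$ and apply $L$.

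To prove $f\circ\psi_q\circ w = \co{n}$, I would first show $\psi_q\circ w = \co{m}$. By construction of $\psi_q$ we have $\inc{m}\circ\psi_q = \sigma_2^\flat$, and $\sigma_2^\flat\circ w = m$ was established above (from $(\sigma_2^\flat\circ w)^\# = \sigma_2\circ Lw = \sigma_2\circ\theta = m^\#$). Hence $\inc{m}\circ\psi_q\circ w = m = \inc{m}\circ\co{m}$, and cancelling the monomorphism $\inc{m}$ gives $\psi_q\circ w = \co{m}$. Finally, $f$ is the chosen arrow witnessing $\co{m}\to\co{n}$, \ie $f\circ\co{m} = \co{n}$, so $f\circ\psi_q\circ w = f\circ\co{m} = \co{n}$; applying $L$ yields $L(f\circ\psi_q)\circ\theta = L(f\circ\psi_q\circ w) = L\co{n}$, as required.

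I do not expect a genuine obstacle here. The only mild subtlety is that everything hinges on $\mu_2$ being invertible, which is what collapses the triangle to a statement about the composite $\theta = Lw$ postcomposed with known data; and on manipulating the adjunction transposes $(-)^\flat$, $(-)^\#$ correctly (via the naturality identities $(f_1\circ f_2)^\flat = Rf_1\circ f_2^\flat$ and $(g_1\circ g_2)^\# = g_1^\#\circ Lg_2$) in the identity $\sigma_2^\flat\circ w = m$, which has already been carried out in the discussion preceding the statement. Beyond that, the argument is pure bookkeeping with the universal properties attached to the pullback faces of the cube~\eqref{eq:cube-of-embeddings}.
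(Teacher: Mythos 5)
Your proof is correct and is essentially the paper's argument: both establish $\psi_q\circ w=\co{m}$ by cancelling the monomorphism $\inc{m}$ against $\sigma_2^\flat\circ w=m$, compose with $f$ (so $f\circ\psi_q\circ w=\co{n}$), apply $L$ with $\theta=Lw$, and then use the invertibility of $\mu_2$ (the paper precomposes $L(f\circ\psi_q)\circ\theta=L\co{n}$ with $\mu_2$ via $\theta\circ\mu_2=\mu_1$, whereas you precompose the target identity with $\Delta$ --- the same step read in the other direction).
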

\begin{proof}
Note that 
\[\inc{m}\circ \psi_q\circ w = \sigma_2^\flat \circ w = m = \inc{m}\circ \co{m}
\]
and so $\psi_q\circ w = \co{m}$ since $\inc{m}$ is a monomorphism.
Applying the functor $L$ to the outer commutative diagram on the left-hand side below, we obtain the commutative diagram on the right-hand side.
\begin{center}
\begin{tikzcd}[row sep = 2.5em, column sep = 2.5em]
{} & P \arrow[rightarrowtail]{dl}[swap]{w} \arrow[rightarrowtail]{d}[description]{\co{m}} \arrow[rightarrowtail]{dr}{\co{n}} & {} \\
Q_{\o{a}} \arrow{r}{\psi_q} & \Sg{m} \arrow{r}{f} & \Sg{n}
\end{tikzcd}
\ \ \ \ \ \ \ 
\begin{tikzcd}[row sep = 2.5em]
{} & LP \arrow[rightarrowtail]{dl}[swap]{\theta}  \arrow[rightarrowtail]{dr}{L\co{n}} & {} \\
\o{a} \arrow{rr}{L(f\circ \psi_q)} & & L\Sg{n}
\end{tikzcd}
\end{center}
Hence, precomposing with $\mu_2$ we get
\[
L\co{n} \circ \mu_2 = L(f\circ \psi_q) \circ \theta \circ \mu_2 = L(f\circ \psi_q) \circ \mu_1. \qedhere
\]
\end{proof}

For convenience of notation, let us write 
\[
\tilde{\gamma}_q\coloneqq L(f\circ \psi_q)\colon \o{a}\to L\Sg{n} \ \text{ and } \ \gamma_q \coloneqq \epsilon_{e}\circ L\inc{n}\circ \tilde{\gamma}_q\colon \o{a}\to e.
\]
With this notation we have
\begin{align*}
\gamma_q \circ \mu_1 &= \epsilon_{e}\circ L\inc{n}\circ \tilde{\gamma}_q \circ \mu_1 \\
&= \epsilon_{e}\circ L\inc{n}\circ L\co{n}\circ \mu_2 \tag*{Lemma~\ref{l:gamma-tilde-triangle}} \\
&= \epsilon_{e}\circ L\inc{n}\circ \tau_2 \circ \nu
\end{align*}
and so the leftmost diagram below commutes.
\begin{center}
\begin{tikzcd}
\o{L P} \arrow[rightarrowtail]{r}{\nu} \arrow[rightarrowtail]{d}[swap]{\mu_1} & \o{L\Sg{n}} \arrow[rightarrowtail]{d}{\epsilon_{e}\circ L\inc{n}\circ \tau_2} \\
\o{a} \arrow{r}{\gamma_q} & e
\end{tikzcd}
\ \ \ \ \ \ \ 
\begin{tikzcd}
\o{L P} \arrow[rightarrowtail]{r}{\nu} \arrow[rightarrowtail]{d}[swap]{\mu_1} & \o{L\Sg{n}} \arrow[rightarrowtail]{d}{\tau_1} \\
\o{a} \arrow[rightarrowtail]{r}{\sigma_1} & L Q' \arrow[ul, phantom, "\ulcorner", very near start]
\end{tikzcd}
\end{center}
Now, note that by~\ref{fact-syst} the top face of diagram~\eqref{eq:cube-of-embeddings}, displayed in the rightmost diagram above, is a pushout in $\E$.
By the universal property of pushouts, there is  a unique $\delta_q\colon LQ' \to e$ satisfying 
\[
\delta_q\circ \sigma_1=\gamma_q \ \text{ and } \ \delta_q\circ \tau_1= \epsilon_{e}\circ L\inc{n} \circ \tau_2.
\]

Define $\zeta_q\coloneqq (\delta_q)^\flat\colon Q'\to Re$ for all path embeddings $q\colon Q'\emb R(a +_{L P} L\Sg{n})$ above $\xi$. Further, if $q$ is below $\xi$, we let $\zeta_q$ be the obvious restriction of $\zeta_{\xi}$. 

\begin{lemma}
The following family of arrows forms a compatible cocone over the diagram of path embeddings into $\Sg{\xi}$:
\[
\{\zeta_q \mid q\colon Q'\emb R(a +_{L P} L\Sg{n}) \ \text{is a path embedding comparable with $\xi$}\}.
\] 
\end{lemma}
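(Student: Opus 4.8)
The plan is to verify the compatibility condition for the cocone: that $\zeta_{q_2}\circ{!}_{Q_1',Q_2'} = \zeta_{q_1}$ whenever $q_1\le q_2$ are path embeddings into $R(a +_{L P} L\Sg{n})$ both comparable with $\xi$, where $Q_i' = \dom(q_i)$ and $!_{Q_1',Q_2'}\colon Q_1'\emb Q_2'$ is the unique path embedding with $q_2\circ{!}_{Q_1',Q_2'} = q_1$. First I would dispose of the case $q_1 < \xi$, where $\zeta_{q_1}$ is by definition the restriction $\zeta_\xi\circ{!}_{Q_1',Q}$ along the unique path embedding into $Q = \dom(\xi)$; by uniqueness of embeddings between paths (Lemma~\ref{l:arboreal:properties}\ref{at-most-one-emb}) this reduces the required equality either to a triviality (when $q_2\le\xi$) or to the instance $q_1 = \xi$ of the general claim. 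Hence it suffices to treat $\xi\le q_1\le q_2$, where both $\zeta_{q_i} = (\delta_{q_i})^\flat$ arise from the general construction.

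In this case I would put $j\coloneqq L(!_{Q_1',Q_2'})\colon LQ_1'\to LQ_2'$. From the first part of~\ref{path-emb}, $q_1^\#$ and $q_2^\#$ are embeddings; since $q_2^\#\circ j = (q_2\circ{!}_{Q_1',Q_2'})^\# = q_1^\#$, Lemma~\ref{l:factorisation-properties}\ref{cancellation-m} shows that $j$ is an embedding. Because $(-)^\flat$ and $(-)^\#$ are mutually inverse and $(\zeta_{q_2}\circ{!}_{Q_1',Q_2'})^\# = (\zeta_{q_2})^\#\circ j = \delta_{q_2}\circ j$, the goal becomes $\delta_{q_2}\circ j = \delta_{q_1}$. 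Now $\delta_{q_i}$ is determined by the universal property of the pushout $LQ_i' = \o{a}_i +_{\o{L P}_i} \o{L\Sg{n}}_i$, the top face of the cube~\eqref{eq:cube-of-embeddings} attached to $q_i$ (whose vertices and arrows I decorate with a superscript $(i)$), via $\delta_{q_i}\circ\sigma_1^{(i)} = \gamma_{q_i}$ and $\delta_{q_i}\circ\tau_1^{(i)} = \epsilon_e\circ L\inc{n}\circ\tau_2^{(i)}$. Since $q_1^\# = q_2^\#\circ j$ and the four vertical faces of~\eqref{eq:cube-of-embeddings} are pullbacks, pasting of pullbacks produces comparison embeddings $j_a\colon\o{a}_1\emb\o{a}_2$ and $j_{\Sg{n}}\colon\o{L\Sg{n}}_1\emb\o{L\Sg{n}}_2$ compatible with the structure maps of the two cubes, so that $\sigma_1^{(2)}\circ j_a = j\circ\sigma_1^{(1)}$, $\sigma_2^{(2)}\circ j_a = \sigma_2^{(1)}$, $\tau_1^{(2)}\circ j_{\Sg{n}} = j\circ\tau_1^{(1)}$ and $\tau_2^{(2)}\circ j_{\Sg{n}} = \tau_2^{(1)}$. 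Testing $\delta_{q_2}\circ j = \delta_{q_1}$ against the two injections of the pushout $LQ_1'$, the $\tau_1^{(1)}$-component is immediate, since $\delta_{q_2}\circ j\circ\tau_1^{(1)} = \delta_{q_2}\circ\tau_1^{(2)}\circ j_{\Sg{n}} = \epsilon_e\circ L\inc{n}\circ\tau_2^{(2)}\circ j_{\Sg{n}} = \epsilon_e\circ L\inc{n}\circ\tau_2^{(1)}$, while the $\sigma_1^{(1)}$-component reduces, via $\delta_{q_2}\circ j\circ\sigma_1^{(1)} = \delta_{q_2}\circ\sigma_1^{(2)}\circ j_a = \gamma_{q_2}\circ j_a$, to the single equality $\gamma_{q_2}\circ j_a = \gamma_{q_1}$.

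The hard part will be this last equality, the one place where the path restriction property~\ref{path-restriction-prop} is genuinely used. Writing $\gamma_{q_i} = \epsilon_e\circ L(\inc{n}\circ f\circ\psi_{q_i})$, with $\psi_{q_i}\colon Q_{\o{a}_i}\to\Sg{m}$ the unique morphism satisfying $\inc{m}\circ\psi_{q_i} = (\sigma_2^{(i)})^\flat$ and $Q_{\o{a}_i}$ the path with $LQ_{\o{a}_i}\cong\o{a}_i$ supplied by the path restriction property applied to the smooth path $Q_i'$ and the embedding $\sigma_1^{(i)}\colon\o{a}_i\emb LQ_i'$, I would argue as follows. By pasting of pullbacks, $\o{a}_1$ is the pullback of $j = L(!_{Q_1',Q_2'})$ along $\sigma_1^{(2)}$, the induced embedding $\o{a}_1\emb\o{a}_2$ is precisely $j_a$, and the induced embedding $\o{a}_1\emb LQ_1'$ is precisely $\sigma_1^{(1)}$. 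Since $j$ is an embedding, item~\ref{path-re-2} of Definition~\ref{def:path-re}, applied to the smooth path $Q_2'$ and the path embedding $!_{Q_1',Q_2'}$, shows that $j_a$ is of the form $L\ell$ for some path morphism $\ell\colon Q_{\o{a}_1}\to Q_{\o{a}_2}$; the subtle point here is identifying the domain path produced by the lemma with $Q_{\o{a}_1}$, which works because the embedding $\o{a}_1\emb LQ_1'$ coincides with $\sigma_1^{(1)}$. Finally, both $\psi_{q_2}\circ\ell$ and $\psi_{q_1}$ are morphisms $Q_{\o{a}_1}\to\Sg{m}$ whose composite with $\inc{m}$ equals $(\sigma_2^{(1)})^\flat$ --- for $\psi_{q_2}\circ\ell$ this uses $\sigma_2^{(2)}\circ j_a = \sigma_2^{(1)}$ together with the identity $(h\circ L\ell)^\flat = h^\flat\circ\ell$, a consequence of naturality of the unit --- so $\psi_{q_2}\circ\ell = \psi_{q_1}$ by uniqueness, and applying $\epsilon_e\circ L(\inc{n}\circ f\circ(-))$ gives $\gamma_{q_2}\circ j_a = \gamma_{q_1}$, as required. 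Apart from this, the only real work is the pullback bookkeeping --- building the comparison maps between the two cubes and pinning down which path $\ell$ connects; everything else is formal play with the hom-set bijection and the universal properties of the pushouts and pullbacks involved.
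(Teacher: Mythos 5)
Your proof is correct and follows essentially the same route as the paper's: reduce compatibility to $\delta_{q_2}\circ L(!)=\delta_{q_1}$ via the hom-set bijection, verify it against the two pushout injections (the $\tau$-side being immediate), and settle the $\sigma$-side by producing $\ell$ from item~\ref{path-re-2} of the path restriction property and deducing $\psi_{q_1}=\psi_{q_2}\circ\ell$ from monicity of $\inc{m}$ and naturality of $(-)^\flat$. The only differences are cosmetic bookkeeping (two cubes with comparison maps instead of the paper's single pasted three-level diagram) and your explicit treatment of the below-$\xi$ case and of $L(!)$ being an embedding, which the paper leaves implicit.
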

\begin{proof}
Fix arbitrary path embeddings 
\[
q\colon Q'\emb R(a +_{L P} L\Sg{n}) \ \text{ and } \ q'\colon Q''\emb R(a +_{L P} L\Sg{n})
\] 
comparable with $\xi$.
The compatibility condition for the cocone states that $\zeta_q$ extends $\zeta_{q'}$ whenever $q\geq q'$. It suffices to settle the case where $\xi\leq q\leq q'$. Also, it is enough to show that $\delta_q$ extends $\delta_{q'}$, \ie $\delta_q \circ L{!} = \delta_{q'}$ where ${!}\colon Q''\emb Q'$ is the unique embedding. Just observe that $\delta_q \circ L{!} = \delta_{q'}$ entails
\[
\zeta_q\circ {!} = \delta_q^\flat \circ {!} = ((\delta_q^\flat \circ {!})^\#)^\flat = (\delta_q \circ L{!})^\flat = \delta_{q'}^\flat = \zeta_{q'}.
\]
Consider the following diagram all whose vertical faces are pullbacks and note that by~\ref{path-restriction-prop}, and more precisely by item~\ref{path-re-2} in Definition~\ref{def:path-re}, the pullback of $L!$ along $\sigma_1$ is of the form $L\ell$ for some arrow $\ell\colon Q_{\o{\o{a}}}\to  Q_{\o{a}}$. 
\[\begin{tikzcd}[row sep=1em, column sep=2.5em]
\o{\o{L P}} \arrow[rr,rightarrowtail,""] \arrow[dr,swap,rightarrowtail,""] \arrow[dd,rightarrowtail,"",swap] &&
\o{\o{L\Sg{n}}} \arrow[dd,rightarrowtail,"\o{\tau}_2",near end] \arrow[dr,rightarrowtail,"\o{\tau}_1"]  &&\\
& \o{\o{a}} \arrow[rr,crossing over,rightarrowtail,"\o{\sigma}_1", near start] &&
LQ'' \arrow[dd,rightarrowtail,"L{!}"] \arrow[dddr,"\delta_{q'}", bend left = 30] && \\
\o{L P} \arrow[rr,rightarrowtail,"\nu", near end] \arrow[dr,swap,rightarrowtail,"\mu_1"] \arrow[dd,rightarrowtail,"\mu_2",swap] &&
\o{L\Sg{n}} \arrow[dd,rightarrowtail,"\tau_2",near end] \arrow[dr,rightarrowtail,"\tau_1"] && \\
& \o{a} \arrow[rr,crossing over,rightarrowtail,"\sigma_1" near start] \arrow[uu,leftarrowtail,crossing over,"L\ell", near end] &&
LQ' \arrow[dr,"\delta_q"] && \\
LP \arrow[rr,rightarrowtail,"L\co{n}", near end] \arrow[dr,rightarrowtail,"m^\#",swap]  && L\Sg{n} \arrow[dr,rightarrowtail,"\lambda"]  & & e \\
& a \arrow[rr,rightarrowtail,"\iota"] \arrow[uu,leftarrowtail,crossing over,"\sigma_2", near end] & & a +_{L P} L\Sg{n} \arrow[uu,swap,leftarrowtail,"q^\#"] &&
\end{tikzcd}\]
In view of the definition of $\delta_{q'}$ in terms of the universal property of pushouts, it suffices to show that $\delta_q \circ L{!}$ satisfies 
\[
(\delta_q \circ L{!})\circ \o{\sigma}_1 = \gamma_{q'} \ \text{ and } \ (\delta_q \circ L{!}) \circ \o{\tau}_1= \epsilon_{e}\circ L\inc{n} \circ \tau_2\circ \o{\tau}_2.
\]
The latter equation follows at once from the identity $\delta_q \circ \tau_1 = \epsilon_{e} \circ L\inc{n} \circ \tau_2$. With regards to the former, we have
\[
(\delta_q \circ L{!})\circ \o{\sigma}_1 = \delta_q \circ \sigma_1 \circ L\ell = \gamma_q \circ L\ell.
\]
Thus it suffices to show that $\gamma_q \circ L\ell = \gamma_{q'}$, and this clearly follows if we prove that $\tilde{\gamma}_q \circ L\ell = \tilde{\gamma}_{q'}$. 
Recall that $\psi_{q'}$ is the unique morphism such that the composite
\[\begin{tikzcd}
Q_{\o{\o{a}}} \arrow{r}{\psi_{q'}} & \Sg{m} \arrow[rightarrowtail]{r}{\inc{m}} & Ra
\end{tikzcd}\]
coincides with $(\sigma_2 \circ L\ell)^\flat$. But
\[
(\sigma_2 \circ L\ell)^\flat = ((\sigma_2^\flat \circ \ell)^\#)^\flat = \sigma_2^\flat \circ \ell,
\]
so $\inc{m}\circ \psi_{q}\circ \ell = \sigma_2^\flat \circ \ell$ entails $\psi_{q'} = \psi_q \circ \ell$. Therefore,
\[
\tilde{\gamma}_{q'} = L(f\circ \psi_{q'}) = L(f\circ \psi_q \circ \ell) = \tilde{\gamma}_{q}\circ L\ell. \qedhere
\]
\end{proof}

The previous lemma entails the existence of a unique morphism $h\colon \Sg{\xi} \to Re$ satisfying $h\circ q = \zeta_q$ for all path embeddings $q$ into $R(a +_{L P} L\Sg{n})$ that are comparable with $\xi$. In order to conclude that $\co{\xi} \to \co{n'}$ as desired, we prove the following useful property of the cocone consisting of the morphisms $\zeta_q$.

\begin{lemma}
Let $G\coloneqq LR$ and consider the composite morphism
\[\begin{tikzcd}
RL\Sg{n} \arrow{r}{RL \inc{n}} & RGe \arrow{r}{R\epsilon_e} & Re.
\end{tikzcd}\] 
If $q= R\lambda\circ \alpha$ for some arrow $\alpha\colon Q'\emb RL\Sg{n}$, then $\zeta_q = R\epsilon_e \circ RL \inc{n} \circ \alpha$.
\end{lemma}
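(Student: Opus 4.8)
The plan is to compute the morphism $\delta_q$ of diagram~\eqref{eq:cube-of-embeddings} explicitly and then take its transpose. First I would unwind the hypothesis: applying $(-)^\flat$ and using naturality of the adjunction bijections, the factorisation $q = R\lambda \circ \alpha$ is equivalent to $q^\# = \lambda \circ \alpha^\#$, since $(\lambda \circ \alpha^\#)^\flat = R\lambda \circ (\alpha^\#)^\flat = R\lambda \circ \alpha = q$. The key structural point is then that, because $\lambda$ is a monomorphism (it is an embedding), the commutative square
\[\begin{tikzcd}
LQ' \arrow[rightarrowtail]{r}{\id_{LQ'}} \arrow[rightarrowtail]{d}[swap]{\alpha^\#} & LQ' \arrow[rightarrowtail]{d}{q^\#} \\
L\Sg{n} \arrow[rightarrowtail]{r}{\lambda} & a +_{L P} L\Sg{n}
\end{tikzcd}\]
is a pullback. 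Since the right-hand vertical face of the cube~\eqref{eq:cube-of-embeddings} presents $(\o{L\Sg{n}}, \tau_1, \tau_2)$ as \emph{the} pullback of $\lambda$ along $q^\#$, uniqueness of pullbacks forces $\tau_1 \colon \o{L\Sg{n}} \to LQ'$ to be an isomorphism with $\tau_2 = \alpha^\# \circ \tau_1$.

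Next I would substitute this into the definition of $\delta_q$. Recall that $\delta_q \colon LQ' \to e$ was defined, through the universal property of the pushout forming the top face of~\eqref{eq:cube-of-embeddings}, by the conditions $\delta_q \circ \sigma_1 = \gamma_q$ and $\delta_q \circ \tau_1 = \epsilon_e \circ L\inc{n} \circ \tau_2$. Using only the second condition together with $\tau_2 = \alpha^\# \circ \tau_1$ and the invertibility of $\tau_1$, I obtain $\delta_q = \epsilon_e \circ L\inc{n} \circ \alpha^\#$. It then remains to transpose: by the naturality identity $(f_1 \circ f_2)^\flat = R f_1 \circ f_2^\flat$, with $f_1 = \epsilon_e \circ L\inc{n} \colon L\Sg{n} \to e$ and $f_2 = \alpha^\# \colon LQ' \to L\Sg{n}$, and the fact that $(\alpha^\#)^\flat = \alpha$, I get
\[
\zeta_q = (\delta_q)^\flat = R(\epsilon_e \circ L\inc{n}) \circ (\alpha^\#)^\flat = R\epsilon_e \circ RL\inc{n} \circ \alpha,
\]
which is exactly the claimed equality. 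If instead $q$ lies below $\xi$, so that $\zeta_q$ is by definition the restriction of $\zeta_\xi$ along the unique path embedding $\dom(q) \emb Q$, one reduces to the case just treated: $\xi$ itself factors as $R\lambda \circ (L\co{n'})^\flat$, so the computation above yields the formula for $\zeta_\xi$, and composing with the path embedding gives the formula for $\zeta_q$.

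The step I expect to be the main obstacle is the middle one --- translating the abstract ``pullback face'' of the cube into the explicit data $\tau_1$ invertible, $\tau_2 = \alpha^\# \circ \tau_1$, which is precisely where the hypothesis $q = R\lambda \circ \alpha$ together with the monicity of $\lambda$ is used essentially. Once that is in place the argument is pure bookkeeping with the adjunction transposes; in particular, the compatibility equation $\delta_q \circ \sigma_1 = \gamma_q$ is not needed for this lemma, only the ``$\tau_1$-leg'' of the pushout characterisation of $\delta_q$.
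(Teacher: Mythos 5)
Your proof is correct, and it follows the same skeleton as the paper's: transpose the claim to the statement $\delta_q=\epsilon_e\circ L\inc{n}\circ\alpha^\#$, use $q^\#=\lambda\circ\alpha^\#$, and exploit the right-hand vertical face of the cube~\eqref{eq:cube-of-embeddings}. The difference is in how $\delta_q$ is pinned down. The paper verifies the candidate $\epsilon_e\circ L\inc{n}\circ\alpha^\#$ against \emph{both} defining equations of $\delta_q$: the $\tau_1$-leg (your condition) and the $\sigma_1$-leg, the latter requiring the identity $\tilde{\gamma}_q\circ\mu_1=L\co{n}\circ\mu_2$ from Lemma~\ref{l:gamma-tilde-triangle} together with the invertibility of $\mu_1$, which in turn invokes the axiom in~\ref{fact-syst} that pushout squares of embeddings are pullbacks. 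You instead observe that monicity of $\lambda$ makes the square with $\id_{LQ'}$ and $\alpha^\#$ a pullback of $\lambda$ along $q^\#$, so by uniqueness of pullbacks $\tau_1$ is invertible with $\tau_2=\alpha^\#\circ\tau_1$, and then the $\tau_1$-leg alone determines $\delta_q$; the $\sigma_1$-leg, $\gamma_q$, $\tilde{\gamma}_q$ and the extra appeal to~\ref{fact-syst} are bypassed entirely. This is a genuine (if modest) streamlining, and your explicit remark that the compatibility equation $\delta_q\circ\sigma_1=\gamma_q$ is not needed is accurate. Two small points: the paper also derives $\tau_1$ invertible, essentially by the same factorisation observation, so your ``main obstacle'' is really the shared step; and in your final paragraph on $q$ below $\xi$ (a case the paper never needs, since the lemma is only applied with $q=\xi$) you should note that $R\lambda$ is monic---$\lambda$ is an embedding and right adjoints preserve monomorphisms---to conclude $(L\co{n'})^\flat\circ{!}=\alpha$ from $\xi\circ{!}=q=R\lambda\circ\alpha$ before composing with the restriction.
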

\begin{proof}
Suppose that $q= R\lambda\circ \alpha$ for some $\alpha\colon Q'\emb RL\Sg{n}$. 
We have 
\[
(R\epsilon_e \circ RL \inc{n} \circ \alpha)^\# = ((\epsilon_e \circ L \inc{n} \circ \alpha^\#)^\flat)^\# = \epsilon_e \circ L \inc{n} \circ \alpha^\#.
\]
By the universal property of $\delta_q = \zeta_q^\#$, $\zeta_q = R\epsilon_e \circ RL \inc{n} \circ \alpha$ if, and only if,
\begin{equation}\label{eq:cond-1}
(\epsilon_e \circ L \inc{n} \circ \alpha^\#) \circ \tau_1 = \epsilon_{e}\circ L\inc{n} \circ \tau_2
\end{equation}
and 
\begin{equation}\label{eq:cond-2}
(\epsilon_e \circ L \inc{n} \circ \alpha^\#) \circ \sigma_1 = \epsilon_{e}\circ L\inc{n} \circ \tilde{\gamma}_q.
\end{equation}
Observe that $\alpha^\# \circ \tau_1 = \tau_2$ because
\[
\lambda\circ \alpha^\# \circ \tau_1 = q^\# \circ \tau_1 = \lambda \circ \tau_2
\]
and $\lambda$ is a monomorphism. Thus, equation~\eqref{eq:cond-1} holds.
Further, note that
\[
q^\# = (R\lambda\circ \alpha)^\# = ((\lambda\circ \alpha^\#)^\flat)^\# = \lambda\circ \alpha^\#
\]
and so $\tau_1$ in diagram~\eqref{eq:cube-of-embeddings} is an isomorphism. As pushout squares of embeddings in $\E$ are also pullbacks by~\ref{fact-syst}, $\mu_1$ in diagram~\eqref{eq:cube-of-embeddings} is also an isomorphism.
Therefore,
\begin{align*}
\lambda \circ \alpha^\# \circ \sigma_1 &= q^\# \circ \sigma_1 = q^\# \circ \sigma_1 \circ \mu_1 \circ \mu_1^{-1} = \lambda\circ L\co{n} \circ \mu_2 \circ \mu_1^{-1} \\
& = \lambda\circ \tilde{\gamma}_q \circ \mu_1 \circ \mu_1^{-1} = \lambda\circ \tilde{\gamma}_q
\end{align*}
and so $\alpha^\# \circ \sigma_1 = \tilde{\gamma}_q$. Equation~\eqref{eq:cond-2} then follows at once.
\end{proof}

Since $\xi = R\lambda \circ (L\co{n'})^\flat$, recalling that we identify $\co{n'}$ with a path embedding into $\Sg{n}$, an application of the previous lemma with $\alpha\coloneqq (L\co{n'})^\flat$ yields
\[
\zeta_{\xi} = R\epsilon_e \circ RL \inc{n} \circ (L\co{n'})^\flat = (\epsilon_e \circ L\inc{n} \circ L\co{n'})^\flat = (\epsilon_e \circ L n')^\flat = ((n')^\#)^\flat = n'.
\]
In other words, $h\circ \co{\xi}=n'$ and so $\co{\xi}\to n'$. It follows from Remark~\ref{rem:idempotent} that $\co{\xi} \to \co{n'}$, thus concluding the proof of Proposition~\ref{pr:relative-extension}. 

\section{Relativisations to classes of well-behaved structures}
\label{s:well-behaved}
%
In this section, we discuss a family of simple relativisation results which are, in a sense, oblivious to the ``tame'' versus ``not-so-tame'' divide of the landscape. Recall from Section~\ref{s:exploring-the-landscape} that, typically, the essence of a homomorphism preservation theorem is captured by the following configuration:
\[\begin{tikzcd}[column sep=0.1em]
a & {\eqaCk} & G_k a \arrow{rrrrrrrrrrrrrrr}{} &&&&& &&&&& &&&&&  b.
\end{tikzcd}\] 
The aim consists in proving that, if $a$ belongs to a full subcategory $\D$ of the extensional category $\E$, then so does $b$. The category $\D$ is assumed to be closed in $\E$ under morphisms, and saturated under the relation $\eqbCk$. 

The results presented in this paper are based on \emph{upgrading arguments} (see~\cite{Otto2011} for an overview of this method): the relation $\eqaCk$ between $a$ and $G_{k}a$ is upgraded to the finer relation $\eqbCk$, possibly between appropriate extensions $a^{*}$ and $(G_{k}a)^{*}$. However, if $a$ admits an arrow to $G_{k}a$, and thus also to $b$, then no upgrading is needed; in this case, the assumption that $\D$ be saturated under $\eqbCk$ is superfluous. The existence of a morphism $a\to G_{k}a$ is a strong requirement but, as we shall see below, it is satisfied in several cases of interest.

Fix a finite relational vocabulary $\sg$ and recall that every finite structure in $\CS$ admits a \emph{core}, denoted by $\core(A)$, which is unique up to isomorphism. This can be characterised as the minimal substructure of $A$ (with respect to set-theoretic inclusion) onto which $A$ retracts, and any two homomorphically equivalent finite $\sg$-structures have isomorphic cores; see \eg \cite{HN1992}. For the next lemma, consider any comonadic adjunction 
\[
L \dashv R\colon \CS\to \C.
\]
We let $G\coloneqq LR$ be the induced comonad, and denote by $L[\C]$ the full subcategory of $\CS$ on those objects that are in the image of the functor $L$.

\begin{lemma}\label{l:core}
If $L[\C]$ is closed in $\CS$ under induced substructures, then the following statements are equivalent for all $A\in \CS$:
\begin{enumerate}
\item\label{i:A-to-GA} There exists a homomorphism $A\to GA$.
\item\label{i:hom-equiv} $A$ is homomorphically equivalent to a structure $A'\in L[\C]$.
\end{enumerate}
Moreover, if $A$ is finite, the previous conditions are equivalent to the following:
\begin{enumerate}[resume]
\item\label{i:core} $\core(A)\in L[\C]$.
\end{enumerate}
\end{lemma}
\begin{proof}
\ref{i:A-to-GA} $\Rightarrow$ \ref{i:hom-equiv}. Suppose there is a homomorphism $A \to GA$, and decompose it as
\[
A \twoheadrightarrow A' \hookrightarrow GA
\]
where $A'$ is an induced substructure of $GA$. Composing the inclusion $A' \hookrightarrow GA$ with the counit $\epsilon\colon GA \to A$, we get a homomorphism $A'\to A$. It follows that $A$ and $A'$ are homomorphically equivalent. Note that $A'$ belongs to $L[\C]$ because it is an induced substructure of $GA$, which belongs to $L[\C]$.

\ref{i:hom-equiv} $\Rightarrow$ \ref{i:A-to-GA}. If $A$ is homomorphically equivalent to some $A'\in L[\C]$ then, by functoriality, $GA$ is homomorphically equivalent to $GA'$. Since the adjunction $L \dashv R$ is comonadic and $A'\in L[\C]$, there is a coalgebra map $A'\to GA'$. We thus obtain a diagram
\[\begin{tikzcd}
A \arrow[yshift=3pt]{r} \arrow[leftarrow, yshift=-3pt]{r} & A' \arrow{d} \\
GA \arrow[yshift=3pt]{r} \arrow[leftarrow, yshift=-3pt]{r} & GA'
\end{tikzcd}\]
and so $A\to GA$.

\ref{i:hom-equiv} $\Rightarrow$ \ref{i:core}. Suppose that $A$ is homomorphically equivalent to some $A'\in L[\C]$. Let $B$ be the image of $A$ under any homomorphism $A\to A'$ and note that $B$ is finite, because so is $A$, and it is homomorphically equivalent to $A$. Further, $B$ belongs to $L[\C]$ because the latter contains $A'$ and is closed under induced substructures. Thus,
\[
\core(A)\cong \core(B) \hookrightarrow B \in L[\C]
\]
entails $\core(A) \in L[\C]$.

\ref{i:core} $\Rightarrow$ \ref{i:hom-equiv}. Take $A'\coloneqq \core(A)$.
\end{proof}

Note that the equivalence between items~\ref{i:A-to-GA} and~\ref{i:hom-equiv} in Lemma~\ref{l:core} holds, more generally, if we replace the category of $\sg$-structures with any category $\E$ equipped with a class $\M$ of embeddings, assuming that $L[\C]$ is closed under taking $\M$-subobjects. For simplicity, we opted to state this lemma, as well as the following proposition, in the particular case where $\E=\CS$.

Consider any resource-indexed arboreal cover of $\CS$ by $\C$, with adjunctions ${L_{k} \dashv R_{k}}$, such that each category $L_{k}[\C_{k}]$ is closed in $\CS$ under induced substructures. We obtain the following abstract homomorphism preservation theorem relative to a class of structures $\E'$:
\begin{proposition}\label{p:abstract-HPT-wellbehaved}
 Let $\E'$ be a full subcategory of $\CS$, and suppose there is~$k$ such that every object of $\E'$ is homomorphically equivalent to an object of $L_{k}[\C_{k}]$. For any full subcategory $\D$ of $\CS$, $\D \cap \E'$ is closed under morphisms in $\E'$ precisely when it is upwards closed in $\E'$ with respect to $\prCk$.
\end{proposition}
\begin{proof}
For the non-trivial direction, assume $\D \cap \E'$ is closed under morphisms in~$\E'$. We must show that, for all $A, B\in \E'$ such that $A\prCk B$, if $A\in \D$ then also $B\in \D$. By Lemma~\ref{l:core}, there is a homomorphism $A\to G_{k}A$, where $G_k\coloneqq L_k R_k$. Further, there is a homomorphism $G_{k}A\to B$ because $A\prCk B$. The composite yields a homomorphism $A\to B$, and so $B\in \D$.
\end{proof}

Let us say that a class of $\sg$-structures has \emph{tree-depth at most} $k$ if each of its members has tree-depth at most $k$. Consider the Ehrenfeucht-\Fraisse~resource-indexed arboreal cover of $\CS$ by $\RT(\sg)$ from Example~\ref{ex:res-ind-arb-cover}. In view of \cite[Theorem~6.2]{AS2021}, the category $L_{k}[\RTk(\sg)]$ can be identified with the class of $\sg$-structures having tree-depth at most $k$. If $A$ is a $\sg$-structure of tree-depth at most $k$, then any induced substructure of $A$ has also tree-depth at most $k$; hence, $L_{k}[\RTk(\sg)]$ is closed in $\CS$ under induced substructures. Proposition~\ref{p:abstract-HPT-wellbehaved} then implies that, on any class of $\sg$-structures with tree-depth at most $k$, any first-order sentence that is preserved under homomorphisms is equivalent to an existential positive sentence of quantifier rank at most $k$.

\begin{remark}
It is enough to consider the Ehrenfeucht-\Fraisse~resource-indexed arboreal cover of $\CS$ by $\RT(\sg)$, rather than the resource-indexed arboreal adjunction between $\CS$ and $\RT(\sg^I)$ in Example~\ref{logical-rel-EF}, because every existential positive sentence (with quantifier rank at most $k$) is equivalent to one (with quantifier rank at most $k$) that does not use the equality symbol.
\end{remark}

In a similar fashion, we deduce the following result for classes of finite $\sg$-structures:
\begin{corollary}\label{cor:finite-HPT-bounded-tree-depth}
On any class of finite $\sg$-structures whose cores have tree-depth at most~$k$, every first-order sentence that is preserved under homomorphisms is also definable by an existential positive sentence of quantifier rank at most $k$.
\end{corollary}

Note that these are not equi-resource homomorphism preservation theorems, as the quantifier rank of the ensuing existential positive sentence is bounded by the tree-depth of the class of (cores of) structures under consideration, which may be larger than the quantifier rank of the first-order sentence we started with. 

On the other hand, since we did not assume in Proposition~\ref{p:abstract-HPT-wellbehaved} that $\D$ is saturated under $\eqbCk$, Corollary~\ref{cor:finite-HPT-bounded-tree-depth} holds, more generally, for all (not necessarily first-order definable) Boolean queries; see \eg \cite[\S 2.1]{Libkin2004} for the latter notion. Generalisations to non-Boolean queries are also available, by considering extensions of the Ehrenfeucht-\Fraisse~resource-indexed arboreal cover to categories of $n$-pointed $\sg$-structures.

\section*{Acknowledgment}
We are grateful to the anonymous referees for the numerous comments and suggestions that helped us to improve the presentation of our results.


\bibliographystyle{amsplain-nodash}

\begin{thebibliography}{10}

\bibitem{ABRAMSKY2020}
S.~Abramsky, \emph{Whither semantics?}, Theoretical Computer Science
  \textbf{807} (2020), 3--14.

\bibitem{emerging2022}
S.~Abramsky, \emph{{S}tructure and {P}ower: an emerging landscape}, Fundamenta
  Informaticae \textbf{186} (2022), no.~1--4, pp.~1--26, Special Issue in Honor
  of the Boris Trakhtenbrot Centenary.

\bibitem{abramsky2017pebbling}
S.~Abramsky, A.~Dawar, and P.~Wang, \emph{The pebbling comonad in finite model
  theory}, 32nd Annual {ACM/IEEE} Symposium on Logic in Computer Science
  (LICS), 2017, pp.~1--12.

\bibitem{Guarded2021}
S.~Abramsky and D.~Marsden, \emph{Comonadic semantics for guarded fragments},
  Proceedings of the 36th Annual {ACM/IEEE} Symposium on Logic in Computer
  Science, {LICS}, 2021.

\bibitem{Hybrid2022}
S.~Abramsky and D.~Marsden, \emph{Comonadic semantics for hybrid logic}, 47th
  International Symposium on Mathematical Foundations of Computer Science
  (MFCS), Leibniz International Proceedings in Informatics, vol. 241, Schloss
  Dagstuhl -- Leibniz-Zentrum f{\"u}r Informatik, 2022, pp.~7:1--7:14.

\bibitem{AR2021icalp}
S.~Abramsky and L.~Reggio, \emph{Arboreal categories and resources}, 48th
  International Colloquium on Automata, Languages, and Programming (ICALP
  2021), Leibniz International Proceedings in Informatics, vol. 198, Schloss
  Dagstuhl -- Leibniz-Zentrum f{\"u}r Informatik, 2021, pp.~115:1--115:20.

\bibitem{AR2022}
S.~Abramsky and L.~Reggio, \emph{Arboreal categories: An axiomatic theory of
  resources}, Logical Methods in Computer Science \textbf{19} (2023), no.~3,
  14:1--14:36.

\bibitem{DBLP:conf/csl/AbramskyS18}
S.~Abramsky and N.~Shah, \emph{Relating structure and power: Comonadic
  semantics for computational resources}, 27th {EACSL} Annual Conference on
  Computer Science Logic ({CSL}), 2018, pp.~2:1--2:17.

\bibitem{AS2021}
S.~Abramsky and N.~Shah, \emph{Relating structure and power: Comonadic
  semantics for computational resources}, Journal of Logic and Computation
  \textbf{31} (2021), no.~6, 1390--1428.

\bibitem{adamek2004abstract}
J.~Ad{\'a}mek, H.~Herrlich, and G.~E. Strecker, \emph{{Abstract and concrete
  categories. The joy of cats}}, Online edition, 2004.

\bibitem{AR1994}
J.~Ad\'{a}mek and J.~Rosick\'{y}, \emph{Locally presentable and accessible
  categories}, London Mathematical Society Lecture Note Series, vol. 189,
  Cambridge University Press, Cambridge, 1994.

\bibitem{HNvB1998}
H.~Andr{\'e}ka, J.~{\noopsort{Benthem}}{van Benthem}, and I.~N{\'e}meti,
  \emph{Modal languages and bounded fragments of predicate logic}, Journal of
  Philosophical Logic \textbf{27} (1998), no.~3, 217--274.

\bibitem{Barwise1977}
J.~Barwise, \emph{On {M}oschovakis closure ordinals}, Journal of Symbolic Logic
  \textbf{42} (1977), no.~2, 292--296.

\bibitem{beke2012abstract}
T.~Beke and J.~Rosick\'y, \emph{Abstract elementary classes and accessible
  categories}, Annals of Pure and Applied Logic \textbf{163} (2012), no.~12,
  2008--2017.

\bibitem{vBpieces}
J.~{\noopsort{Benthem}}{van Benthem}, \emph{Dynamic bits and pieces}, ILLC
  research report, University of Amsterdam, 1997.

\bibitem{blackburn2002modal}
P.~Blackburn, M.~De~Rijke, and Y.~Venema, \emph{Modal logic}, Cambridge Tracts
  in Theoretical Computer Science, vol.~53, Cambridge University Press, 2002.

\bibitem{conghaile2021game}
A.~{\noopsort{Conghaile}}{\'{O} Conghaile} and A.~Dawar, \emph{Game comonads
  {\&} generalised quantifiers}, 29th {EACSL} Annual Conference on Computer
  Science Logic, {CSL}, LIPIcs, vol. 183, Schloss Dagstuhl - Leibniz-Zentrum
  f{\"{u}}r Informatik, 2021, pp.~16:1--16:17.

\bibitem{DP2002}
B.~A. Davey and H.~A. Priestley, \emph{Introduction to lattices and order},
  second ed., Cambridge University Press, New York, 2002.

\bibitem{DJR2021}
A.~Dawar, T.~Jakl, and L.~Reggio, \emph{Lov{\'a}sz-type theorems and game
  comonads}, Proceedings of the 36th Annual {ACM/IEEE} Symposium on Logic in
  Computer Science, {LICS}, 2021.

\bibitem{deRijke2000}
M.~de~Rijke, \emph{A note on graded modal logic}, Studia Logica \textbf{64}
  (2000), no.~2, 271--283.

\bibitem{Ehrenfeucht1960}
A.~Ehrenfeucht, \emph{An application of games to the completeness problem for
  formalized theories}, Fund. Math. \textbf{49} (1960/61), 129--141.

\bibitem{Fraisse1954}
R.~Fra\"{\i}ss\'{e}, \emph{Sur quelques classifications des syst\`emes de
  relations}, Publ. Sci. Univ. Alger. S\'{e}r. A \textbf{1} (1954), 35--182.

\bibitem{freyd1972categories}
P.~J. Freyd and G.~M. Kelly, \emph{Categories of continuous functors, {I}},
  Journal of Pure and Applied Algebra \textbf{2} (1972), no.~3, 169--191.

\bibitem{HN1992}
P.~Hell and J.~Ne\v{s}et\v{r}il, \emph{The core of a graph}, Discrete
  Mathematics \textbf{109} (1992), no.~1, 117--126.

\bibitem{HM1980}
M.~Hennessy and R.~Milner, \emph{On observing nondeterminism and concurrency},
  Automata, Languages and Programming (Berlin, Heidelberg), Springer Berlin
  Heidelberg, 1980, pp.~299--309.

\bibitem{Hod93}
W.~Hodges, \emph{Model theory}, Encyclopedia of Mathematics and its
  Applications, vol.~42, Cambridge University Press, 1993.

\bibitem{Immerman1982}
N.~Immerman, \emph{Upper and lower bounds for first order expressibility},
  Journal of Computer and System Sciences \textbf{25} (1982), no.~1, 76--98.

\bibitem{FVM2022}
T.~Jakl, D.~Marsden, and N.~Shah, \emph{A game comonadic account of {C}ourcelle
  and {F}eferman-{V}aught-{M}ostowski theorems}, Preprint available at
  \url{https://arxiv.org/abs/2205.05387}, 2022.

\bibitem{JM1994}
A.~Joyal and I.~Moerdijk, \emph{A completeness theorem for open maps}, Ann.
  Pure Appl. Logic \textbf{70} (1994), no.~1, 51--86.

\bibitem{JNW1993}
A.~Joyal, M.~Nielsen, and G.~Winskel, \emph{Bisimulation and open maps},
  Proceedings of 8th Annual IEEE Symposium on Logic in Computer Science, 1993,
  pp.~418--427.

\bibitem{Adhesive2004}
S.~Lack and P.~Soboci{\'{n}}ski, \emph{Adhesive categories}, Foundations of
  Software Science and Computation Structures (Berlin, Heidelberg), Springer
  Berlin Heidelberg, 2004, pp.~273--288.

\bibitem{Libkin2004}
L.~Libkin, \emph{Elements of finite model theory}, Texts in Theoretical
  Computer Science. An EATCS Series, Springer, 2004.

\bibitem{Los1955}
J.~{\L}o{\'s}, \emph{On the extending of models. {I}}, Fund. Math. \textbf{42}
  (1955), 38--54.

\bibitem{Lyndon1959}
R.~C. Lyndon, \emph{Properties preserved under homomorphism}, Pacific J. Math.
  \textbf{9} (1959), 143--154.

\bibitem{MacLane}
S.~Mac~Lane, \emph{Categories for the working mathematician}, 2nd ed., GTM,
  vol.~5, Springer-Verlag, New York, 1998.

\bibitem{nevsetvril2006tree}
J.~Ne{\v{s}}et{\v{r}}il and P.~Ossona~de Mendez, \emph{Tree-depth, subgraph
  coloring and homomorphism bounds}, European Journal of Combinatorics
  \textbf{27} (2006), no.~6, 1022--1041.

\bibitem{Otto2011}
M.~Otto, \emph{Model theoretic methods for fragments of {FO} and special
  classes of (finite) structures}, Finite and Algorithmic Model Theory
  (J.~Esparza, C.~Michaux, and C.~Steinhorn, eds.), London Mathematical Society
  Lecture Note Series, vol. 379, Cambridge University Press, 2011,
  pp.~271--341.

\bibitem{Paine2020}
T.~Paine, \emph{A pebbling comonad for finite rank and variable logic, and an
  application to the equirank-variable homomorphism preservation theorem},
  Electronic Notes in Theoretical Computer Science \textbf{352} (2020),
  191--209.

\bibitem{Raney1952}
G.~N. Raney, \emph{Completely distributive complete lattices}, Proc. Amer.
  Math. Soc. \textbf{3} (1952), 677--680.

\bibitem{riehl2008factorization}
E.~Riehl, \emph{Factorization systems}, Notes available at
  \url{https://emilyriehl.github.io/files/factorization.pdf}.

\bibitem{Rossman2008}
B.~Rossman, \emph{Homomorphism preservation theorems}, J. ACM \textbf{55}
  (2008), no.~3, 15:1--15:53.

\bibitem{Tarski1955}
A.~Tarski, \emph{Contributions to the theory of models. {III}}, Nederl. Akad.
  Wetensch. Proc. Ser. A. \textbf{58} (1955), 56--64 = Indagationes Math. 17,
  56--64 (1955).

\bibitem{TZ2012}
K.~Tent and M.~Ziegler, \emph{A course in model theory}, Lecture Notes in
  Logic, vol.~40, Cambridge University Press, 2012.

\end{thebibliography}
\providecommand{\bysame}{\leavevmode\hbox to3em{\hrulefill}\thinspace}
\providecommand{\MR}{\relax\ifhmode\unskip\space\fi MR }
\providecommand{\MRhref}[2]{%
  \href{http://www.ams.org/mathscinet-getitem?mr=#1}{#2}
}
\providecommand{\href}[2]{#2}

\end{document}